\newcommand{\R}		{\mathbb{R}}
\newcommand{\C}		{\mathbb{C}}
\newcommand{\N}		{\mathbb{N}}
\newcommand{\diag}{\mathsf{diag}}
\newcommand{\supp}{\mathsf{supp}}
\newcommand{\im}{\mathsf{Im}}
\newcommand{\re}{\mathsf{Re}}
\renewcommand{\arg}{\mathsf{arg}}
\renewcommand{\det}{\mathsf{det}}
\renewcommand{\deg}{\mathsf{deg}}
\newcommand{\RS}{\boldsymbol{\mathfrak{R}}}
\newcommand{\z}	{{\boldsymbol z}}
\newcommand{\w}	{{\boldsymbol w}}
\newcommand{\x}	{{\boldsymbol x}}
\newcommand{\e}	{{\boldsymbol e}}
\newcommand{\ugamma}{\boldsymbol\gamma}
\newcommand{\rhy}   {\textnormal{RHP}-${\boldsymbol Y}$}
\newcommand{\rhx}   {\textnormal{RHP}-${\boldsymbol X}$}
\newcommand{\rhn}   {\textnormal{RHP}-${\boldsymbol N}$}
\newcommand{\rhp}   {\textnormal{RHP}-${\boldsymbol P}_e$}
\newcommand{\rhr}   {\textnormal{RHP}-${\boldsymbol Z}$}
\newcommand{\rhwpsiAB}   {\textnormal{RHP}-${\widetilde{\boldsymbol \Psi}_{\alpha,\beta}}$}
\newcommand{\rhhpsiAB}   {\textnormal{RHP}-${\widehat{\boldsymbol \Psi}_{\alpha,\beta}}$}
\newtheorem{theorem}{Theorem}
\newtheorem{proposition}[theorem]{Proposition}
\newtheorem{corollary}[theorem]{Corollary}
\newtheorem{lemma}[theorem]{Lemma}
\begin{document}

\title[Hermite-Pad\'e Approximants for Angelesco Systems]{Strong Asymptotics of Hermite-Pad\'e Approximants for Angelesco Systems}

\author[M. Yattselev]{Maxim L. Yattselev}

\address{Department of Mathematical Sciences, Indiana University-Purdue University Indianapolis, 402~North Blackford Street, Indianapolis, IN 46202}

\email{\href{mailto:maxyatts@math.iupui.edu}{maxyatts@math.iupui.edu}}

\subjclass[2000]{42C05, 41A20, 41A21}

\keywords{Hermite-Pad\'e approximation, multiple orthogonal polynomials, non-Hermitian orthogonality, strong asymptotics, matrix Riemann-Hilbert approach}

\begin{abstract}
In this work type II Hermite-Pad\'e approximants for a vector of Cauchy transforms of smooth Jacobi-type densities are considered. It is assumed that densities are supported on mutually disjoint intervals (an Angelesco system with complex weights). The formulae of strong asymptotics are derived for any ray sequence of multi-indices.
\end{abstract}

\maketitle

\setcounter{tocdepth}{2}
\tableofcontents

\section{Introduction}
\label{sec:intro}

Let $\vec f=(f_1,\ldots,f_p)$, $p\in\N$, be a vector of germs of holomorphic functions at infinity. Given a multi-index $\vec n\in\N^p$, \emph{Hermite-Pad\'e} approximant to $\vec f$ associated with $\vec n$, is a vector of rational functions
\begin{equation}
\label{HermitePade}
[\:\vec n\:]_{\vec f} := \left(P_{\vec n}^{(1)}/Q_{\vec n},\ldots,P_{\vec n}^{(p)}/Q_{\vec n}\right)
\end{equation}
such that
\begin{equation}
\label{linsys}
\left\{
\begin{array}{l}
\displaystyle \deg\big(Q_{\vec n}\big) = |\:\vec n\:| := n_1+\cdots+n_p, \medskip \\
\displaystyle R_{\vec n}^{(i)}(z) := \left(Q_{\vec n} f_i - P_{\vec n}^{(i)}\right)(z) = \mathcal{O}\left(z^{-(n_i+1)}\right) \quad \text{as} \quad z\to\infty, \quad i\in\{1,\ldots,p\}.
\end{array}
\right.
\end{equation}
It is quite simple to see that $[\:\vec n\:]_{\vec f} $ always exists since \eqref{linsys} can be rewritten as a linear system that has more unknowns than equations with coefficients coming from the Laurent expansions of $f_i$'s at infinity. Hence, $Q_{\vec n}$ is never identically zero and, in what follows, we normalize $Q_{\vec n}$ to be monic.

The vector $\vec f$ is called an \emph{Angelesco system} if
\begin{equation}
\label{angelesco}
f_i(z) = \int\frac{\mathrm{d}\sigma_i(t)}{t-z}, \qquad i\in\{1,\ldots,p\},
\end{equation}
where $\sigma_i$'s are positive measures on the real line with mutually disjoint convex hulls of their supports, i.e., $[a_j,b_j]\cap[a_k,b_k]=\varnothing$ for $j\neq k$ where $[a_i,b_i]$ is the smallest interval containing $\supp(\sigma_i)$.  Hermite-Pad\'e approximants to such systems were initially considered by Angelesco \cite{Ang19} and later by Nikishin \cite{Nik79,Nik80}. The beauty of system \eqref{angelesco} is that $Q_{\vec n}$, the denominator of $[\:\vec n\:]_{\vec f}$, turns out to be a multiple orthogonal polynomial satisfying
\begin{equation}
\label{ortho}
\int Q_{\vec n}(x)x^k\mathrm{d}\sigma_i(x) =0, \qquad k\in\{0,\ldots,n_i-1\}, \quad i\in\{1,\ldots,p\}.
\end{equation}

When $p=1$, Hermite-Pad\'e approximant $[\:\vec n\:]_{\vec f}$ specializes to the diagonal Pad\'e approximant, quite often denoted by $[n/n]_f$. It was shown by Markov \cite{Mar95} that if $f$ is of the form \eqref{angelesco} (now called a \emph{Markov function}), then $[n/n]_f$ converge to $f$ locally uniformly outside of $[a,b]$. Moreover, see \cite[Thm.~6.1.6]{StahlTotik}, it holds that
\begin{equation}
\label{weakPade}
\left\{
\begin{array}{l}
\displaystyle \lim_{n\to\infty} n^{-1}\log\big| f - [n/n]_f\big| \leq -2\big(\ell-V^\omega\big) \medskip \\
\displaystyle  \lim_{n\to\infty} n^{-1}\log|Q_n| = -V^\omega
\end{array}
\right.
\end{equation}
locally uniformly in $\overline\C\setminus[a,b]$, where $V^\omega(z):=-\int\log|z-t|\mathrm{d}\omega(t)$ is the \emph{logarithmic potential} of $\omega$, while the measure $\omega$ and the constant $\ell$ are the unique solutions of the min/max problem:
\begin{equation}
\label{min-max}
\ell := \min_{x\in[a,b]}V^\omega(x) ~= \max_{\nu\in M_1(a,b)}\min_{x\in[a,b]}V^\nu(x),
\end{equation}
where $M_c(a,b)$ is the collection of all positive Borel measures of mass $c$ supported on $[a,b]$. In fact, it also holds that $\omega$ is the \emph{equilibrium distribution} and $\ell$ is the \emph{Robin's constant} for the interval $[a,b]$. That is, $\omega$ is the unique measure on $[a,b]$ that solves the energy minimization problem:
\begin{equation}
\label{I}
I[\omega] = \min_{\nu\in M_1(a,b)} I[\nu], \quad \quad \ell=I[\omega],
\end{equation}
where $I[\nu]:=-\int\int\log|z-t|\mathrm{d}\nu(t)\mathrm{d}\nu(z)=\int V^\nu\mathrm{d}\nu$ is the \emph{logarithmic energy} of $\nu$ (for the notions of logarithmic potential theory we use \cite{Ransford} and \cite{SaffTotik} as primary references). 

It easily follows from \eqref{min-max}--\eqref{I} and properties of the superharmonic functions that
\begin{equation}
\label{EqProp}
\left\{
\begin{array}{l}
\ell - V^\omega \equiv 0 \quad \text{on} \quad [a,b], \medskip \\
\ell- V^\omega >0 \quad \text{in} \quad \overline\C\setminus[a,b].
\end{array}
\right.
\end{equation}
Hence, the diagonal Pad\'e approximants $[n/n]_f$ do indeed converge to $f$ locally uniformly in $\overline\C\setminus[a,b]$. Moreover, if $\sigma$ is a \emph{regular} measure in the sense of Stahl and Totik \cite[Sec.~3.1]{StahlTotik} (in particular, $\sigma^\prime>0$ almost everywhere on $[a,b]$ implies regularity), then the inequality in \eqref{weakPade} can be replaced by equality.

The above results were extended by Gonchar and Rakhmanov \cite{GRakh81} to Hermite-Pad\'e  approximants for Angelesco systems when multi-indices are such that
\begin{equation}
\label{multi-indices}
n_i = c_i|\:\vec n\:|+o\left(|\:\vec n\:|\right), \qquad \vec c=(c_1,\ldots,c_p)\in\big(0,1)^p, \quad |\:\vec c\:|=1,
\end{equation}
as $|\:\vec n\:|\to\infty$, and the measures $\sigma_i$ satisfy $\sigma_i^\prime>0$ almost everywhere on $[a_i,b_i]$, $i\in\{1,\ldots,p\}$. The formulae for the errors of approximation are similar in appearance to \eqref{weakPade} with measures coming not from a scalar but from a vector minimum energy problem. To describe it, define
\[
M_{\vec c}\big(\{a_i,b_i\}_1^p\big):=\big\{\vec\nu=(\nu_1,\ldots,\nu_p):~\nu_i\in M_{c_i}(a_i,b_i), ~i\in\{1,\ldots,p\}\big\}.
\]
Then it is known that there exists the unique vector of measures $\vec\omega\in M_{\vec c}\big(\{a_i,b_i\}_1^p\big)$ such that
\begin{equation}
\label{vecI}
I[\:\vec\omega\:] = \min_{\nu\in M_{\vec c}(\{a_i,b_i\}_1^p)} I[\:\vec\nu\:], \qquad I[\:\vec\nu\:] := \sum_{i=1}^p\bigg(2I[\nu_i] + \sum_{k\neq i}I[\nu_i,\nu_k]\bigg),
\end{equation}
where $I[\nu_i,\nu_k]:=-\int\int\log|z-t|\mathrm{d}\nu_i(t)\mathrm{d}\nu_k(z)$. The measures $\omega_i$ might no longer be supported on the whole intervals $[a_i,b_i]$ (the so-called \emph{pushing effect}), but in general it holds that
\begin{equation}
\label{supports}
\supp(\omega_i)=[a_{\vec c,i},b_{\vec c,i}] \subseteq [a_i,b_i], \qquad i\in\{1,\ldots,p\}.
\end{equation}
Let $W^{\vec\nu}$ be a function on $\bigcup_{i=1}^p[a_i,b_i]$ such that its restriction to $[a_i,b_i]$ is equal to $V^{\nu_i+\nu}$ where $\nu=\sum_{i=1}^p\nu_i$ is a probability measure such that $\nu_{|[a_i,b_i]}=\nu_i$. Exactly as in \eqref{min-max}, the equilibrium vector measure $\vec\omega$ can be characterized by the following property: if
\begin{equation}
\label{min-max-sys}
\min_{x\in[a_i,b_i]}W^{\vec\nu}(x) \geq \min_{x\in[a_i,b_i]}W^{\vec\omega}(x) =: \ell_i
\end{equation}
simultaneously for all $i\in\{1,\ldots,p\}$ for some $\vec\nu\in M_{\vec c}\big(\{a_i,b_i\}_1^p\big)$, then $\vec\nu=\vec\omega$.

Having all the definitions at hand, we can formulate the main result of \cite{GRakh81}, which states that
\begin{equation}
\label{weakHermitePade}
\left\{
\begin{array}{l}
\displaystyle \lim_{|\:\vec n\:|\to\infty} |\:\vec n\:|^{-1}\log\big| f_i - P_{\vec n}^{(i)}/Q_{\vec n}\big| = -\big(\ell_i-V^{\omega_i+\omega}\big), \quad i\in\{1,\ldots,p\}, \medskip \\
\displaystyle \lim_{|\:\vec n\:|\to\infty} |\:\vec n\:|^{-1}\log|Q_{\vec n}| = -V^\omega
\end{array}
\right.
\end{equation}
locally uniformly in $\overline\C\setminus\bigcup_{i=1}^p[a_i,b_i]$\footnote{\eqref{weakHermitePade} is consistent with \eqref{weakPade} when $p=1$, since in this case $I[\vec\nu]=2I[\nu_1]$, $\ell_1=2\ell$, and $V^{\omega_1+\omega}=2V^\omega$.}. Even though \eqref{weakHermitePade} looks exactly as \eqref{weakPade}, the convergence properties of the approximants are not as straightforward. Indeed, it is a direct consequence of the pushing effect ($[a_{\vec c,i},b_{\vec c,i}]\subsetneq[a_i,b_i]$), when it occurs, of course, that the first relation in \eqref{EqProp} is replaced now by 
\begin{equation}
\label{EqPropi}
\left\{
\begin{array}{l}
\ell_i - V^{\omega_i+\omega} \equiv 0 \quad \text{on} \quad [a_{\vec c,i},b_{\vec c,i}], \medskip \\
\ell_i- V^{\omega_i+\omega} < 0 \quad \text{on} \quad [a_i,b_i]\setminus[a_{\vec c,i},b_{\vec c,i}].
\end{array}
\right.
\end{equation}
Further, set
\begin{equation}
\label{Dipm}
\left\{
\begin{array}{lll}
D_i^+ &:=& \displaystyle \big\{z:~\ell_i-V^{\omega_i+\omega}(z)>0\big\}, \medskip \\
D_i^- &:=& \displaystyle \big\{z:~\ell_i-V^{\omega_i+\omega}(z)<0\big\}.
\end{array}
\right.
\end{equation}
\begin{figure}[!ht]
\centering
\includegraphics[scale=.55]{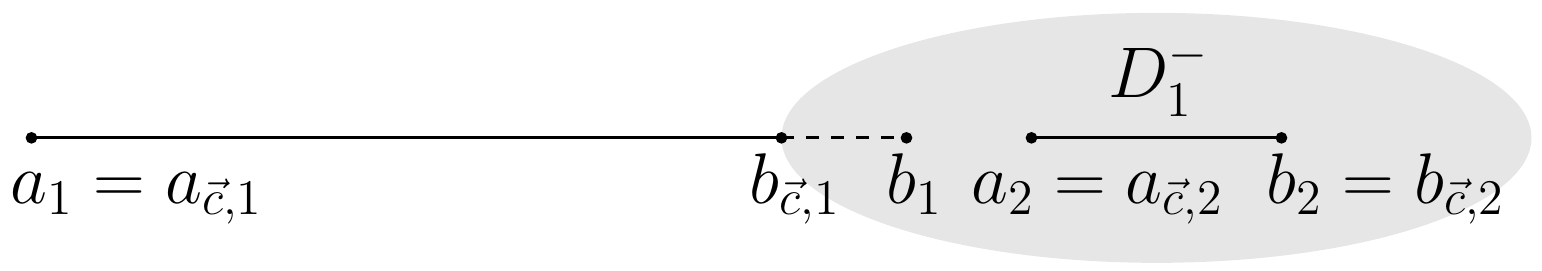}
\caption{\small Schematic representation of the pushing effect in the case of 2 intervals (in Proposition~\ref{prop:derivative} we shall show that this is the only possible situation for pushing effect in the case of 2 intervals; this is also explained in \cite{GRakh81}). The shaded region is the divergence domain $D_1^-$ while $D_2^-=\varnothing$.}
\label{fig:1}
\end{figure}
Properties of the logarithmic potentials immediately imply that $D_i^+$ is an unbounded domain. This is exactly the domain in which the approximants $P_{\vec n}^{(i)}/Q_{\vec n}$ converge to $f_i$ locally uniformly, while $D_i^-$ is a bounded open set on which the approximants diverge to infinity. This set can be empty or not. The latter situation necessarily happens when $[a_{\vec c,i},b_{\vec c,i}]\subsetneq[a_i,b_i]$ as can be clearly seen from the second line in \eqref{EqPropi}; however, the pushing effect is not necessary for the divergence set to exist.

The result of Gonchar and Rakhmanov \eqref{weakHermitePade} belongs to the realm of the so called \emph{weak asymptotics} as to distinguish from \emph{strong asymptotics} in which one establishes the existence of and identifies the limits
\begin{equation}
\label{strongHermitePade}
\left\{
\begin{array}{l}
\displaystyle \lim_{|\:\vec n\:|\to\infty}\left(\log\big| f_i - P_{\vec n}^{(i)}/Q_{\vec n}\big| + |\:\vec n\:|\big(\ell_i - V^{\omega_i+\omega}\big)\right), \medskip \\
\displaystyle \lim_{|\:\vec n\:|\to\infty}\big(\log|Q_{\vec n}|+|\:\vec n\:|V^\omega\big).
\end{array}
\right.
\end{equation}
Not surprisingly, the first result completely answering the previous question was obtained for Pad\'e approximants ($p=1$) by Szeg\H{o}. He proved that limit \eqref{strongHermitePade} takes place exactly when $\sigma^\prime$ satisfied $\int\log\sigma^\prime\mathrm{d}\omega>-\infty$, which is now known as \emph{Szeg\H{o} condition}\footnote{The word ``completely'' is slightly abused here as it was later realized by \cite{Ver36} that one can add any singular measure to $\sigma^\prime(t)\mathrm{d}t$, the absolutely continuous part, without changing \eqref{strongHermitePade}.}. The analog of the Szeg\H{o} theorem for true Hermite-Pad\'e approximants was proven by Aptekarev \cite{Ap88} when $p=2$ and the multi-indices are diagonal ($\vec n=(n,n)$) with indications how one could carry the approach to any $p>1$. A rigorous proof for any $p$ and diagonal multi-indices was completed by Aptekarev and Lysov \cite{ApLy10} for systems $\vec f$ of Markov functions generated by cyclic graphs (the so called \emph{generalized Nikishin systems}), of which Angelesco systems are a particular example. The restriction on the measures $\sigma_i$ is more stringent in \cite{ApLy10}, as it is required that
\begin{equation}
\label{rho}
\sigma_i^\prime(x) = h_i(x)(x-a_i)^{\alpha_i}(b_i-x)^{\beta_i}, \qquad \alpha_i,\beta_i>-1,
\end{equation}
 and $h_i$ is holomorphic and non-vanishing in some neighborhood of $[a_i,b_i]$. 
 
From the approximation theory point of view it is not natural to require the measures $\sigma_i$ to be positive (as well as to be supported on the real line, but we shall not dwell on this point here). In the case of Pad\'e approximants it was Nuttall \cite{Nut90} who proved the existence of and identified the limit in \eqref{strongHermitePade} for the set up \eqref{angelesco} and \eqref{rho} with $\alpha=\beta=-1/2$ and $h$ being H\"older continuous, non-vanishing, and complex-valued on $[a,b]$. The proof of Szeg\H{o} theorem for any parameters $\alpha,\beta>-1$ and $h$ complex-valued, holomorphic, and non-vanishing around $[a,b]$ follows from Aptekarev \cite{Ap02} (this result was not the main focus of \cite{Ap02}, there weighed approximation on one-arc S-contours was considered), and the condition of holomorphy of $h$ was relaxed by Baratchart and the author in \cite{BY10}, where $h$ is taken from a fractional Sobolev space that depends on the parameters $\alpha,\beta$ (again, the main focus of \cite{BY10} was weighted (multipoint) Pad\'e approximation on one-arc S-contours). The goal of this work is to extend the results of \cite{ApLy10} to Angelesco systems with complex weights and Hermite-Pad\'e approximants corresponding to multi-indices as in~\eqref{multi-indices}.

\section{Main Results}
\label{sec:MR}

From now on, we fix a system of mutually disjoint intervals $\big\{[a_i,b_i]\big\}_{i=1}^p$ and a vector $\vec c\in(0,1)^p$ such that $|\:\vec c\:|=1$. We further denote by
\[
\vec\omega=(\omega_1,\ldots,\omega_p), \quad \omega:=\sum_{i=1}^p\omega_i, \quad \supp(\omega_i)=\big[a_{\vec c,i},b_{\vec c,i}\big]\subseteq[a_i,b_i],
\]
the equilibrium vector measure minimizing the energy functional \eqref{vecI}.

To describe the forthcoming results we shall need a $(p+1)$-sheeted compact Riemann surface, say $\RS$, that we realize in the following way. Take $p+1$ copies of $\overline\C$. Cut one of them along the union $\bigcup_{i=1}^p\big[a_{\vec c,i},b_{\vec c,i}\big]$, which henceforth is denoted by $\RS^{(0)}$. Each of the remaining copies cut along only one interval $\big[a_{\vec c,i},b_{\vec c,i}\big]$ so that no two copies have the same cut and denote them by $\RS^{(i)}$. To form $\RS$, take $\RS^{(i)}$ and glue the banks of the cut $\big[a_{\vec c,i},b_{\vec c,i}\big]$ crosswise to the banks of the corresponding cut on $\RS^{(0)}$. 

It can be easily verified that thus constructed Riemann surface has genus 0. Denote by $\pi$ the natural projection from $\RS$ to $\overline\C$. We shall denote by $\z,\w,\x,\e$ generic points on $\RS$ with natural projections $z,w,x,e$. We also shall employ the notation $z^{(i)}$ for a point on $\RS^{(i)}$ with $\pi(z^{(i)})=z$. This notation is well defined everywhere outside of the cycles $\boldsymbol\Delta_i:=\RS^{(0)}\cap\RS^{(i)}$. Clearly,  $\pi(\boldsymbol\Delta_i)=\big[a_{\vec c,i},b_{\vec c,i}\big]$.  It also will be convenient to denote by $\boldsymbol a_{\vec c,i}$ and $\boldsymbol b_{\vec c,i}$ the branch points of $\RS$ with respective projections $a_{\vec c,i}$ and $b_{\vec c,i}$, $i\in\{1,\ldots,p\}$.

Unfortunately, to be able to handle general multi-indices of form \eqref{multi-indices}, one Riemann surface is not sufficient. Let $\vec n\in\N^p$. Denote by
\[
\vec\omega_{\vec n} = (\omega_{\vec n,1},\ldots,\omega_{\vec n,p}), \quad \omega_{\vec n}:=\sum_{i=1}^p\omega_{\vec n,i}, \quad \supp(\omega_{\vec n,i})=\big[a_{\vec n,i},b_{\vec n,i}\big]\subseteq[a_i,b_i],
\]
the equilibrium vector measure minimizing the energy functional \eqref{vecI} where $\vec c$ is replaced by the vector $\big(n_1/|\:\vec n\:|,\ldots,n_p/|\:\vec n\:|\big)$. The surface $\RS_{\vec n}$ is defined absolutely analogously to $\RS$. Notation $\boldsymbol\Delta_{\vec n,i}$, $\boldsymbol a_{\vec n,i}$, and $\boldsymbol b_{\vec n,i}$, $i\in\{1,\ldots,p\}$, is self-evident now.

Since each $\RS_{\vec n}$ has genus zero, one can arbitrarily prescribe zero/pole multisets of rational functions on $\RS_{\vec n}$ as long as the multisets have the same cardinality. Thus, given a multi-index $\vec n$, we shall denote $\Phi_{\vec n}$ a rational function on $\RS_{\vec n}$ which is non-zero and finite everywhere on $\RS_{\vec n}\setminus\bigcup_{k=0}^p\big\{\infty^{(k)}\big\}$, has a pole of order $|\:\vec n\:|$ at $\infty^{(0)}$, a zero of multiplicity $n_i$ at each $\infty^{(i)}$, and satisfies
\begin{equation}
\label{normalization}
\prod_{k=0}^p\Phi_{\vec n}\big(z^{(k)}\big) \equiv 1.
\end{equation}
Normalization \eqref{normalization} is possible since the function $\log\prod_{k=0}^p\big|\Phi_{\vec n}\big(z^{(k)}\big)\big|$ extends to a harmonic function on $\C$ which has a well defined limit at infinity. Hence, it is constant. Therefore, if \eqref{normalization} holds at one point, it holds throughout $\overline\C$. The importance of the function $\Phi_{\vec n}$ to our analysis lies in the following proposition.

\begin{proposition}
\label{prop:phi-vecn}
With the above notation, it holds that
\[
\frac1{|\:\vec n\:|}\log\big|\Phi_{\vec n}(\z)\big| = 
\left\{
\begin{array}{ll}
-V^{\omega_{\vec n}}(z) + \frac1{p+1}\sum_{k=1}^p\ell_{\vec n,k}, & \z\in\RS_{\vec n}^{(0)}, \medskip \\
V^{\omega_{\vec n,i}}(z) - \ell_{\vec n,i} + \frac1{p+1}\sum_{k=1}^p\ell_{\vec n,k}, & \z\in\RS_{\vec n}^{(i)}, \quad i\in\{1,\ldots,p\}.
\end{array}
\right.
\]
If a sequence $\big\{\vec n\big\}$ satisfies \eqref{multi-indices}, then the measures $\omega_{\vec n}$ converge to $\omega$ in the weak$^*$ topology of measures as $|\:\vec n\:|\to\infty$ (in particular, this implies that $\ell_{\vec n,i}\to\ell_i$, $a_{\vec n,i}\to a_{\vec c,i}$, and $b_{\vec n,i}\to b_{\vec c,i}$). Moreover, it holds that $V^{\omega_{\vec n,i}}\to V^{\omega_i}$ uniformly on compact subsets of $\C$ for each $i\in\{1,\ldots,p\}$.
\end{proposition}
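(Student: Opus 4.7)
\medskip\noindent\textbf{Plan of proof.}

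\emph{Explicit formula for $|\vec n|^{-1}\log|\Phi_{\vec n}|$.} Denote the right-hand side of the claimed identity by $u_{\vec n}(\z)$. The plan is to recognize the exponential of the analytic completion of $|\vec n|u_{\vec n}$ as a meromorphic function on $\RS_{\vec n}$ sharing divisor and normalization with $\Phi_{\vec n}$, and hence equal to it. The first step is continuity of $u_{\vec n}$ across each cycle $\boldsymbol\Delta_{\vec n,i}$, which follows from the Euler-Lagrange identity $V^{\omega_{\vec n,i}}+V^{\omega_{\vec n}}\equiv\ell_{\vec n,i}$ on $[a_{\vec n,i},b_{\vec n,i}]$ from \eqref{min-max-sys}. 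Moreover, since $\omega_{\vec n}\big|_{[a_{\vec n,i},b_{\vec n,i}]}=\omega_{\vec n,i}$ (the other components of $\vec\omega_{\vec n}$ are supported elsewhere), the Sokhotski-Plemelj normal-derivative jumps of $-V^{\omega_{\vec n}}$ (from the $\RS_{\vec n}^{(0)}$ side) and of $V^{\omega_{\vec n,i}}$ (from the $\RS_{\vec n}^{(i)}$ side) are compatible with the crosswise gluing, so $u_{\vec n}$ is in fact harmonic across $\boldsymbol\Delta_{\vec n,i}$, and therefore harmonic on all of $\RS_{\vec n}\setminus\{\infty^{(0)},\ldots,\infty^{(p)}\}$. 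Using that $\omega_{\vec n}$ has unit mass while $\omega_{\vec n,i}$ has mass $n_i/|\vec n|$, the asymptotics $|\vec n|u_{\vec n}(z^{(0)})=|\vec n|\log|z|+O(1)$ and $|\vec n|u_{\vec n}(z^{(i)})=-n_i\log|z|+O(1)$ match the required pole/zero structure, while $\sum_{k=0}^p u_{\vec n}(z^{(k)})\equiv 0$ matches \eqref{normalization}. Since $\RS_{\vec n}$ has genus zero it is simply connected, so a global harmonic conjugate $\tilde u_{\vec n}$ exists; the function $\exp\!\big(|\vec n|(u_{\vec n}+i\tilde u_{\vec n})\big)$ is then a rational function on $\RS_{\vec n}$ with the same divisor and normalization as $\Phi_{\vec n}$, hence equal to it.

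\emph{Weak$^*$ convergence $\vec\omega_{\vec n}\to\vec\omega$.} The family $\{\vec\omega_{\vec n}\}$ is weak$^*$ precompact, as each component lives in the fixed compact $[a_i,b_i]$ with mass $n_i/|\vec n|\leq 1$, and any weak$^*$ limit $\vec\omega^{\ast}$ lies in $M_{\vec c}(\{a_i,b_i\}_1^p)$ by \eqref{multi-indices}. To identify $\vec\omega^{\ast}$ I would use an energy comparison: set $\mu_{\vec n,i}:=(n_i/(c_i|\vec n|))\,\omega_i$; since $c_i>0$, $\vec\mu_{\vec n}\in M_{\vec c_{\vec n}}(\{a_i,b_i\}_1^p)$ with $\vec c_{\vec n}:=(n_1/|\vec n|,\ldots,n_p/|\vec n|)$, and $\vec\mu_{\vec n}\cws\vec\omega$ with $I[\vec\mu_{\vec n}]\to I[\vec\omega]$ by direct scaling of the quadratic form (using finiteness of $I[\vec\omega]$). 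Combining lower semicontinuity of $I$ with the minimality $I[\vec\omega_{\vec n}]\leq I[\vec\mu_{\vec n}]$ yields
\[
I[\vec\omega^{\ast}]\leq\liminf_{|\vec n|\to\infty}I[\vec\omega_{\vec n}]\leq\limsup_{|\vec n|\to\infty}I[\vec\omega_{\vec n}]\leq\lim_{|\vec n|\to\infty}I[\vec\mu_{\vec n}]=I[\vec\omega],
\]
and uniqueness of the minimizer in \eqref{vecI} forces $\vec\omega^{\ast}=\vec\omega$, so the whole sequence converges.

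\emph{Convergence of supports, constants, and potentials.} Applying the same energy-comparison argument to arbitrary perturbations of $\vec c$ yields continuous dependence of the minimizer on the mass vector, and in particular Hausdorff convergence of the supports, giving $a_{\vec n,i}\to a_{\vec c,i}$ and $b_{\vec n,i}\to b_{\vec c,i}$. The limit potentials $V^{\omega_i}$ are continuous on $\C$ (the equilibrium densities are bounded on their supports), which is precisely the hypothesis needed to upgrade weak$^*$ convergence of measures to locally uniform convergence of logarithmic potentials (see, e.g., \cite{SaffTotik}); this gives $V^{\omega_{\vec n,i}}\to V^{\omega_i}$ locally uniformly on $\C$. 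Passing to the limit in the Euler-Lagrange identity $V^{\omega_{\vec n,i}}(x)+V^{\omega_{\vec n}}(x)=\ell_{\vec n,i}$ at any interior point $x\in(a_{\vec c,i},b_{\vec c,i})$ then yields $\ell_{\vec n,i}\to\ell_i$. The step I expect to require the most care is the uniform convergence of potentials \emph{through} the supports: weak$^*$ convergence alone only delivers the principle-of-descent inequality, and the matching upper bound requires the finer continuity of $V^{\omega_i}$ together with a Dini-type argument on suitable regularizations; this is the only place where the argument must reach beyond pure minimization and semicontinuity.
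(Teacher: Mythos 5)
Your first two steps are, in substance, the paper's own argument. The identity for $|\:\vec n\:|^{-1}\log|\Phi_{\vec n}|$ is obtained there by showing that your $u_{\vec n}$ (the paper's $H_{\vec n}$) is harmonic on $\RS_{\vec n}$ away from the points over infinity — the same use of the equilibrium identity on the cuts — and then noting that $|\:\vec n\:|^{-1}\log|\Phi_{\vec n}|-H_{\vec n}$ is harmonic on the whole compact surface, hence constant, the constant being killed by \eqref{normalization}. Your variant (exponentiate $u_{\vec n}$ plus a harmonic conjugate and match divisors) works, but be careful: $\RS_{\vec n}$ punctured at the $p+1$ infinities is \emph{not} simply connected, so the conjugate is a priori multivalued and you must check that its periods are integer multiples of $2\pi$ after multiplication by $|\:\vec n\:|$ (they are, since the relevant masses are $1$ and $n_i/|\:\vec n\:|$); moreover divisor plus \eqref{normalization} pins your candidate down only up to a unimodular constant, which is harmless because only moduli are claimed. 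The weak$^*$ step via the rescaled measures $\mu_{\vec n,i}$, minimality, and (semi)continuity of the energy is exactly the paper's.

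The genuine gap is the last step. The claim that weak$^*$ convergence together with continuity of the limit potential upgrades to locally uniform convergence of $V^{\omega_{\vec n,i}}$ on compacta meeting the support is false as a general principle: take $\mu_n$ to be $n$ masses of weight $1/n$ placed at the points $k/n$ (or spread over intervals of width $e^{-n^2}$); then $\mu_n\cws\mu$, the uniform measure on $[0,1]$, whose potential is continuous, yet $V^{\mu_n}$ is unbounded (or enormous) near the atoms and does not converge uniformly on $[0,1]$. So continuity of $V^{\omega_i}$ is not ``precisely the hypothesis needed''; some uniform control of the approximating measures themselves is indispensable, and a Dini-type argument has no monotone sequence to act on. (Also, the parenthetical ``equilibrium densities are bounded'' fails at hard edges, where the density blows up like an inverse square root, although the potential is indeed continuous.) The paper supplies the missing control by exploiting the equilibrium structure on the Riemann surface: by the first part, $V^{\omega_{\vec n,i}}$ near the cut is, up to constants, the pull-back of the globally harmonic $H_{\vec n}$, hence continues harmonically across $(a_{\vec n,i},b_{\vec n,i})$; uniform bounds on a fixed Jordan curve surrounding all the intervals propagate by the maximum principle, Harnack/normal-family arguments give locally uniform convergence of the pull-backs away from the branch points, and neighborhoods of $a_{\vec c,i},b_{\vec c,i}$ are handled with the local square-root coordinate $z\mapsto(z+b_{\vec n,i})^2$. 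Without an argument of this kind (or some other uniform regularity of the densities of $\omega_{\vec n,i}$), the uniform convergence through the support — and with it your derivation of $\ell_{\vec n,i}\to\ell_i$, as well as the endpoint convergence $a_{\vec n,i}\to a_{\vec c,i}$, $b_{\vec n,i}\to b_{\vec c,i}$, which likewise does not follow from weak$^*$ convergence or abstract ``continuous dependence of the minimizer'' alone — remains unproved.
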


It immediately follows from Proposition~\ref{prop:phi-vecn} that
\begin{equation}
\label{Phi-ratio}
\frac1{|\:\vec n\:|}\log\left|\frac{\Phi_{\vec n}\big(z^{(i)}\big)}{\Phi_{\vec n}\big(z^{(0)}\big)}\right| = V^{\omega_{\vec n,i}+\omega_{\vec n}}(z) - \ell_{\vec n,i} = V^{\omega_i+\omega}(z) - \ell_i + o(1)
\end{equation}
uniformly on compact subsets of $\C$ as $|\:\vec n\:|\to\infty$ for each $i\in\{1,\ldots,p\}$.

The following corollary is an elementary consequence of Proposition~\ref{prop:phi-vecn}. It describes the assumption with which \eqref{multi-indices} often replaced when strong asymptotics is discussed (most often $\vec k=(1,\ldots,1)$).

\begin{corollary}
Let $\vec k\in\N^p$. If $\vec c = \big(k_1/|\:\vec k\:|, \ldots, k_p/|\:\vec k\:|\big)$ and $\vec n= n\vec k$, $n\in\N$, then $\vec\omega_{\vec n}=\vec\omega$ and $\Phi_{\vec n}=\Phi_{\vec k}^n$.
\end{corollary}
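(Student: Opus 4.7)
The plan is to observe that both conclusions are essentially uniqueness statements, and that only the direction of the index vector matters for either the equilibrium problem or the Riemann surface.

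First, I would establish that $\vec\omega_{\vec n}=\vec\omega$. The admissible class $M_{\vec d}\big(\{a_i,b_i\}_1^p\big)$ and the energy functional in \eqref{vecI} depend on the multi-index only through the normalized mass vector $\vec d$. Under the hypotheses $\vec c=(k_1/|\vec k|,\ldots,k_p/|\vec k|)$ and $\vec n=n\vec k$, the associated mass vector $(n_1/|\vec n|,\ldots,n_p/|\vec n|)$ equals $\vec c$, so $\vec\omega_{\vec n}$ and $\vec\omega$ are minimizers of the same problem. Uniqueness of the vector equilibrium measure then forces $\vec\omega_{\vec n}=\vec\omega$. In particular, the supports coincide, $[a_{\vec n,i},b_{\vec n,i}]=[a_{\vec c,i},b_{\vec c,i}]$, and consequently the sheet structure, the cycles $\bd_{\vec n,i}$, and the branch points of $\RS_{\vec n}$ agree with those of $\RS_{\vec k}$. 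Thus $\RS_{\vec n}$ and $\RS_{\vec k}$ are literally the same compact Riemann surface, and $\Phi_{\vec n}$ and $\Phi_{\vec k}^n$ can meaningfully be compared as rational functions on it.

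Next, I would verify that $\Phi_{\vec k}^n$ satisfies the defining properties of $\Phi_{\vec n}$. Being the $n$-th power of a rational function on $\RS_{\vec k}$ that is nonzero and finite away from the points $\infty^{(m)}$, the function $\Phi_{\vec k}^n$ shares this property. At $\infty^{(0)}$ it has a pole of order $n|\vec k|=|\vec n|$, and at each $\infty^{(i)}$ a zero of order $nk_i=n_i$, matching the prescribed divisor of $\Phi_{\vec n}$. Raising \eqref{normalization} for $\Phi_{\vec k}$ to the $n$-th power yields
\[
\prod_{m=0}^p\Phi_{\vec k}^n\bigl(z^{(m)}\bigr)=\Bigl(\prod_{m=0}^p\Phi_{\vec k}\bigl(z^{(m)}\bigr)\Bigr)^n\equiv 1,
\]
so the normalization corresponding to the index $\vec n$ is also satisfied.

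Finally, since $\RS_{\vec n}$ has genus zero, a rational function on it is determined up to a multiplicative constant by its zero/pole divisor, and the constant is fixed by the normalization \eqref{normalization}. Hence $\Phi_{\vec n}=\Phi_{\vec k}^n$. I do not anticipate any substantive obstacle: the argument is purely bookkeeping, the one fact one must invoke from outside the present excerpt being the uniqueness of the vector equilibrium measure, which is already cited in the discussion preceding \eqref{vecI}.
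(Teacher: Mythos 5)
Your argument is correct, and it is a complete write-up of what the paper leaves implicit: the corollary is stated there with no proof beyond the remark that it is an ``elementary consequence of Proposition~\ref{prop:phi-vecn}.'' Your first half (the energy problem \eqref{vecI} depends on $\vec n$ only through the normalized mass vector, which equals $\vec c$ when $\vec n=n\vec k$, so uniqueness of the minimizer gives $\vec\omega_{\vec n}=\vec\omega$ and hence $\RS_{\vec n}=\RS_{\vec k}$) is exactly the intended observation and does not even need Proposition~\ref{prop:phi-vecn}. For the second half you verify directly that $\Phi_{\vec k}^n$ has the divisor prescribed for $\Phi_{\vec n}$ and satisfies \eqref{normalization}, and then invoke genus zero; the route the paper is gesturing at instead reads off $|\:\vec n\:|^{-1}\log|\Phi_{\vec n}|$ from the potential-theoretic formula of Proposition~\ref{prop:phi-vecn}, concludes $|\Phi_{\vec n}|=|\Phi_{\vec k}|^n$ on the common surface, and then argues the ratio is a unimodular constant. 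Both are equally short; yours has the small advantage of not passing through the potentials at all. One caveat worth stating explicitly: the conditions defining $\Phi_{\vec n}$ (divisor plus \eqref{normalization}) pin it down only up to a multiplicative $(p+1)$-st root of unity, since replacing $\Phi_{\vec n}$ by $c\,\Phi_{\vec n}$ preserves \eqref{normalization} whenever $c^{p+1}=1$; so your closing sentence ``the constant is fixed by the normalization'' overstates slightly. This ambiguity is inherited from the paper's own definition, and what you actually prove --- that $\Phi_{\vec k}^n$ is an admissible choice of $\Phi_{\vec n}$ --- is precisely the content of the corollary; if one wants literal equality one simply makes the choices consistently (e.g., both real and positive at real points of $\RS^{(0)}$ near $\infty^{(0)}$).
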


Proposition~\ref{prop:phi-vecn} allows to recover $|\Phi_{\vec n}|$ via the vector equilibrium measure $\vec\omega_{\vec n}$. In order to do it for the function $\Phi_{\vec n}$ itself, let us define $h_{\vec n}$ on $\RS_{\vec n}$ by the rule
\begin{equation}
\label{h}
\left\{
\begin{array}{ll}
h_{\vec n}\big(z^{(0)}\big) := \displaystyle \int\frac{\mathrm{d}\omega_{\vec n}(x)}{z-x}, & z\in \C\setminus\bigcup_{i=1}^p\big[a_{\vec n,i},b_{\vec n,i}\big], \medskip \\
h_{\vec n}\big(z^{(i)}\big) := \displaystyle \int\frac{\mathrm{d}\omega_{\vec n,i}(x)}{x-z}, & z\in \C\setminus\big[a_{\vec n,i},b_{\vec n,i}\big], \quad i\in\{1,\ldots,p\}.
\end{array}
\right.
\end{equation}
We further define the function $h$ on $\RS$ exactly as in \eqref{h} with $\vec\omega_{\vec n}$ replaced by $\vec \omega$. For brevity, we also denote by $\boldsymbol\gamma_{\vec n,i}$ (resp. $\ugamma_i$) the Jordan arc belonging to $\RS^{(0)}_{\vec n}$ (resp. $\RS^{(0)}$) such that $\pi(\ugamma_{\vec n,i})=\big[b_{\vec n,i},a_{\vec n,i+1}\big]$ (resp. $\pi(\ugamma_i)=\big[b_{\vec c,i},a_{\vec c,i+1}\big]$), $i\in\{1,\ldots,p-1\}$.

\begin{proposition}
\label{prop:derivative}
The function $h_{\vec n}$ is a rational function on $\RS_{\vec n}$ that has a simple zero at each $\infty^{(k)}$, $k\in\{0,\ldots,p\}$, a single simple zero, say $\z_{\vec n,i}$, on each $\ugamma_{\vec n,i}$, $i\in\{1,\ldots,p-1\}$, a simple pole\footnote{Of course, if $\z_{\vec n,i}$ coincides with either $\boldsymbol b_{\vec n,i}$ or $\boldsymbol a_{\vec n,i+1}$, then it cancels the corresponding pole.} at each $\big\{\boldsymbol a_{\vec n,i},\boldsymbol b_{\vec n,i}\big\}_{i=1}^p$, and is otherwise non-vanishing and finite. Moreover,
\[
\z_{\vec n,i} = \boldsymbol b_{\vec n,i} \quad \Leftrightarrow \quad b_{\vec n,i}\in\partial D_{\vec n,i}^- \quad \text{and} \quad \z_{\vec n,i} = \boldsymbol a_{\vec n,i+1} \quad \Leftrightarrow \quad a_{\vec n,i+1}\in\partial D_{\vec n,i+1}^-,
\]
where the sets $D_{\vec n,i}^-$ are defined as in \eqref{Dipm}. Absolutely analogous claims hold for $h$, $\RS$, and $\ugamma_i$. Furthermore, it holds that
\begin{equation}
\label{PhiInt}
\Phi_{\vec n}(\z) = \exp\left\{|\:\vec n\:|\int^\z h_{\vec n}(\x)\mathrm{d}x\right\},
\end{equation}
where the initial bound for integration should be chosen so that \eqref{normalization} is satisfied. Finally, if we set $\RS_\delta$ to be $\RS$ with circular neighborhood of radius $\delta$ excised around each of its branch points, then $h_{\vec n}\to h$ uniformly on $\RS_\delta$ for each $\delta>0$, where $h_{\vec n}$ is carried over to $\RS_\delta$ with the help of natural projections.
\end{proposition}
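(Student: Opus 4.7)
The plan is to identify $h_{\vec n}$ with the normalized logarithmic derivative of $\Phi_{\vec n}$ and to read off the stated structural data from the known divisor of $\Phi_{\vec n}$ together with local expansions at the branch points and a real-axis sign analysis. First I would verify the identity $\tfrac{1}{|\:\vec n\:|}\Phi_{\vec n}^\prime(\z)/\Phi_{\vec n}(\z) = h_{\vec n}(\z)$ sheet by sheet. On $\RS_{\vec n}^{(0)}$, Proposition~\ref{prop:phi-vecn} gives $\tfrac{1}{|\:\vec n\:|}\log|\Phi_{\vec n}(z^{(0)})| = -V^{\omega_{\vec n}}(z)+\mathrm{const}$, which is $\re\int\log(z-x)\mathrm{d}\omega_{\vec n}(x)$ up to a constant; differentiating in $z$ yields $h_{\vec n}(z^{(0)})=\int\mathrm{d}\omega_{\vec n}(x)/(z-x)$, and the analogous computation on each $\RS_{\vec n}^{(i)}$ produces $h_{\vec n}(z^{(i)})$. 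Rationality of $h_{\vec n}$ on $\RS_{\vec n}$ is then immediate from rationality of $\Phi_{\vec n}$.

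Next I would extract the divisor. Expansions at infinity give $h_{\vec n}(z^{(0)})=z^{-1}+\mathcal{O}(z^{-2})$ and $h_{\vec n}(z^{(i)})=-(n_i/|\:\vec n\:|)z^{-1}+\mathcal{O}(z^{-2})$, a simple zero at each of the $p+1$ points $\infty^{(k)}$. At $\boldsymbol b_{\vec n,i}$, using the local coordinate $t$ with $z=b_{\vec n,i}-t^2$, the expansion $\Phi_{\vec n}=c_0+c_1 t+c_2 t^2+\cdots$ (with $c_0\ne 0$, since $\Phi_{\vec n}$ is holomorphic and non-vanishing there) gives $h_{\vec n}=(2|\:\vec n\:|)^{-1}(c_1+\mathcal{O}(t))/(-c_0 t+\mathcal{O}(t^2))$, so $h_{\vec n}$ has a simple pole at $\boldsymbol b_{\vec n,i}$ when $c_1\ne 0$ and is holomorphic (generically nonzero) there when $c_1=0$ --- the latter is precisely the ``cancellation'' scenario of the footnote. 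Because $\RS_{\vec n}$ has genus $0$, $h_{\vec n}$ has equally many zeros as poles, and in the generic case the $p+1$ infinity zeros plus $2p$ branch-point poles force $p-1$ additional zeros. These are located by sign analysis on $\RS_{\vec n}^{(0)}$: for real $z$ in the gap $(b_{\vec n,i},a_{\vec n,i+1})$, positivity of $\omega_{\vec n}$ together with the hard-edge density behavior at the branch points (generic case) forces $h_{\vec n}(z^{(0)})\to+\infty$ as $z\downarrow b_{\vec n,i}$ and $\to-\infty$ as $z\uparrow a_{\vec n,i+1}$, producing by continuity exactly one simple zero $\z_{\vec n,i}$ in each $\ugamma_{\vec n,i}$; should $\z_{\vec n,i}$ migrate to an endpoint branch point, the pole and the zero there cancel and the count remains balanced. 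On the sheets $\RS_{\vec n}^{(i)}$, $h_{\vec n}(z^{(i)})=\int\mathrm{d}\omega_{\vec n,i}(x)/(x-z)$ is sign-definite on each real component outside the cut, and the saturated global count combined with a conjugation-pairing argument rules out any complex zeros.

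The coincidence $\z_{\vec n,i}=\boldsymbol b_{\vec n,i}$ is equivalent to $c_1=0$ in the above expansion. By the Plemelj formulas applied to the Cauchy transform $h_{\vec n}(z^{(i)})$, $c_1$ is proportional to the coefficient of the leading $(b_{\vec n,i}-x)^{-1/2}$ singularity in $\mathrm{d}\omega_{\vec n,i}/\mathrm{d}x$ at $b_{\vec n,i}$, so $c_1\ne 0$ corresponds to the hard-edge regime and $c_1=0$ to the soft-edge regime in which the density vanishes like $(b_{\vec n,i}-x)^{1/2}$. Standard variational analysis of \eqref{vecI} together with \eqref{EqPropi} shows the soft edge appears exactly when the support has been strictly pushed in at that endpoint, which in turn happens iff $\ell_{\vec n,i}-V^{\omega_{\vec n,i}+\omega_{\vec n}}$ drops from $0$ on the support to strictly negative on the interval $(b_{\vec n,i},b_i)\subseteq[a_i,b_i]$, i.e.\ iff $b_{\vec n,i}\in\partial D_{\vec n,i}^-$; the argument at $\boldsymbol a_{\vec n,i+1}$ is symmetric.

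Formula \eqref{PhiInt} is obtained by integrating the identity from paragraph one: the residues of $|\:\vec n\:|h_{\vec n}\,\mathrm{d}z$ at $\{\infty^{(k)}\}$ are $-|\:\vec n\:|$ and $n_i$, both integers, and on the genus-$0$ surface every closed loop bounds, so the exponential kills the $2\pi i\Z$ period ambiguity and the initial point of integration is pinned down by \eqref{normalization}. Uniform convergence $h_{\vec n}\to h$ on $\RS_\delta$ follows from the uniform convergences $V^{\omega_{\vec n,i}}\to V^{\omega_i}$ and $V^{\omega_{\vec n}}\to V^\omega$ on compacts of $\C$ supplied by Proposition~\ref{prop:phi-vecn} together with Cauchy's formula applied on small circles inside $\RS_\delta$, legitimate since $\RS_\delta$ is uniformly bounded away from the moving branch points. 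The main obstacle I anticipate is the pushing-effect characterization in the third paragraph, which hinges on carefully connecting the hard/soft-edge dichotomy of the equilibrium density to the boundary behavior of $\ell_{\vec n,i}-V^{\omega_{\vec n,i}+\omega_{\vec n}}$ via the variational inequalities; the rest is essentially divisor bookkeeping on a genus-$0$ Riemann surface.
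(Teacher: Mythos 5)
Your identification $h_{\vec n}=|\:\vec n\:|^{-1}\Phi_{\vec n}^\prime/\Phi_{\vec n}$, the Puiseux expansions at the branch points giving at most simple poles, the genus-zero zero/pole count with one zero per gap located by monotonicity/sign considerations on $\ugamma_{\vec n,i}$, the integer-residue argument for \eqref{PhiInt}, and the convergence $h_{\vec n}\to h$ via the potential convergence of Proposition~\ref{prop:phi-vecn} all run essentially along the paper's lines. The genuine gap is in your third paragraph, i.e.\ in the proof of the equivalence $\z_{\vec n,i}=\boldsymbol b_{\vec n,i}\Leftrightarrow b_{\vec n,i}\in\partial D_{\vec n,i}^-$. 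You route it through the chain ``zero at the branch point $\Leftrightarrow$ soft edge $\Leftrightarrow$ the support is strictly pushed in at that endpoint $\Leftrightarrow$ $b_{\vec n,i}\in\partial D_{\vec n,i}^-$'', justified by an appeal to ``standard variational analysis''. The middle links are false: square-root vanishing of the density can occur at an endpoint that is not pushed in at all, $b_{\vec n,i}=b_i$, and in that critical configuration $D_{\vec n,i}^-$ touches $b_{\vec n,i}$ from \emph{outside} $[a_i,b_i]$ --- indeed, by the expansion \eqref{MapInt2} one gets $\ell_{\vec n,i}-V^{\omega_{\vec n,i}+\omega_{\vec n}}(x)<0$ for $x>b_{\vec n,i}=b_i$ close to $b_i$, so $b_i\in\partial D_{\vec n,i}^-$ with no pushing whatsoever. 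This is precisely the transitional situation the paper must accommodate (it is the raison d'\^etre of the ``Soft-Type Edge~II'' parametrix, where $e=b_{\vec c,i}=b_i$ and yet $e\in\partial D_i^-$), and it also shows that your test ``$\ell_{\vec n,i}-V^{\omega_{\vec n,i}+\omega_{\vec n}}$ drops to strictly negative on $(b_{\vec n,i},b_i)$'' characterizes membership of $b_{\vec n,i}$ in $\partial D_{\vec n,i}^-$ only through approach from inside $[a_i,b_i]$ and misses the touching from outside. As written, your argument establishes the implication driven by genuine pushing, not the stated biconditional.

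The repair is the paper's direct computation, which bypasses the notion of pushing entirely: since $h^{(0)}_{\vec n\pm}=h^{(i)}_{\vec n\mp}$ on the cut, one has the identity \eqref{MapInt}, $\ell_{\vec n,i}-V^{\omega_{\vec n,i}+\omega_{\vec n}}(x)=\int_{b_{\vec n,i}}^x\big(h^{(0)}_{\vec n}-h^{(i)}_{\vec n}\big)(y)\,\mathrm{d}y$, and the Puiseux expansion at $\boldsymbol b_{\vec n,i}$ then yields the sign on the spot. If the pole survives, $h^{(0)}_{\vec n}-h^{(i)}_{\vec n}=2c_{\vec n}(y-b_{\vec n,i})^{-1/2}+\mathcal{O}(1)$ with $c_{\vec n}>0$, so the left-hand side behaves like $4c_{\vec n}(x-b_{\vec n,i})^{1/2}>0$ and $b_{\vec n,i}\notin\partial D_{\vec n,i}^-$; if the pole is cancelled, then $h^{(0)}_{\vec n}-h^{(i)}_{\vec n}=-2c^\prime_{\vec n}(y-b_{\vec n,i})^{1/2}+\ldots$ with $c^\prime_{\vec n}>0$ (the sign coming from the monotonicity of $h^{(0)}_{\vec n}$ in the gap), so the left-hand side behaves like $-(4c^\prime_{\vec n}/3)(x-b_{\vec n,i})^{3/2}<0$ and $b_{\vec n,i}\in\partial D_{\vec n,i}^-$, by \eqref{Dipm}. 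Your Plemelj observation relating $c_1$ to the edge behaviour of the density is compatible with this, but the variational shortcut it feeds into does not close the argument; substituting the two-case sign computation above (and its mirror at $\boldsymbol a_{\vec n,i+1}$) makes the rest of your proof sound.
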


Thus, knowing the logarithmic derivative of $\Phi_{\vec n}$, we can recover the vector equilibrium measure $\vec\omega_{\vec n}$ by
\[
\mathrm{d}\omega_{\vec n}(x) =\left(h_{\vec n-}^{(0)}(x) - h_{\vec n+}^{(0)}(x)\right)\frac{\mathrm{d}x}{2\pi\mathrm{i}},
\]
as follows from Privalov's Lemma \cite[Sec. III.2]{Privalov} (the above formula does not allow to recover $\vec\omega_{\vec n}$ via a purely geometric construction of $\Phi_{\vec n}$ as one needs to know the intervals $\big[a_{\vec n,i},b_{\vec n,i}\big]$ to construct $\RS_{\vec n}$). We prove Propositions~\ref{prop:phi-vecn} and \ref{prop:derivative} in Section~\ref{sec:geometry}.

The purpose of the following proposition is to identify the limits in \eqref{strongHermitePade}, which are nothing but appropriate generalizations of the classical Szeg\H{o} function. In order to do that we need to specify the conditions we placed on the considered densities. In what follows, it is assumed that
\begin{equation}
\label{rho_rs}
\rho_i(x)=\rho_{\mathsf{r},i}(x)\rho_{\mathsf{s},i}(x),
\end{equation}
where $\rho_{\mathsf{r},i}$ is the regular part, that is, it is holomorphic and non-vanishing in some neighborhood of $[a_i,b_i]$, and $\rho_{\mathsf{s},i}$ is the singular part consisting of finitely many Fisher-Hartwig singularities \cite{DIKr11}, i.e.,
\begin{equation}
\label{rho_s}
\rho_{\mathsf{s},i}(x) = \prod_{j=0}^{J_i}|x-x_{ij}|^{\alpha_{ij}}\prod_{j=1}^{J_i}\left\{ \begin{array}{ll} 1, & x<x_{ij} \\ \beta_{ij}, &x>x_{ij} \end{array} \right\}
\end{equation}
where $a_i=x_{i0}<x_{i1}<\cdots<x_{iJ_i-1}<x_{iJ_i}=b_i$, $\alpha_{ij}>-1$, $\beta_{ij}\in\C\setminus(-\infty,0]$. In what follows, we adopt the following convention: given a function $F$ on $\RS$, we denote by $F^{(k)}$ its pull-back from $\RS^{(k)}\setminus\boldsymbol\Delta_k$, $k\in\{0,\ldots,p\}$. That is, $F^{(k)}(z):=F\big(z^{(k)}\big)$, $z\in\overline\C\setminus\big[a_{\vec c,i},b_{\vec c,i}\big]$.

\begin{proposition}
\label{prop:szego}
For each $i\in\{1,\ldots,p\}$, let $\rho_i$ be of the form \eqref{rho_rs}--\eqref{rho_s}. Further, let
\begin{equation}
\label{wi}
w_i(z):=\sqrt{(z-a_{\vec c,i})(z-b_{\vec c,i})}
\end{equation}
be the branch holomorphic outside of $\big[a_{\vec c,i},b_{\vec c,i}\big]$ normalized so that $w_i(z)/z\to1$ as $z\to\infty$. Then there exists the unique function $S$ non-vanishing and holomorphic in $\RS\setminus\bigcup_{i=1}^p\boldsymbol\Delta_i$ such that
\begin{equation}
\label{SrhoJumps}
S_\pm^{(i)} = S_\mp^{(0)}\big(\rho_iw_{i+}\big) \quad \text{on} \quad \big(a_{\vec c,i},b_{\vec c,i}\big)\setminus\{x_{ij}\}_{j=0}^{J_i},
\end{equation}
$i\in\{1,\ldots,p\}$, and that satisfies
\begin{equation}
\label{SrhoCorners}
\big|S^{(0)}(z)\big| \sim \big|S^{(i)}(z)\big|^{-1} \sim |z-e|^{-(2\alpha+1)/4} \quad \text{as} \quad z\to e\in\big\{a_{\vec c,i},b_{\vec c,i}\big\},
\end{equation}
$i\in\{1,\ldots,p\}$, where $\alpha=\alpha_{ij}$ if $e=x_{ij}$ and $\alpha=0$ otherwise;
\begin{multline}
\label{SrhoMiddles}
\big|S^{(0)}(z)\big| \sim \big|S^{(i)}(z)\big|^{-1} \sim |z-x_{ij}|^{-(\alpha_{ij}\pm\arg(\beta_{ij})/\pi)/2} \\ \text{as} \quad z\to x_{ij}\in\big(a_{\vec c,i},b_{\vec c,i}\big), \quad \pm\im(z)>0,
\end{multline}
$i\in\{1,\ldots,p\}$; and $\prod_{k=0}^pS^{(k)}(z)\equiv1$.
\end{proposition}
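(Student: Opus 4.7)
The plan is to recast Proposition~\ref{prop:szego} as a scalar additive Riemann--Hilbert problem on the genus-$0$ Riemann surface $\RS$ and solve it by a Cauchy-type integral on $\RS$.

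First, since $S$ is required non-vanishing and each sheet $\RS^{(k)}$ minus its own cut is simply connected, I take logarithms and set $L^{(k)}:=\log S^{(k)}$. The equivalent additive problem is: find $L$ holomorphic on $\RS\setminus\bigcup_i\boldsymbol\Delta_i$ with
\[
L^{(i)}_\pm-L^{(0)}_\mp=\log(\rho_iw_{i+})\quad\text{on }(a_{\vec c,i},b_{\vec c,i})\setminus\{x_{ij}\}_{j=0}^{J_i},
\]
together with $\sum_{k=0}^pL^{(k)}\equiv0$ on $\overline\C$ and prescribed logarithmic growth at branch and Fisher--Hartwig points encoding~\eqref{SrhoCorners}--\eqref{SrhoMiddles}. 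The two $\pm$ conditions above are really a single jump across the arc $\boldsymbol\Delta_i\subset\RS$, because the crosswise gluing identifies the upper bank of the cut in $\RS^{(0)}$ with the lower bank of the cut in $\RS^{(i)}$. Moreover, summing the $+$ and $-$ relations shows that $\sum_kL^{(k)}$ has no jump across any cut, so the normalization reduces to fixing a single additive constant.

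Second, because $\RS$ has genus $0$, there is a Cauchy kernel $\Omega_\z$ on $\RS$ --- the meromorphic $1$-form with simple poles of residues $+1$ at $\z$ and $-1$ at a fixed base point --- and the Plemelj--Sokhotski formula yields a candidate
\[
L(\z)=\frac{1}{2\pi\mathrm i}\sum_{i=1}^p\int_{\boldsymbol\Delta_i}\log(\rho_iw_{i+})(\pi(\x))\,\Omega_\z(\x)+\mathrm{const},
\]
where the branch of $\log(\rho_iw_{i+})$ on each subinterval between consecutive $x_{ij}$'s is fixed by the principal logarithm together with the step constants $\beta_{ij}$, and the additive constant is chosen so that $\sum_kL^{(k)}\equiv0$. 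Equivalently, one could pull back through a rational uniformizer $\zeta:\RS\to\overline\C$ and solve a standard scalar additive RH problem on $\overline\C$.

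Third, I verify the local behaviour and uniqueness. At a branch point $e\in\{a_{\vec c,i},b_{\vec c,i}\}$, possibly coinciding with a Fisher--Hartwig point of exponent $\alpha=\alpha_{ij}$ (else $\alpha=0$), the density $\log(\rho_iw_{i+})$ carries $(\alpha+\tfrac12)\log|x-e|$; the square-root structure of $\Omega_\z$ near the branch point (stemming from the local uniformizer $\sqrt{z-e}$ on $\RS$) converts this logarithmic divergence into the $|z-e|^{\pm(2\alpha+1)/4}$ power behaviour of~\eqref{SrhoCorners}, with opposite signs for $S^{(0)}$ and $S^{(i)}$. At an interior Fisher--Hartwig point $x_{ij}$, the factor $|x-x_{ij}|^{\alpha_{ij}}$ combined with the complex step $\log\beta_{ij}$ in the density produces via Plemelj--Sokhotski a complex power $(z-x_{ij})^{c}$ whose modulus yields the asymmetric exponent~\eqref{SrhoMiddles}, the chirality $\pm$ arising from the branch structure of this complex power across the real axis. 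Uniqueness follows by a Liouville argument on the compact surface $\RS$: the ratio of two solutions has matching jumps and matching singular bounds, hence extends holomorphically and without zeros to all of $\RS$, is therefore constant, and the common normalization pins the constant to $1$.

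The main obstacle is the local analysis of Step~3. Converting the logarithmic divergence of the density at branch points into the correct half-integer power $|z-e|^{\pm(2\alpha+1)/4}$ hinges delicately on the square-root branching of $\Omega_\z$ on $\RS$. At interior Fisher--Hartwig points with complex $\beta_{ij}$, producing the chirally asymmetric exponent $-(\alpha_{ij}\pm\arg(\beta_{ij})/\pi)/2$ with the sign depending on $\im(z)\gtrless 0$ requires careful synchronization of the branch of $\log(\rho_iw_{i+})$ on each subinterval with the branch of the emerging complex power --- this is where the complex-valued nature of $\beta_{ij}$ plays its most delicate role.
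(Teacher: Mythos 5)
Your proposal follows essentially the same route as the paper's proof: there the additive problem for $\log S$ is solved by the Cauchy-type integral \eqref{Lambda} built from the third-kind differentials \eqref{dC}, the jump \eqref{SrhoJumps} comes from the Plemelj/Zverovich relation \eqref{Lambda-traces}, and the local behaviour \eqref{SrhoCorners}--\eqref{SrhoMiddles} is extracted from Gakhov's theory after replacing the global kernel by the local model $\mathrm{d}\widetilde\Omega_\z$ near each $\boldsymbol\Delta_i$ --- precisely the step you flag as the main obstacle (the key point being that the density, symmetric across each branch point, yields pure logarithmic rather than $\log^2$ growth of $\log S$, so that $|S|$ has genuine power behaviour). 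The only cosmetic difference is your bipolar kernel with a fixed base point, which forces you to impose $\prod_{k=0}^pS^{(k)}\equiv1$ by adjusting an additive constant, whereas the paper's symmetrized kernel $\mathrm{d}C_\z$ builds this normalization in automatically at the price of the symmetry requirement $\lambda(\x)=\lambda(\x^*)$.
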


We prove Proposition~\ref{prop:szego} in Section~\ref{sec:szego}. Finally, we are ready to formulate our main result.

\begin{theorem}
\label{thm:SA}
Let $\vec f=\big(f_1,\ldots,f_p\big)$ be a vector of functions given by 
\begin{equation}
\label{f_rho}
f_i(z) = \frac1{2\pi\mathrm{i}}\int_{[a_i,b_i]}\frac{\rho_i(x)}{x-z}\:\mathrm{d}x, \qquad z\in\overline\C\setminus[a_i,b_i],
\end{equation}
for a system of mutually disjoint intervals $\big\{[a_i,b_i]\big\}_{i=1}^p$, where the functions $\rho_i$ are of the form \eqref{rho_rs}--\eqref{rho_s}, $i\in\{1,\ldots,p\}$. Given $\vec c\in(0,1)^p$ such that $|\:\vec c\:|=1$ and a sequence of multi-indices $\{\:\vec n\:\}$ satisfying \eqref{multi-indices}, let $[\:\vec n\:]_{\vec f}$ be the corresponding Hermite-Pad\'e approximant \eqref{HermitePade}--\eqref{linsys}. Then
\[
\left\{
\begin{array}{lll}
Q_{\vec n} &=& C_{\vec n}\big[1 + o(1)\big]\big(S\Phi_{\vec n}\big)^{(0)} \medskip \\
R_{\vec n}^{(i)} &=& C_{\vec n}\big[1 + o(1)\big]\big(S\Phi_{\vec n}\big)^{(i)}/w_i, \quad i\in\{1,\ldots,p\},
\end{array}
\right.
\]
locally uniformly in $\overline\C\setminus\bigcup_{i=1}^p[a_i,b_i]$, where the functions $\Phi_{\vec n}$ are as in Proposition~\ref{prop:phi-vecn}, the functions $S$ and $w_i$ are as in Proposition~\ref{prop:szego}, and $\lim_{z\to\infty}C_{\vec n}\big(S\Phi_{\vec n}\big)^{(0)}(z)z^{-|\:\vec n\:|}=1$. In particular, $\deg(Q_{\vec n})=|\:\vec n\:|$ for all $|\:\vec n\:|$ large enough.
\end{theorem}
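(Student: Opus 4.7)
The plan is to carry out a Deift--Zhou nonlinear steepest descent analysis of the $(p+1)\times(p+1)$ matrix Riemann--Hilbert problem (\rhy) whose first row consists of $Q_{\vec n}$ together with constant multiples of $R_{\vec n}^{(1)},\ldots,R_{\vec n}^{(p)}$ (the standard Fokas--Its--Kitaev-type reformulation of multiple orthogonality via the Cauchy transforms~\eqref{f_rho} and the orthogonality relations~\eqref{ortho}). The aim is to massage $\boldsymbol Y$ through a chain \rhy{} $\to$ \rhx{} $\to$ \rhn{} $\to$ \rhr{} (with \rhp{} parametrices interposed) into a small-norm problem and then read off the first-row asymptotics.

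Concretely, the steps I would carry out are as follows. (1) Normalize at infinity: set $\boldsymbol X := \boldsymbol C_{\vec n}\boldsymbol Y\,\diag\bigl(\Phi_{\vec n}^{(0)},\ldots,\Phi_{\vec n}^{(p)}\bigr)^{-1}$ with a constant diagonal $\boldsymbol C_{\vec n}$ chosen so that $\boldsymbol X(\infty)=I$. By Proposition~\ref{prop:phi-vecn} and \eqref{Phi-ratio}, the diagonal entries of the resulting jump on each $[a_{\vec c,i},b_{\vec c,i}]$ become purely oscillatory, while the off-diagonal-stabilizing entry on $[a_i,b_i]\setminus[a_{\vec c,i},b_{\vec c,i}]$ is exponentially small by \eqref{EqPropi}. (2) Open lenses around each $[a_{\vec c,i},b_{\vec c,i}]$ in the usual way, factoring the oscillatory jump into triangular pieces whose off-diagonal entries $(\Phi_{\vec n}^{(i)}/\Phi_{\vec n}^{(0)})^{\pm 1}$ decay exponentially on the lens edges. (3) Build the global parametrix $\boldsymbol N$ from the Szeg\H{o}-type function $S$ of Proposition~\ref{prop:szego}: its $(1,i+1)$ and $(i+1,1)$ entries carry the factors $w_i^{\mp 1}$, arranged so that the jump of $\boldsymbol N$ across $\boldsymbol\Delta_i$ absorbs exactly the residual $\rho_i w_{i+}$ coming from \eqref{SrhoJumps}. (4) Install local parametrices $\boldsymbol P_e$ on fixed-radius disks around each branch point $a_{\vec c,i},b_{\vec c,i}$ (Bessel-type, or Its--Krasovsky confluent-hypergeometric when a Fisher--Hartwig node sits at the endpoint) and around each interior Fisher--Hartwig singularity $x_{ij}\in(a_{\vec c,i},b_{\vec c,i})$ (Its--Krasovsky); the growth rates \eqref{SrhoCorners}--\eqref{SrhoMiddles} of $S$ are designed precisely so that $\boldsymbol P_e = \bigl(I + \mathcal{O}(|\:\vec n\:|^{-1})\bigr)\boldsymbol N$ on the circular boundaries. (5) The ratio $\boldsymbol Z$ of the post-lens matrix by $\boldsymbol N$ outside the disks, respectively by $\boldsymbol P_e$ inside them, solves a small-norm \rhr{} with jumps $I + \mathcal{O}(|\:\vec n\:|^{-1})$, hence $\boldsymbol Z = I + o(1)$ uniformly outside a fixed neighborhood of the cuts. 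Unwinding the transformations in the exterior region then yields $Q_{\vec n} = [\boldsymbol Y]_{1,1} = C_{\vec n}[1 + o(1)](S\Phi_{\vec n})^{(0)}$, and analogously for the $(1,i+1)$ entries, the extra $1/w_i$ factor arising from the $(i+1,i+1)$ entry of $\boldsymbol N$.

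The central difficulty is uniformity in $\vec n$ along the ray sequence~\eqref{multi-indices}. The surface $\RS_{\vec n}$, its branch points, the equilibrium measure, and consequently the global parametrix all drift with $\vec n$. Proposition~\ref{prop:derivative} is the essential input: the uniform convergence $h_{\vec n}\to h$ keeps branch points uniformly separated from one another and from the interior zeros $\z_{\vec n,i}$ (away from the degenerate configurations corresponding to transitions in or out of the pushing regime), so that lens widths and disk radii can be fixed independently of $\vec n$. A secondary technical issue is the possibility that a Fisher--Hartwig point $x_{ij}$ coincides with, or drifts toward, an endpoint $a_{\vec c,i}$ or $b_{\vec c,i}$; the endpoint parametrix must then combine the square-root hard-edge structure with the $(\alpha_{ij},\beta_{ij})$ data, and its matching with $\boldsymbol N$ is again governed by \eqref{SrhoCorners}--\eqref{SrhoMiddles}. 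Once these ingredients are in place, the small-norm argument closes and the theorem follows.
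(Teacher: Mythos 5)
Your outline reproduces the paper's overall skeleton (the chain \rhy{} $\to$ \rhx{} $\to$ global parametrix $\to$ local parametrices $\to$ small-norm problem \rhr{}, with the Szeg\H{o}-type function of Proposition~\ref{prop:szego} in the global parametrix), so the strategy is the right one. However, there is a genuine gap at the endpoints of the supports, and it is precisely the part of the argument that the paper has to work hardest for. You install only Bessel-type (or confluent-hypergeometric) parametrices at $a_{\vec c,i},b_{\vec c,i}$ with matching error $\mathcal{O}(|\:\vec n\:|^{-1})$, and you explicitly set aside the ``degenerate configurations corresponding to transitions in or out of the pushing regime.'' But the theorem is claimed for \emph{every} ray sequence \eqref{multi-indices}, and whenever an endpoint of the support lies on $\partial D_i^-$ (which, by Proposition~\ref{prop:derivative}, is exactly the case where the zero $\z_{\vec n,i}$ of $h_{\vec n}$ sits at, or drifts toward, the branch point -- in particular whenever the pushing effect occurs) the equilibrium density vanishes like a $3/2$-power there, the hard-edge (Bessel) model cannot match the global parametrix, and one needs instead the Painlev\'e~XXXIV-type model problem $\boldsymbol\Psi_{\alpha,\beta}(\cdot;s)$ with an $\vec n$-dependent parameter $s_{\vec n}=|\:\vec n\:|^{2/3}\zeta_{\vec n,e}(e)$ that can range over all of $\R$ and need not stay bounded. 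Its unique solvability for all real $s$ and, crucially, the asymptotics \eqref{betterestimate1}--\eqref{betterestimate2} \emph{uniform in $s$} (Theorem~\ref{thm:localRH}, proved in Section~\ref{sec:rhpsiAB}, including a vanishing lemma and separate large-$|s|$ analyses) are indispensable inputs that your proposal neither supplies nor cites; the resulting matching error is only of the order $\max\{|\zeta_{\vec n,e}(e)|^{1/2},|\:\vec n\:|^{-1/3}\}$, not $\mathcal{O}(|\:\vec n\:|^{-1})$, and there is a second soft-type regime (where $b_{\vec n,i}\notin\partial D_{\vec n,i}^-$ but $b_{\vec c,i}\in\partial D_i^-$) requiring the modified model $\widetilde{\boldsymbol\Psi}_{\alpha,0}$ and a nontrivial construction of the conformal map via \eqref{alg-eq}.

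Two further points in the same vein. First, because the supports $[a_{\vec n,i},b_{\vec n,i}]$ and hence the surfaces $\RS_{\vec n}$ move with $\vec n$, the global parametrix must be built from the $\vec n$-dependent objects $S_{\vec n}$, $\Upsilon_{\vec n,i}$, $w_{\vec n,i}$, $\Phi_{\vec n}$ on $\RS_{\vec n}$ -- a parametrix built from the limiting $S$ and $w_i$, with disks of fixed radius centered at $a_{\vec c,i},b_{\vec c,i}$ as you propose, does not have the same jump as $\boldsymbol X$ near the moving endpoints; one then needs the comparison estimates \eqref{local-un1}--\eqref{local-un2} of Section~\ref{sec:aux} to convert the answer back to $S$, $w_i$, $C_{\vec n}$ in the final statement. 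Second, your claim in step (1) that the jump on $[a_i,b_i]\setminus[a_{\vec c,i},b_{\vec c,i}]$ is uniformly exponentially small by \eqref{EqPropi} fails both near the support endpoints (where $\ell_i-V^{\omega_i+\omega}$ vanishes) and near singular points $x_{ij}$ with $\alpha_{ij}\le0$ that the pushing effect has expelled from the support (the set $E_\mathsf{out}$), where $\rho_i$ is unbounded; the latter require the additional Cauchy-integral parametrix \eqref{PeC5}. Without these ingredients the small-norm step does not close for general ray sequences, so the proposal as written proves the theorem only under an unstated nondegeneracy assumption rather than in the stated generality.
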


Theorem~\ref{thm:SA} is proved in Section~\ref{sec:RHA}. It follows immediately from \eqref{linsys}, \eqref{Dipm}, and \eqref{Phi-ratio} that
\[
f_i - \frac{P_{\vec n}^{(i)}}{Q_{\vec n}} = \frac{1+o(1)}{w_i}\frac{\big(S\Phi_{\vec n}\big)^{(i)}}{\big(S\Phi_{\vec n}\big)^{(0)}}
\]
is geometrically small locally uniformly in $D_i^+$ and is geometrically big locally uniformly in $D_i^-$ whenever the latter is non-empty.

\section{Riemann-Hilbert Approach}
\label{sec:RHP}

To prove Theorem~\ref{thm:SA} we use the extension to multiple orthogonal polynomials \cite{GerKVA01} of by now classical approach of Fokas, Its, and Kitaev \cite{FIK91,FIK92} connecting orthogonal polynomials to matrix Riemann-Hilbert problems. The RH problem is then analyzed via the non-linear steepest descent method of Deift and Zhou \cite{DZ93}.

The Riemann-Hilbert approach of Fokas, Its, and Kitaev lies in the following. Assume that the multi-index $\vec n=(n_1,\ldots,n_p)$ is such that
\begin{equation}
\label{assumption}
\deg(Q_{\vec n})=|\:\vec n\:| \quad \text{and} \quad R_{\vec n-\vec e_i}^{(i)}(z)\sim z^{-n_i} \quad \text{as} \quad z\to\infty, \quad i\in\{1,\ldots,p\},
\end{equation}
where all the entries of the vector $\vec e_i$ are zero except for the $i$-th one, which is 1. Set
\begin{equation}
\label{eq:y}
{\boldsymbol Y} := \left(\begin{array}{cccc}
Q_{\vec n} & R_{\vec n}^{(1)} & \cdots & R_{\vec n}^{(p)} \smallskip\\
m_{\vec n,1}Q_{\vec n-\vec e_1} & m_{\vec n,1}R_{\vec n-\vec e_1}^{(1)} & \cdots & m_{\vec n,1}R_{\vec n-\vec e_1}^{(p)} \\
\vdots & \vdots & \ddots & \vdots \\
m_{\vec n,p}Q_{\vec n-\vec e_p} & m_{\vec n,p}R_{\vec n-\vec e_p}^{(1)} & \cdots & m_{\vec n,p}R_{\vec n-\vec e_p}^{(p)}
\end{array}\right),
\end{equation}
where $m_{\vec n,i}$, $i\in\{1,\ldots,p\}$, is a constant such that
\[
\lim_{z\to\infty}m_{\vec n,i}R_{\vec n-\vec e_i}^{(i)}(z)z^{n_i}=1.
\]

To capture the block structure of many matrices appearing below, let us introduce transformations $\mathsf{T}_i$, $i\in\{1,\ldots,p\}$,  that act on $2\times2$ matrices:
\[
\mathsf{T}_i \left(\begin{matrix} e_{11} & e_{12} \\ e_{21} & e_{22} \end{matrix}\right) := e_{11}\boldsymbol E_{1,1} + e_{12} \boldsymbol E_{1,i+1} + e_{21}\boldsymbol E_{i+1,1} + e_{22} \boldsymbol E_{i+1,i+1} + \sum_{j\neq1, i+1} \boldsymbol E_{jj},
\] 
where $\boldsymbol E_{jk}$ is the matrix with all zero entries except for the $(j,k)$-th one which is 1. It can be easily checked that  $\mathsf{T}_i(\boldsymbol A\boldsymbol B)=\mathsf{T}_i(\boldsymbol A)\mathsf{T}_i(\boldsymbol B)$ for any $2\times2$ matrices $\boldsymbol A,\boldsymbol B$. 

The matrix-valued function $\boldsymbol{Y}$ solves the following Riemann-Hilbert problem (\rhy) :
\begin{itemize}
\label{rhy}
\item[(a)] ${\boldsymbol Y}$ is analytic in $\C\setminus\bigcup_{i=1}^p [a_i,b_i]$ and $\displaystyle \lim_{z\to\infty} {\boldsymbol Y}(z)z^{-\sigma(\vec n)} = {\boldsymbol I}$, where ${\boldsymbol I}$ is the identity matrix and $\displaystyle \sigma(\:\vec n\:) := \diag\left(|\:\vec n\:|, -n_1,\ldots,-n_p\right)$;
\item[(b)] ${\boldsymbol Y}$ has continuous traces on each $(a_i,b_i)$ that satisfy ${\boldsymbol Y}_+ = {\boldsymbol Y}_-\mathsf{T}_i\left(\begin{matrix} 1 & \rho_i \\ 0 & 1\end{matrix}\right)$;
\item[(c)] the entries of the $(i+1)$-st column of $\boldsymbol Y$ behave like $\mathcal{O}\left(\psi_{\alpha_{ij}}(z-x_{ij})\right)$ as $z\to x_{ij}$, $j\in\{0,\ldots,J_i\}$, while the remaining entries stay bounded, where
\[
\psi_\alpha(z) =
\left\{
\begin{array}{ll}
|z|^\alpha, & \mbox{if} \quad \alpha<0, \smallskip \\
\log|z|, & \mbox{if} \quad \alpha=0,\smallskip \\
1, & \mbox{if} \quad \alpha>0.
\end{array}
\right.
\]
\end{itemize}

The property \hyperref[rhy]{\rhy}(a) follows immediately from \eqref{linsys} and \eqref{assumption}. The property \hyperref[rhy]{\rhy}(b) is due to the equality
\[
R_{\vec n+}^{(i)} - R_{\vec n-}^{(i)} = Q_{\vec n}\left(f_{i+} - f_{i-}\right) = Q_{\vec n}\rho_i \quad \text{on} \quad (a_i,b_i),
\]
which in itself is a consequence of \eqref{linsys}, \eqref{f_rho}, and the Sokhotski-Plemelj formulae \cite[Section~4.2]{Gakhov}. Finally, \hyperref[rhy]{\rhy}(c) follows from the local analysis of Cauchy integrals in \cite[Section~8.1]{Gakhov}.

Conversely, if $\boldsymbol Y$ is a solution of \hyperref[rhy]{\rhy}, then it follows from \hyperref[rhy]{\rhy}(b) and the normalization at infinity in \hyperref[rhy]{\rhy}(a) that $[\boldsymbol Y]_{1,1}$ is a polynomial of degree exactly $|\:\vec n\:|$. It further follows from \hyperref[rhy]{\rhy}(b) that $[\boldsymbol Y]_{1,i+1}$, $i\in\{1,\ldots,p\}$, is holomorphic outside of $[a_i,b_i]$, vanishes at infinity with order $n_i+1$, and satisfies
\[
[\boldsymbol Y]_{1,i+1+} - [\boldsymbol Y]_{1,i+1-} = [\boldsymbol Y]_{1,1}\rho_i \quad \text{on} \quad (a_i,b_i).
\] 
Combining this with \hyperref[rhy]{\rhy}(c), we see that $[\boldsymbol Y]_{1,i+1}$ is the Cauchy integral of $[\boldsymbol Y]_{1,1}\rho_i$ on $[a_i,b_i]$. Furthermore, from the order of vanishing at infinity one can easily infer that $[\boldsymbol Y]_{1,1}(x)$ is orthogonal to $x^j$, $j\in\{0,\ldots,n_i-1\}$, with respect to $\rho_i(x)\mathrm{d}x$. Hence, $[\boldsymbol Y]_{1,1}=Q_{\vec n}$, $[\boldsymbol Y]_{1,i+1}=R_{\vec n}^{(i)}$, and \eqref{assumption} holds. Other rows of $\boldsymbol Y$ can be analyzed analogously. Altogether, the following proposition takes place.
\begin{proposition}
\label{prop:rhy}
If a solution of \hyperref[rhy]{\rhy} exists then it is unique. Moreover, in this case it is given by \eqref{eq:y} where $Q_{\vec n}$ and $R_{\vec n-\vec e_i}^{(i)}$ satisfy \eqref{assumption}. Conversely, if \eqref{assumption} is fulfilled, then \eqref{eq:y} solves \hyperref[rhy]{\rhy}.
\end{proposition}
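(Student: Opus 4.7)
The plan is to verify the three assertions in order: that $\boldsymbol Y$ defined by \eqref{eq:y} solves \hyperref[rhy]{\rhy} when \eqref{assumption} holds; that conversely any solution is necessarily of the form \eqref{eq:y} (which in particular forces \eqref{assumption}); and finally uniqueness. For the direct statement, all three RH properties follow essentially by inspection, as the paragraph preceding the proposition already outlines. The polynomial normalization built into \eqref{eq:y}, combined with the degree/decay condition \eqref{assumption}, yields \hyperref[rhy]{\rhy}(a); the Sokhotski--Plemelj identity $R_{\vec n +}^{(i)} - R_{\vec n -}^{(i)} = Q_{\vec n}\rho_i$, applied entry by entry and packaged by the block transformation $\mathsf{T}_i$, gives \hyperref[rhy]{\rhy}(b); and \hyperref[rhy]{\rhy}(c) reduces to the standard local analysis of Cauchy integrals against Fisher--Hartwig weights from \cite[Section~8.1]{Gakhov}.

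For the converse, I would proceed row by row. Fix a row index $k$. The entry $[\boldsymbol Y]_{k,1}$ has no jump across any $[a_i,b_i]$ by \hyperref[rhy]{\rhy}(b); its endpoint singularities prescribed by \hyperref[rhy]{\rhy}(c) are removable because $(z-x_{ij})\psi_\alpha(z-x_{ij})\to 0$ whenever $\alpha>-1$, so Riemann's removable singularity theorem applies; and \hyperref[rhy]{\rhy}(a) pins down its polynomial growth at infinity. Hence $[\boldsymbol Y]_{k,1}$ is a polynomial of the prescribed degree. Each off-diagonal entry $[\boldsymbol Y]_{k,i+1}$ is holomorphic off $[a_i,b_i]$, satisfies the scalar jump $[\boldsymbol Y]_{k,i+1+} - [\boldsymbol Y]_{k,i+1-} = [\boldsymbol Y]_{k,1}\rho_i$ with decay at infinity, and has endpoint behavior compatible with being a Cauchy transform; Plemelj's formula then identifies it with $\frac{1}{2\pi\mathrm{i}}\int_{[a_i,b_i]} \frac{[\boldsymbol Y]_{k,1}(x)\rho_i(x)}{x-z}\mathrm{d}x$. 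Matching the order of vanishing at infinity with the Laurent expansion of this Cauchy integral yields the orthogonality relations \eqref{ortho} defining the multiple orthogonal polynomial (with multi-index $\vec n$ when $k=1$ and $\vec n - \vec e_{k-1}$ when $k>1$), forcing simultaneously the block structure \eqref{eq:y} and the conditions \eqref{assumption}.

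Uniqueness then follows from the standard determinant argument. Each jump matrix is unit upper triangular modulo the block permutation encoded by $\mathsf{T}_i$, and therefore has determinant one, so $\det \boldsymbol Y$ has no jump across any cut. A cofactor expansion along the $(i+1)$-st column shows that $\det \boldsymbol Y$ inherits at worst a $\psi_{\alpha_{ij}}$ singularity at each Fisher--Hartwig point, which is removable by the same argument as above since $\alpha_{ij}>-1$; hence $\det \boldsymbol Y$ extends to an entire function that tends to $1$ at infinity by \hyperref[rhy]{\rhy}(a), and Liouville's theorem gives $\det \boldsymbol Y \equiv 1$. In particular $\boldsymbol Y$ is everywhere invertible, so if $\boldsymbol Y_1$ and $\boldsymbol Y_2$ are two solutions, $\boldsymbol Y_1 \boldsymbol Y_2^{-1}$ has no jumps across any cut, removable endpoint singularities, and tends to $\boldsymbol I$ at infinity; applying Liouville entry by entry yields $\boldsymbol Y_1 = \boldsymbol Y_2$.

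The only delicate point is the determinant step at Fisher--Hartwig endpoints with $-1 < \alpha_{ij} < 0$: one must check that the unbounded behavior allowed in a single column of $\boldsymbol Y$ does not upgrade to a non-integrable singularity of $\det \boldsymbol Y$. This is precisely what the column-specific wording of \hyperref[rhy]{\rhy}(c) secures, since the remaining columns stay bounded and the cofactor expansion along the singular column therefore preserves the $\psi_{\alpha_{ij}}$ rate, which is integrable and removable exactly in the range $\alpha_{ij}>-1$.
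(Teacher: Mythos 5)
Your proposal is correct, and its first two parts coincide with the paper's argument: the direct implication is verified exactly as in the paragraphs preceding the proposition (normalization plus \eqref{assumption} for \hyperref[rhy]{\rhy}(a), the Sokhotski--Plemelj identity $R_{\vec n+}^{(i)}-R_{\vec n-}^{(i)}=Q_{\vec n}\rho_i$ packaged by $\mathsf{T}_i$ for (b), and the local theory of Cauchy integrals for (c)), and the converse is the same row-by-row identification: each $[\boldsymbol Y]_{k,1}$ is a polynomial of the prescribed degree, each $[\boldsymbol Y]_{k,i+1}$ is the Cauchy transform of $[\boldsymbol Y]_{k,1}\rho_i$, and the decay at infinity yields the orthogonality relations \eqref{ortho}, hence \eqref{eq:y} and \eqref{assumption}. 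Where you diverge is the uniqueness step: the paper extracts uniqueness directly from this identification (every solution's entries are forced to be the normalized multiple orthogonal polynomials and their remainders), whereas you run the standard determinant argument --- $\det\boldsymbol Y\equiv1$ by Liouville after checking that the Fisher--Hartwig singularities of the single unbounded column only produce removable $\psi_{\alpha_{ij}}$-type singularities of the determinant, followed by the Liouville argument for $\boldsymbol Y_1\boldsymbol Y_2^{-1}$. That route is self-contained at the level of the Riemann--Hilbert problem and does not lean on any uniqueness statement for the (non-Hermitian) multiple orthogonality conditions themselves, at the cost of the extra cofactor bookkeeping at the points $x_{ij}$; note only that the same bookkeeping you did for $\det\boldsymbol Y$ must also be applied to $\boldsymbol Y_2^{-1}=\mathrm{adj}(\boldsymbol Y_2)$ when removing the singularities of $\boldsymbol Y_1\boldsymbol Y_2^{-1}$ (the product is again $\mathcal{O}(\psi_{\alpha_{ij}})$ because the unbounded column of $\boldsymbol Y_1$ pairs with the bounded row of the adjugate), a point you assert but do not spell out.
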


\section{Model Riemann-Hilbert Problems}
\label{sec:panleve}

As known, to analyze \hyperref[rhy]{\rhy} via steepest descent method of Deift and Zhou, one needs to construct local solutions around each singular point of the functions $\rho_i$ and the endpoints of the support of each component of the vector equilibrium measure, see Section~\ref{sec:Local}. In this section, we present all these model RH problems. In what follows we use the notation
\[
\sigma_3:=\left(\begin{matrix} 1 & 0 \\ 0 & -1 \end{matrix}\right).
\]

\subsection{Singular Points of the Weights}

In what follows, we always assume that the real line as well as its subintervals is oriented from left to right. Further, we set
\begin{equation}
\label{rays}
I_\pm:=\big\{z:~\arg(z)=\pm2\pi/3\big\}, \quad J_\pm:=\big\{z:~\arg(z)=\pm\pi/3\big\},
\end{equation}
where the rays $I_\pm$ are oriented towards the origin and the rays $J_\pm$ are oriented away from the origin. Put
\[
\Sigma(\boldsymbol\Phi_{\alpha,\beta}) := I_+\cup I_-\cup J_+\cup J_-\cup (-\infty,\infty)
\]
and consider the following Riemann-Hilbert problem: given $\alpha>-1$ and $\beta\in\C\setminus(-\infty,0]$, find a matrix-valued function $\boldsymbol\Phi_{\alpha,\beta}$ such that
\begin{itemize}
\label{rhphiAB}
\item[(a)] $\boldsymbol\Phi_{\alpha,\beta}$ is holomorphic in $\C\setminus\Sigma(\boldsymbol\Phi_{\alpha,\beta})$;
\item[(b)] $\boldsymbol\Phi_{\alpha,\beta}$ has continuous traces on $\Sigma(\boldsymbol\Phi_{\alpha,\beta})\setminus\{0\}$ that satisfy
\[
\boldsymbol\Phi_{\alpha,\beta+} = \boldsymbol\Phi_{\alpha,\beta-}
\left\{
\begin{array}{rll}
\left(\begin{matrix} 0 & 1 \\ -1 & 0 \end{matrix}\right) & \text{on} & (-\infty,0), \medskip \\
\left(\begin{matrix} 0 & \beta \\ -\beta^{-1} & 0 \end{matrix}\right) & \text{on} & (0,\infty),
\end{array}
\right.
\]
and
\[
\boldsymbol\Phi_{\alpha,\beta+} = \boldsymbol\Phi_{\alpha,\beta-}
\left\{
\begin{array}{rll}
\left(\begin{matrix} 1 & 0 \\ e^{\pm\alpha\pi\mathrm{i}} & 1 \end{matrix}\right) & \text{on} & I_\pm,  \medskip \\
\left(\begin{matrix} 1 & 0 \\ 1/\beta & 1 \end{matrix}\right) & \text{on} & J_\pm;
\end{array}
\right.
\]
\item[(c)] as $\zeta\to0$  it holds that
\[
\boldsymbol\Phi_{\alpha,\beta}(\zeta) = \mathcal{O}\left( \begin{matrix} |\zeta|^{\alpha/2} & |\zeta|^{\alpha/2} + |\zeta|^{-\alpha/2} \\ |\zeta|^{\alpha/2} & |\zeta|^{\alpha/2} + |\zeta|^{-\alpha/2} \end{matrix} \right) \quad \text{and} \quad \boldsymbol\Phi_{\alpha,\beta}(\zeta) = \mathcal{O}\left( \begin{matrix} 1 & \log|\zeta| \\ 1 & \log|\zeta| \end{matrix} \right) 
\]
when $\alpha\neq 0$ and $\alpha=0$, respectively;
\item[(d)] $\boldsymbol\Phi_{\alpha,\beta}$ has the following behavior near $\infty$:
\[
\boldsymbol\Phi_{\alpha,\beta}(\zeta) = \left(\boldsymbol I+\mathcal{O}\left(\zeta^{-1}\right)\right) \big(\mathrm{i}\zeta\big)^{\log\beta\sigma_3/2\pi\mathrm{i}} \boldsymbol B_\pm \exp\big\{\mp\mathrm{i}\zeta\sigma_3/2\big\}, \quad \pm\im(\zeta)>0,
\]
uniformly in $\C\setminus\Sigma(\boldsymbol\Phi_{\alpha,\beta})$, where $\big(\mathrm{i}\zeta\big)^{\log\beta/2\pi\mathrm{i}}$ has a branch cut along $(0,\infty)$ (observe also that $\big(\mathrm{i}\zeta\big)_-^{\log\beta/2\pi\mathrm{i}}=\beta\big(\mathrm{i}\zeta\big)_+^{\log\beta/2\pi\mathrm{i}}$ on $(0,\infty)$) and 
\[
\boldsymbol B_+ := \left(\begin{matrix} \beta^{-1/2} & 0 \smallskip \\ 0 & e^{-\alpha\pi\mathrm{i}/2} \end{matrix}\right) \beta^{\sigma_3}e^{\alpha\pi\mathrm{i}\sigma_3}, \quad \boldsymbol B_- := \boldsymbol B_+\left(\begin{matrix} 0 & -1 \smallskip \\ 1 & 0 \end{matrix}\right).
\]
\end{itemize}

The solution of \hyperref[rhphiAB]{\rhphiAB} can be written explicitly with the help of confluent hypergeometric functions. It was done first in \cite{Van03} for the case $\beta=1$, then in \cite{FMMFSou10,FMMFSou11} for $\beta\in(0,\infty)$, and, in \cite{DIKr11} for $\alpha\pm\log \beta/\pi\mathrm{i} \not\in\{-2,-4,\ldots\}$  (of course, in all the cases $\alpha>-1$; parameters $\alpha_j$ and $\beta_j$ in \cite{DIKr11} correspond to $\alpha/2$ and $\mathrm{i}\log\beta/2\pi$ above). To be more precise, one needs to take $\boldsymbol\Phi_{\alpha,\beta}\beta^{\sigma_3/4}$ multiply it by $e^{-\alpha\pi\mathrm{i}\sigma_3/2}$ in the first quadrant, by $e^{\alpha\pi\mathrm{i}\sigma_3/2}$ in the fourth quadrant, and then rotate the whole picture by $\pi/2$ to get the corresponding problem in~\cite{DIKr11}.

\subsection{Hard Edge} 

Given $\alpha>-1$, find a matrix-valued function $\boldsymbol\Psi_\alpha$ such that 
\begin{itemize}
\label{rhpsiA}
\item[(a)] $\boldsymbol\Psi_\alpha$ is holomorphic in $\C\setminus\big(I_+\cup I_-\cup(-\infty,0]\big)$;
\item[(b)] $\boldsymbol\Psi_\alpha$ has continuous traces on $I_+\cup I_-\cup(-\infty,0)$ that satisfy
\[
\boldsymbol\Psi_{\alpha+} = \boldsymbol\Psi_{\alpha-}
\left\{
\begin{array}{rll}
\left(\begin{matrix} 0 & 1 \\ -1 & 0 \end{matrix}\right) & \text{on} & (-\infty,0), \medskip \\
\left(\begin{matrix} 1 & 0 \\ e^{\pm\pi\mathrm{i}\alpha} & 1 \end{matrix}\right) & \text{on} & I_\pm;
\end{array}
\right.
\]
\item[(c)] as $\zeta\to0$  it holds that
\[
\boldsymbol\Psi_\alpha(\zeta) = \mathcal{O}\left( \begin{matrix} |\zeta|^{\alpha/2} & |\zeta|^{\alpha/2} \\ |\zeta|^{\alpha/2} & |\zeta|^{\alpha/2} \end{matrix} \right) \quad \text{and} \quad \boldsymbol\Psi_\alpha(\zeta) = \mathcal{O}\left( \begin{matrix} \log|\zeta| & \log|\zeta| \\ \log|\zeta| & \log|\zeta| \end{matrix} \right) 
\]
when $\alpha<0$ and $\alpha=0$, respectively, and
\[
\boldsymbol\Psi_\alpha(\zeta) = \mathcal{O}\left( \begin{matrix} |\zeta|^{\alpha/2} & |\zeta|^{-\alpha/2} \\ |\zeta|^{\alpha/2} & |\zeta|^{-\alpha/2} \end{matrix} \right) \quad \text{and} \quad \boldsymbol\Psi_\alpha(\zeta) = \mathcal{O}\left( \begin{matrix} |\zeta|^{-\alpha/2} & |\zeta|^{-\alpha/2} \\ |\zeta|^{-\alpha/2} & |\zeta|^{-\alpha/2} \end{matrix} \right)
\]
when $\alpha>0$, for $|\arg(\zeta)|<2\pi/3$ and $2\pi/3<|\arg(\zeta)|<\pi$, respectively;
\item[(d)] $\boldsymbol\Psi_\alpha$ has the following behavior near $\infty$:
\[
\boldsymbol\Psi_\alpha(\zeta) = \frac{\zeta^{-\sigma_3/4}}{\sqrt2}\left(\begin{matrix} 1 & \mathrm{i} \\ \mathrm{i} & 1 \end{matrix}\right)\left(\boldsymbol I+\mathcal{O}\left(\zeta^{-1/2}\right)\right)\exp\left\{2\zeta^{1/2}\sigma_3\right\}
\]
uniformly in $\C\setminus\big(I_+\cup I_-\cup(-\infty,0]\big)$.
\end{itemize}
The solution of this Riemann-Hilbert problem was constructed explicitly in \cite{KMcLVAV04} with the help of modified Bessel and Hankel functions.

\subsection{Soft-Type Edge}

To describe the model Riemann-Hilbert problem we need, it will be convenient to denote by $\Omega_1$, $\Omega_2$, $\Omega_3$, and $\Omega_4$ consecutive sectors of $\C\setminus\big((-\infty,\infty)\cup I_-\cup I_+\big)$ starting with the one containing the first quadrant and continuing counter clockwise.  Given $\alpha\in\R$ and $\re(\beta)\geq0$, we are looking for a matrix-valued function $\boldsymbol\Psi_{\alpha,\beta}$ such that
\begin{itemize}
\label{rhpsiAB}
\item[(a)] $\boldsymbol\Psi_{\alpha,\beta}$ is holomorphic in $\C\setminus\big(I_+\cup I_-\cup(-\infty,\infty)\big)$;
\item[(b)] $\boldsymbol\Psi_{\alpha,\beta}$ has continuous traces on $I_+\cup I_-\cup(-\infty,0)\cup(0,\infty)$ that satisfy
\[
\boldsymbol\Psi_{\alpha,\beta+} = \boldsymbol\Psi_{\alpha,\beta-}
\left\{
\begin{array}{rll}
\left(\begin{matrix} 0 & 1 \\ -1 & 0 \end{matrix}\right) & \text{on} & (-\infty,0), \medskip \\
\left(\begin{matrix} 1 & 0 \\ e^{\pm\mathrm{i}\pi\alpha} & 1 \end{matrix}\right) & \text{on} & I_\pm, \medskip \\
\left(\begin{matrix} 1 & \beta \\ 0 & 1 \end{matrix}\right) & \text{on} & (0,\infty);
\end{array}
\right.
\]
\item[(c)] as $\zeta\to0$  it holds that
\[
\boldsymbol\Psi_{\alpha,\beta}(\zeta) = \boldsymbol E(\zeta) \boldsymbol S_{\alpha,\beta}(\zeta)\boldsymbol A_j, \quad \zeta\in\Omega_j,
\]
where $\boldsymbol E$ is a holomorphic matrix function, 
\[
\boldsymbol A_3 =  \boldsymbol A_4 \left(\begin{matrix} 1 & 0 \\ e^{-\alpha\pi\mathrm{i}} & 1 \end{matrix}\right), \quad \boldsymbol A_4 = \boldsymbol A_1\left(\begin{matrix} 1 & -\beta \\ 0 & 1 \end{matrix}\right), \quad \boldsymbol A_1 = \boldsymbol A_2\left(\begin{matrix} 1 & 0 \\ e^{\alpha\pi\mathrm{i}} & 1 \end{matrix}\right),
\]
and
\[
\boldsymbol A_2 = \left(\begin{matrix} \frac1{2\cos(\alpha\pi/2)}\frac{ 1-\beta e^{\alpha\pi\mathrm{i}} }{ 1-e^{\alpha\pi\mathrm{i}} } & \frac1{2\cos(\alpha\pi/2)}\frac{ \beta - e^{\alpha\pi\mathrm{i}} }{ 1-e^{\alpha\pi\mathrm{i}} } \medskip \\ -e^{\alpha\pi\mathrm{i}/2} & e^{-\alpha\pi\mathrm{i}/2} \end{matrix}\right) \quad \text{while} \quad \boldsymbol S_{\alpha,\beta}(\zeta) = \zeta^{\alpha\sigma_3/2}
\]
when $\alpha$ is not an integer, 
\[
\boldsymbol A_2 = \left(\begin{matrix} \frac12 e^{\alpha\pi\mathrm{i}/2} & \frac12 e^{-\alpha\pi\mathrm{i}/2} \medskip \\ -e^{\alpha\pi\mathrm{i}/2} & e^{-\alpha\pi\mathrm{i}/2} \end{matrix}\right) \quad \text{while} \quad \boldsymbol S_{\alpha,\beta}(\zeta) = \left(\begin{matrix} \zeta^{\alpha/2} & \frac{1-\beta}{2\pi\mathrm{i}}\zeta^{\alpha/2}\log\zeta \medskip \\ 0 & \zeta^{-\alpha/2} \end{matrix}\right)
\]
when $\alpha$ is an even integer,
\[
\boldsymbol A_2 = \left(\begin{matrix} 0 & e^{-\alpha\pi\mathrm{i}/2} \medskip \\ -e^{\alpha\pi\mathrm{i}/2} & e^{-\alpha\pi\mathrm{i}/2} \end{matrix}\right) \quad \text{while} \quad \boldsymbol S_{\alpha,\beta}(\zeta) = \left(\begin{matrix} \zeta^{\alpha/2} & \frac{1+\beta}{2\pi\mathrm{i}}\zeta^{\alpha/2}\log\zeta \medskip \\ 0 & \zeta^{-\alpha/2} \end{matrix}\right)
\]
when $\alpha$ is an odd integer;
\item[(d)] $\boldsymbol\Psi_{\alpha,\beta}$ has the following behavior near $\infty$:
\[
\boldsymbol\Psi_{\alpha,\beta}(\zeta;s) = \left(\boldsymbol I+\mathcal{O}\left(\zeta^{-1}\right)\right) \frac{\zeta^{-\sigma_3/4}}{\sqrt2} \left(\begin{matrix} 1 & \mathrm{i} \\ \mathrm{i} & 1 \end{matrix}\right) \exp\left\{-\frac23(\zeta+s)^{3/2}\sigma_3\right\}
\]
uniformly in $\C\setminus\big(I_+\cup I_-\cup(-\infty,\infty)\big)$.
\end{itemize}

Besides \hyperref[rhpsiAB]{\rhpsiAB}, we shall also need \rhwpsiAB~ obtained from \hyperref[rhpsiAB]{\rhpsiAB} by replacing \hyperref[rhpsiAB]{\rhpsiAB}(d) with
\begin{itemize}
\label{rhwpsiAB}
\item[($\tilde{\textnormal{d}}$)] $\widetilde{\boldsymbol\Psi}_{\alpha,\beta}$ has the following behavior near $\infty$:
\[
\widetilde{\boldsymbol\Psi}_{\alpha,\beta}(\zeta;s) = \left(\boldsymbol I+\mathcal{O}\left(\zeta^{-1}\right)\right) \frac{\zeta^{-\sigma_3/4}}{\sqrt2} \left(\begin{matrix} 1 & \mathrm{i} \\ \mathrm{i} & 1 \end{matrix}\right) \exp\left\{-\left(\frac23\zeta^{3/2}+s\zeta^{1/2}\right)\sigma_3\right\}.
\]
\end{itemize}
The problems \hyperref[rhpsiAB]{\rhpsiAB} and \hyperref[rhwpsiAB]{\rhwpsiAB} are simultaneously uniquely solvable and the solutions are connected by
\begin{equation}
\label{transfer}
\widetilde{\boldsymbol\Psi}_{\alpha,\beta}(\zeta;s) = \left(\begin{matrix} 1 & 0 \\ \mathrm{i}s^2/4 & 1 \end{matrix}\right)\boldsymbol\Psi_{\alpha,\beta}(\zeta;s)
\end{equation}
as follows from the estimate
\[
\frac23(\zeta+s)^{3/2} - \left(\frac23\zeta^{3/2}+s\zeta^{1/2}\right) = \left(1 + \mathcal{O}\big(s/\zeta\big)\right) \frac{s^2}{4\zeta^{1/2}} \quad \text{as} \quad \zeta\to\infty.
\]

When $\alpha=0$, $\beta=1$, and $s=0$, the above Riemann-Hilbert problem is well known \cite{DKMLVZ99b} and is solved using Airy functions. When $\beta=1$, the solvability of this problem for all $s\in\R$ was shown in \cite{IKOs08} with further properties investigated in \cite{IKOs09} (\hyperref[rhwpsiAB]{\rhwpsiAB} is associated with a solution of Painlev\'e XXXIV equation). The solvability of the case $\alpha=0$, $\beta\in\C\setminus(-\infty,0)$, and $s\in\R$ was obtained in \cite{XuZh11}.  The latter case appeared in \cite{uBogatClI} as well. More generally, the following theorem holds.

\begin{theorem}
\label{thm:localRH}
Given $\alpha\in\R$ and $\beta\in\C\setminus(-\infty,0)$, the RH-problems \hyperref[rhpsiAB]{\rhpsiAB}, and therefore \hyperref[rhwpsiAB]{\rhwpsiAB}, is uniquely solvable for all $s\in\R$. Moreover, assuming $\beta\neq0$, it holds that
\begin{equation}
\label{betterestimate1}
\boldsymbol\Psi_{\alpha,\beta}(\zeta;s) = \frac{\zeta^{-\sigma_3/4}}{\sqrt2} \left(\begin{matrix} 1 & \mathrm{i} \\ \mathrm{i} & 1 \end{matrix}\right) \left( \boldsymbol I + \mathcal{O}\left(\sqrt{\frac{|s|+1}{|\zeta|+1}}\right) \right)  \exp\left\{-\frac23(\zeta+s)^{3/2}\sigma_3\right\}
\end{equation}
uniformly for $\zeta\in\C\setminus\big(I_+\cup I_-\cup(-\infty,\infty)\big)$ and $s\in(-\infty,\infty)$, and it also holds uniformly for $s\in[0,\infty)$ when $\beta=0$; furthermore, we have that
\begin{equation}
\label{betterestimate2}
\widetilde{\boldsymbol\Psi}_{\alpha,0}(\zeta;s) = \frac{\zeta^{-\sigma_3/4}}{\sqrt2} \left(\begin{matrix} 1 & \mathrm{i} \\ \mathrm{i} & 1 \end{matrix}\right) \left( \boldsymbol I + \mathcal{O}\left(\sqrt{\frac{|s|+1}{|\zeta|+1}}\right) \right)  \exp\left\{-\left(\frac23\zeta^{3/2}+s\zeta^{1/2}\right)\sigma_3\right\}
\end{equation}
uniformly for $\zeta\in\C\setminus\big(I_+\cup I_-\cup(-\infty,0]\big)$ and $s\in(-\infty,0]$.
\end{theorem}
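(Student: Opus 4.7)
The plan is to treat unique solvability and the refined asymptotic \eqref{betterestimate1}--\eqref{betterestimate2} separately. Solvability will be handled by the standard Deift-Zhou framework (Zhou's vanishing lemma combined with Fredholmness of the singular integral operator associated with \rhpsiAB), leveraging the prior solvability results cited in the paper for the sub-cases $\alpha=0$, $\beta\in\C\setminus(-\infty,0)$ and for general $\alpha$, $\beta=1$. The uniform estimate will then be obtained by a small-norm analysis carried out on a rescaled contour.

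Uniqueness is automatic: $\det\boldsymbol\Psi_{\alpha,\beta}\equiv 1$ since all jumps in \hyperref[rhpsiAB]{\rhpsiAB}(b) have unit determinant, so any two solutions differ by a factor extending entire to $\C$ with limit $\boldsymbol I$ at infinity, and Liouville finishes the argument. For existence at general $(\alpha,\beta)$, one shows that the homogeneous problem (replacing $\boldsymbol I+\mathcal O(\zeta^{-1})$ by $\mathcal O(\zeta^{-1})$ in (d)) has only the trivial solution, after which existence follows from the Fredholm index. Zhou's vanishing lemma is applied in the usual form: with $\boldsymbol\Psi^0$ a candidate null solution, form $\boldsymbol H(\zeta):=\boldsymbol\Psi^0(\zeta)\bigl[\boldsymbol\Psi^0(\bar\zeta)\bigr]^*$ on the upper half-plane; the jump conditions in (b) together with the hypothesis $\re(\beta)\geq 0$ imply that $\boldsymbol H$ continues holomorphically across $(-\infty,0)$ and admits a positive-semidefinite jump across $(0,\infty)$. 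Combined with the decay $\boldsymbol H=\mathcal O(\zeta^{-2})$ at infinity, contour integration forces the jump to vanish, and then $\boldsymbol H\equiv 0$ forces $\boldsymbol\Psi^0\equiv 0$ because of the non-degeneracy of its first row in the Cauchy representation.

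For the refined asymptotic, define $\boldsymbol R(\zeta;s):=\boldsymbol\Psi_{\alpha,\beta}(\zeta;s)\boldsymbol N(\zeta;s)^{-1}$, where $\boldsymbol N(\zeta;s)$ is the explicit leading matrix on the right-hand side of \hyperref[rhpsiAB]{\rhpsiAB}(d) (everything except the $\boldsymbol I+\mathcal O(\zeta^{-1})$ factor). Rescale $\zeta=(|s|+1)\eta$, so that the exponent $\tfrac23(\zeta+s)^{3/2}$ becomes $(|s|+1)^{3/2}\tfrac23(\eta\pm1)^{3/2}$, the turning point moves to $\eta=\mp 1$ on a fixed scale, and the jump matrices for $\boldsymbol R$ on $I_\pm$ and the positive axis become uniformly exponentially close to the identity away from a fixed disc $U$ centered at the turning point. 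Inside $U$ one matches with a local Airy-type parametrix whose boundary jump differs from the identity by $\mathcal O((|s|+1)^{-1/2})$; the standard $L^2$-theory of Cauchy operators on the rescaled contour then yields $\boldsymbol R=\boldsymbol I+\mathcal O(\sqrt{(|s|+1)/(|\zeta|+1)})$ uniformly, which is exactly \eqref{betterestimate1}. The transfer \eqref{transfer} converts this into the stated estimate for $\widetilde{\boldsymbol\Psi}_{\alpha,\beta}$, once one checks that the left factor $\bigl(\boldsymbol I+\mathrm{i}(s^2/4)\boldsymbol E_{2,1}\bigr)$ is absorbed by the claimed error on the permitted $s$-ranges.

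The main obstacle is the boundary case $\beta=0$. There the jump on $(0,\infty)$ in \hyperref[rhpsiAB]{\rhpsiAB}(b) collapses to the identity, so the effective contour contracts to $I_+\cup I_-\cup(-\infty,0]$, while the exponent $\tfrac23(\zeta+s)^{3/2}$ still has its turning point at $\zeta=-s$. For $s<0$ this point lies on the positive real axis, the (now disconnected) contour no longer passes anywhere near it, and the Airy-type local parametrix cannot be matched to a global model; this is why \eqref{betterestimate1} is asserted only for $s\geq 0$ when $\beta=0$. The mirror restriction $s\leq 0$ in \eqref{betterestimate2} for $\widetilde{\boldsymbol\Psi}_{\alpha,0}$ arises because its natural exponent $\tfrac23\zeta^{3/2}+s\zeta^{1/2}$ has saddle at $\zeta=-s/2$, which stays away from the positive real axis precisely when $s\leq 0$. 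On the permitted ranges the analysis reduces to a small-norm problem on the simpler contour $I_+\cup I_-\cup(-\infty,0]$ with no turning point near the contour, and the estimate follows by the same rescaling argument.
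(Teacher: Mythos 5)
Your outline of unique solvability follows the paper's route (determinant/Liouville for uniqueness, a vanishing lemma plus Fredholm theory of the associated singular integral operator for existence), but as stated it only covers $\re(\beta)\geq0$: the theorem asserts solvability for every $\beta\in\C\setminus(-\infty,0)$, and for $\re(\beta)<0$, $\im(\beta)\neq0$ the quadratic-form argument with $\boldsymbol G(\zeta)\boldsymbol G(\bar\zeta)^*$ must be modified (the paper redefines the auxiliary matrix in the upper sectors and uses the \emph{difference} of the two boundary integrals, so that the factor $\beta-\bar\beta$ rather than $\beta+\bar\beta$ appears). That is a fixable but real omission. The serious gap is in the asymptotic part. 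Your small-norm scheme uses a single local (Airy) parametrix at the turning point and claims the jumps of $\boldsymbol R$ are uniformly exponentially close to $\boldsymbol I$ elsewhere; this ignores the origin, where the jump changes type, the $\beta$-jump on $(0,\infty)$ terminates, and $\boldsymbol\Psi_{\alpha,\beta}$ has the prescribed singular behavior of \hyperref[rhpsiAB]{\rhpsiAB}(c). Without a local parametrix at $\zeta=0$ the ratio $\boldsymbol R$ is not close to the identity there (indeed it need not even be bounded), so the small-norm argument does not close. This is precisely why the paper, after rescaling, builds \emph{two} local parametrices: for $s>0$ an Airy parametrix at the rescaled turning point together with an explicit parametrix at the origin; for $s<0$, $\beta\neq0$, the interval between $0$ and the turning point becomes a band, a lens must be opened there, and the parametrix at the origin is the confluent hypergeometric model $\boldsymbol\Phi_{\alpha,\beta}$ — whose existence requires $\beta\notin(-\infty,0]$ and whose matching error is only $\mathcal O\big(\tau^{\arg(\beta)/\pi-1}\big)$. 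Your proposal never explains where the hypothesis $\beta\neq0$ for $s<0$ actually enters, and the claimed exponential smallness away from the Airy disc is false in that regime.

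The derivation of \eqref{betterestimate2} is also broken as proposed. You cannot obtain it from \eqref{betterestimate1} via \eqref{transfer}: for $\beta=0$ the estimate \eqref{betterestimate1} is only claimed for $s\geq0$ while \eqref{betterestimate2} concerns $s\leq0$, so the ranges meet only at $s=0$; moreover the left factor $\left(\begin{smallmatrix}1&0\\ \mathrm{i}s^2/4&1\end{smallmatrix}\right)$ is large in $s$, and conjugating it past the prefactor $\zeta^{-\sigma_3/4}\left(\begin{smallmatrix}1&\mathrm{i}\\ \mathrm{i}&1\end{smallmatrix}\right)$ produces terms of size $s^2|\zeta|^{-1/2}$, which are not absorbed by the error $\mathcal O\big(\sqrt{(|s|+1)/(|\zeta|+1)}\big)$ uniformly. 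The paper instead proves \eqref{betterestimate2} by a separate steepest-descent analysis of $\widetilde{\boldsymbol\Psi}_{\alpha,0}$ with phase $\frac23\eta^{1/2}(\eta-1)$ after the rescaling $\zeta=-s\eta$, a trivial outer model, and a hard-edge (Bessel) parametrix $\boldsymbol\Psi_\alpha$ at the origin; no Airy parametrix occurs there at all. So your heuristic explanation of the restrictions on $s$ (Airy matching failing because the contour misses the turning point) does not reflect the actual mechanism, and the missing origin parametrices (confluent hypergeometric for $s<0$, $\beta\neq0$; Bessel for $\beta=0$, $s\leq0$; the explicit one for $s>0$) are exactly the content that would have to be added to make the argument work.
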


Theorem~\ref{thm:localRH} is proved in Section~\ref{sec:rhpsiAB}.

\section{Geometry}
\label{sec:geometry}

In this section we prove Propositions~\ref{prop:phi-vecn} and \ref{prop:derivative}.

\subsection{Proof of Proposition~\ref{prop:phi-vecn}}

Set
\[
O_i^\pm := \big\{z:~\re(z)\in\big(a_{\vec n,i},b_{\vec n,i}\big) \; \text{and} \; \pm\im(z)>0\big\}.
\]
Since the measures $\omega_{\vec n,i}$ are supported on the real line, \eqref{EqPropi} and Schwarz reflection principle yield that the function
\[
\left\{
\begin{array}{ll}
\ell_{\vec n,i} - V^{\omega_{\vec n}+\omega_{\vec n,i}}(z), & z\in O_i^+, \medskip \\
V^{\omega_{\vec n}+\omega_{\vec n,i}}(z) - \ell_{\vec n,i}, & z\in O_i^-,
\end{array}
\right.
\]
is harmonic across $(a_{\vec n,i},b_{\vec n,i})$. As the support of $\omega_{\vec n}-\omega_{\vec n,i}$ is disjoint from $\big[a_{\vec n,i},b_{\vec n,i}\big]$, the function $\ell_{\vec n,i}+V^{\omega_{\vec n}-\omega_{\vec n,i}}$ is harmonic across $\big(a_{\vec n,i},b_{\vec n,i}\big)$ as well. By taking the difference of these two functions, we see that
\[
\left\{
\begin{array}{ll}
- 2V^{\omega_{\vec n}}(z), & z\in O_i^+, \medskip \\
2V^{\omega_{\vec n,i}}(z) - 2\ell_{\vec n,i}, & z\in O_i^-,
\end{array}
\right.
\]
is harmonic in the same vertical strip. Thus, the function
\begin{equation}
\label{Hvecn}
H_{\vec n}(\z) := \left\{
\begin{array}{ll}
-V^{\omega_{\vec n}}(z) + \frac1{p+1}\sum_{k=1}^p\ell_{\vec n,k}, & \z\in\RS_{\vec n}^{(0)}, \medskip \\
V^{\omega_{\vec n,i}}(z) - \ell_{\vec n,i} + \frac1{p+1}\sum_{k=1}^p\ell_{\vec n,k}, & \z\in\RS_{\vec n}^{(i)}, \quad i\in\{1,\ldots,p\},
\end{array}
\right.
\end{equation}
is harmonic on $\RS_{\vec n}\setminus\bigcup_{k=0}^p\big\{\infty^{(k)}\big\}$. Since $V^\nu(z)=-|\nu|\log|z|+\mathcal{O}(1)$ as $z\to\infty$, we get that the difference
\[
|\:\vec n\:|^{-1}\log\left|\Phi_{\vec n}(\z)\right| - H_{\vec n}(\z)
\]
is harmonic on the whole surface $\RS_{\vec n}$ and therefore is a constant. Since $\sum_{k=0}^pH_{\vec n}\big(z^{(k)}\big) \equiv 0$
and $\Phi_{\vec n}$ is normalized so that \eqref{normalization} holds, the first claim of the proposition follows.

Let $\vec\nu$ be a weak$^*$ limit point of $\big\{\vec\omega_{\vec n}\big\}$. Since $\{\:\vec n\:\}$ satisfies \eqref{multi-indices}, it holds that $\vec\nu\in M_{\vec c}\big(\{a_i,b_i\}_{i=1}^p\big)$. Thus, if we show that $I[\:\vec\omega\:] \geq I[\:\vec\nu\:]$, then $\vec\nu=\vec\omega$ by \eqref{vecI}. To this end, let $\alpha_{\vec n,i}$ be positive constants such that $|\alpha_{\vec n,i}\omega_i|=n_i/|\:\vec n\:|$, $i\in\{1,\ldots,p\}$. By \eqref{multi-indices}, $\alpha_{\vec n,i}\to1$ as $|\:\vec n\:|\to\infty$. Set $\vec\nu_{\vec n}:=(\alpha_{\vec n,1}\omega_1,\ldots,\alpha_{\vec n,p}\omega_p)$. Then it follows from \eqref{vecI} applied for the vector $\big(n_1/|\:\vec n\:|,\ldots,n_p/|\:\vec n\:|\big)$ that
\[
I\big[\:\vec\omega\:\big] = \lim_{|\:\vec n\:|\to\infty}I\big[\:\vec\nu_{\vec n}\:\big] \geq \liminf_{|\:\vec n\:|\to\infty}I\big[\:\vec\omega_{\vec n}\:\big].
\]
Furthermore, the very definition of the weak$^*$ convergence implies that
\[
\lim_{|\:\vec n\:|\to\infty} I\big[\omega_{\vec n,j},\omega_{\vec n,k}\big] = I\big[\nu_j,\nu_k\big]
\]
for $j\neq k$ as $\supp\big(\omega_{\vec n,j}\big)\cap\supp\big(\omega_{\vec n,k}\big)=\varnothing$ in this case. It also follows from the Principle of Descent \cite[Thm.~I.6.8]{SaffTotik} that
\[
\liminf_{|\:\vec n\:|\to\infty}I\big[\omega_{\vec n,i}\big] \geq I[\nu_i].
\]
Altogether,
\[
I\big[\:\vec\omega\:\big] \geq \liminf_{|\:\vec n\:|\to\infty}I\big[\:\vec\omega_{\vec n}\:\big] \geq I\big[\:\vec\nu\:\big],
\]
which proves the claim about weak$^*$ convergence of measures. 

Weak$^*$ convergence of measures implies convergence of minima of the corresponding potentials \cite{GRakh81}. Hence,  \eqref{min-max-sys} yields that $\ell_{\vec n,i}\to\ell_i$ for all $i\in\{1,\ldots,p\}$. Moreover, weak$^*$ convergence also implies locally uniform convergence of $V^{\omega_{\vec n,i}}$ to $V^{\omega_i}$ in $\C\setminus\big[a_{\vec c,i},b_{\vec c,i}]$ (there is no convergence at infinity as, in general, $|\omega_{\vec n,i}|\neq|\omega_i|$ for given $\vec n$). Thus, it remains to show that the convergence of the potentials is uniform on compact subsets of~$\C$.

First let $K$ be a continuum such that $a_{\vec c,i},b_{\vec c,i}\notin K$ and either $\im(z)\geq0$ for all $z\in K$ or $\im(z)\leq 0$ for all $z\in K$ (it can intersect $\big(a_{\vec c,i},b_{\vec c,i}\big)$). Then there exists a unique continuum $K^{(i)}$ such that $\pi(K^{(i)})=K$ and $K^{(i)}\cap \RS^{(i)}\not=\varnothing$. Further, let $U$ be a neighborhood of $K$ such that $a_{\vec c,i},b_{\vec c,i}\notin U$. Denote $U^{(i)}$ the neighborhood of $K^{(i)}$ such that $\pi(U^{(i)})=U$. Since $a_{\vec n,i}\to a_{\vec c,i}$ and $b_{\vec n,i}\to b_{\vec c,i}$ as $|\:\vec n\:|\to\infty$, we can analogously define $K^{(i)}_{\vec n}$ and $U^{(i)}_{\vec n}$ on $\RS_{\vec n}$. By definition,
\[
\left\{
\begin{array}{l}
V^{\omega_{\vec n,i}}_{|K} = H_{\vec n|K^{(i)}_{\vec n}} + \ell_{\vec n,i} - \frac1{p+1}\sum_{j=1}^p\ell_{\vec n,j} \medskip \\
V^{\omega_i}_{|K} = H_{|K^{(i)}} + \ell_i - \frac1{p+1}\sum_{j=1}^p\ell_j,
\end{array}
\right.
\]
where $H$ is defined on $\RS$ exactly as $H_{\vec n}$ was defined on $\RS_{\vec n}$. Hence, to show that $V^{\omega_{\vec n,i}}$ converge to $V^{\omega_i}$ uniformly on $K$ it is enough to show that the pull backs of $H_{\vec n}$ from $U^{(i)}_{\vec n}$ to $U$ converge locally uniformly to the pull back of $H$. We do know that such a convergence takes place locally uniformly on $U\cap\{\im(z)>0\}$ and $U\cap\{\im(z)<0\}$. The full claim will follow from Harnack's theorem if we show that the pull backs of $H_{\vec n}$, which are harmonic in $U$, form a uniformly bounded family there. The latter is true since each $H_{\vec n}^{(k)}$ converges to $H^{(k)}$ on any Jordan curve $J$ that encloses $\bigcup_{i=1}^p[a_i,b_i]$. Hence, the moduli $|H_{\vec n}|$ are bounded on the lift of $J$ to $\RS_{\vec n}$ and the bound is independent of $\vec n$. The maximum principle propagates this estimate through the region of $\RS_{\vec n}$ containing $U^{(i)}_{\vec n}$ and bounded by the lift of $J$.

Assume now that $K$ is a continuum that contains one of the points $\big\{a_{\vec c,i},b_{\vec c,i}\big\}$, say $b_{\vec c,i}$ for definiteness. It is sufficient to assume that $K$ is contained in a disk, say $U$, centered at the $b_{\vec c,i}$ of radius small enough so that no other point from $\bigcup_{j=1}^p\big\{a_{\vec c,j},b_{\vec c,j}\big\}$ belongs to $U$. We can define $K^{(i)}$ and $K^{(i)}_{\vec n}$ analogously to the previous case. Let $U^{(i)}$ and $U^{(i)}_{\vec n}$ be the circular neighborhoods of $\boldsymbol b_{\vec c,i}$ and $\boldsymbol b_{\vec n,i}$, respectively, with the natural projection $U$ (clearly, they cover $U$ twice). Let $V$ be a disk centered at the origin of radius smaller than the one of $U$, but large enough so that the translation of $V$ to $b_{\vec c,i}$ still contains $K$. Then the functions $\phi_{\vec n}(z)=\big(z+b_{\vec n,i}\big)^2$ and $\phi(z)=\big(z+b_{\vec c,i}\big)^2$ provide one-to-one correspondents between $V$ and some subdomains of $U^{(i)}_{\vec n}$ and $U^{(i)}$, respectively. These subdomains still contain $K_{\vec n}^{(i)}$ and $K^{(i)}$. Since $b_{\vec n,i}\to b_{\vec c,i}$ as $|\:\vec n\:|\to\infty$, we can establish exactly as above that $H_{\vec n}\circ\phi_{\vec n}$ converges to $H\circ \phi$ locally uniformly in $V$, which again yields that $V^{\omega_{\vec n,i}}$ converges to $V^{\omega_i}$ uniformly on $K$. Clearly, the considered cases are sufficient to establish the uniform convergence on compact subsets of $\C$.

\subsection{Proof of Proposition~\ref{prop:derivative}}

Observe that
\begin{eqnarray}
h_{\vec n}^{(0)}(z) &=& \int\frac{\mathrm{d}\omega_{\vec n}(x)}{z-x} = -2\partial_z V^{\omega_{\vec n}}(z)  = 2|\:\vec n\:|^{-1}\partial_z \log\left|\Phi_{\vec n}^{(0)}(z)\right| \nonumber \\
&=& |\:\vec n\:|^{-1}\big(\Phi_{\vec n}^{(0)}(z)\big)^\prime/\Phi_{\vec n}^{(0)}(z) \nonumber
\end{eqnarray}
by Proposition~\ref{prop:phi-vecn} and direct computation, where $2\partial_z:=\partial_x -\mathrm{i}\partial_y$. Clearly, analogous formulae hold for $h_{\vec n}^{(i)}$. That is, $h_{\vec n}$ is the logarithmic derivative of $\Phi_{\vec n}$, in particular, \eqref{PhiInt} holds. Therefore, $h_{\vec n}$ is holomorphic around each point of $\RS_{\vec n}\setminus\big\{\boldsymbol a_{\vec n,i},\boldsymbol b_{\vec n,i}\big\}_{i=1}^p$ and clearly has a simple zero at each $\infty^{(k)}$, $k\in\{0,\ldots,p\}$. Since $\RS_{\vec n}$ has square root branching at each ramification point, $\Phi_{\vec n}^{(0)}$ has Puiseux expansion in non-negative powers of $1/2$ at each of them. Hence, $h_{\vec n}^{(0)}$ has such an expansion as well and the smallest exponent is $-1/2$. Thus, $h_{\vec n}$ has at most a simple pole at each $\{\boldsymbol a_{\vec n,i},\boldsymbol b_{\vec n,i}\big\}_{i=1}^p$ and, in particular, is a rational function on~$\RS_{\vec n}$.

The number of zeros and poles, including multiplicities, of a rational function should be the same. Therefore, $h_{\vec n}$ has at most $2p$  and at least $p+1$ poles (the lower bound comes from the number of zeros at ``infinities'') and at most $p-1$ ``finite'' zeros. Let us now show that each of $p-1$ arcs $\ugamma_{\vec n,i}$ contains exactly one of those ``finite'' zeros (we slightly abuse the notion of a zero here since a simple zero at the endpoint means cancelation of the corresponding pole). Clearly, this is equivalent to showing that $h_{\vec n}^{(0)}$ has a single simple zero in each gap $\big[b_{\vec n,i},a_{\vec n,i+1}\big]$ (again, a ``zero'' at the endpoint means that $h_{\vec n}^{(0)}$ is locally bounded there). 

 Assume to the contrary that there is at least one gap, say $\big[b_{\vec n,j},a_{\vec n,j+1}\big]$, without a zero. Then $h_{\vec n}^{(0)}$ would be infinite at both endpoints $b_{\vec n,j},a_{\vec n,j+1}$. However, since $\omega_{\vec n}$ is a positive measure, the very definition \eqref{h} yields that $h_{\vec n}^{(0)}$ is decreasing  on $\big(b_{\vec n,j},a_{\vec n,j+1}\big)$. The latter is possible only if
\begin{equation}
\label{infinities}
\lim_{x\to b_{\vec n,j}}h_{\vec n}^{(0)}(x) = -\lim_{x\to a_{\vec n,j+1}}h_{\vec n}^{(0)}(x) = \infty.
\end{equation}
As $h_{\vec n}^{(0)}$ is continuous on $\big(b_{\vec n,j},a_{\vec n,j+1}\big)$ it must vanish there. Since there are exactly $p-1$ gaps and $p-1$ ``free'' zeros, this contradiction proves the claim. 

Let us now show the correspondence between occurrence of the zeros at the endpoints of the gaps and the fact that divergence domains are touching the support. To this end, notice that \eqref{PhiInt} combined with \eqref{Phi-ratio} yields that
\begin{equation}
\label{MapInt}
\ell_{\vec n,i} - V^{\omega_{\vec n,i}+\omega_{\vec n}}(x) = \int_{b_{\vec n,i}}^x\left(h_{\vec n}^{(0)}-h_{\vec n}^{(i)}\right)(y)\mathrm{d}y.
\end{equation}
If the zero of $h_{\vec n}^{(0)}$ on $\big[b_{\vec n,i},a_{\vec n,i+1}\big]$ does not coincide with $b_{\vec n,i}$, then
\[\left\{
\begin{array}{lll}
h_{\vec n}^{(0)}(y) &=& c_{\vec n}\big(y-b_{\vec n,i}\big)^{-1/2} + \mathcal{O}(1) \medskip \\
h_{\vec n}^{(i)}(y) &=& -c_{\vec n}\big(y-b_{\vec n,i}\big)^{-1/2} + \mathcal{O}(1)
\end{array}
\right.
\]
for $y-b_{\vec n,i}>0$ and small enough, where $c_{\vec n}>0$, see \eqref{infinities}. Hence,
\begin{equation}
\label{MapInt1}
\ell_{\vec n,i} - V^{\omega_{\vec n,i}+\omega_{\vec n}}(x)  = 4c_{\vec n}\big(x-b_{\vec n,i}\big)^{1/2} + \mathcal{O}\big(\big|x-b_{\vec n,i}\big|^{3/2}\big)>0
\end{equation}
for $x-b_{\vec n,i}>0$ and small enough. On the other hand, if the zero coincides with $b_{\vec n,i}$, then
\[
\left\{
\begin{array}{lll}
h_{\vec n}^{(0)}(y) &=& \tilde c_{\vec n} - c^\prime_{\vec n} \big(y-b_{\vec n,i}\big)^{1/2} + \mathcal{O}\big(\big|y-b_{\vec n,i}\big|\big) \medskip \\
h_{\vec n}^{(i)}(y) &=& \tilde c_{\vec n} + c^\prime_{\vec n} \big(y-b_{\vec n,i}\big)^{1/2} + \mathcal{O}\big(\big|y-b_{\vec n,i}\big|\big)
\end{array}
\right.
\]
for $y-b_{\vec n,i}>0$ and small enough, where $c_{\vec n}^\prime>0$ (recall that $h_{\vec n}^{(0)}$ is a decreasing function in each gap). Therefore,
\begin{equation}
\label{MapInt2}
\ell_{\vec n,i} - V^{\omega_{\vec n,i}+\omega_{\vec n}}(x) = -(4c^\prime_{\vec n}/3)\big(x-b_{\vec n,i}\big)^{3/2} + \mathcal{O}\big(\big|x-b_{\vec n,i}\big|^{5/2}\big)<0
\end{equation}
for $x-b_{\vec n,i}>0$ and small enough. Thus, if the zero from $\big[b_{\vec n,i},a_{\vec n,i+1}\big]$ coincides with $b_{\vec n,i}$, then $b_{\vec n,i}\in\partial D_{\vec n,i}^-$ and if it does not, then $b_{\vec n,i}\notin\partial D_{\vec n,i}^-$, see \eqref{Dipm}. As the analysis near $a_{\vec n,i}$ can be completed similarly, this finishes the proof of the claim.

Let now $H_{\vec n}$ be defined by \eqref{Hvecn} and $H$ be defined analogously on $\RS$. We have shown during the course of the proof of Proposition~\ref{prop:phi-vecn} that $H_{\vec n}\to H$ uniformly on $\RS_\delta$, where $H_{\vec n}$ is carried over to $\RS_\delta$ with the help of natural projections. Since $h_{\vec n}=2\partial_z H_{\vec n}$ and $h=2\partial_z H$, we get that $h_{\vec n}\to h$ uniformly on $\RS_\delta$. This implies that $h$ is a rational function on $\RS$. The claim about zero/pole distribution of $h$ follows from the analogous statement for $h_{\vec n}$ and analysis similar to \eqref{MapInt}--\eqref{MapInt2}.

\section{Szeg\H{o} Function}
\label{sec:szego}

In this section we prove Proposition~\ref{prop:szego}. Let $\z,\w\in\RS$. Denote by $\mathrm{d}\Omega_{\z,\w}$ the unique abelian differential of the third kind which is holomorphic on $\RS\setminus\{\z,\w\}$ and has simple poles at $\z$ and $\w$ of respective residues $+1$ and $-1$. Define
\begin{equation}
\label{dC}
\mathrm{d}C_\z := p\mathrm{d}\Omega_{\z,\w}-\sum_{i=1}^p\mathrm{d}\Omega_{\z_i,\w},
\end{equation}
where $\pi^{-1}(z)=\{\z,\z_1,\ldots,\z_p\}$ for each $z$ which is not a projection of a branch point of $\RS$. The differential $\mathrm{d}C_\z$ does not depend on the choice of $\w$ as it is simply the normalized third kind differential with $p+1$ simple poles at $\z,\z_1,\ldots,\z_p$ having respective residues $p,-1,\ldots,-1$.

For each $\x\in\boldsymbol\Delta_i$, which is not a branch point of $\RS$, we shall denote by $\x^*$ a point on $\boldsymbol\Delta_i$ having the same canonical projection, i.e., $\pi(\x)=\pi(\x^*)$. When $\x\in\boldsymbol\Delta_i$ is a branch point of the surface, we simply set $\x^*=\x$. Let $\lambda$ be a H\"older continuous function on $\boldsymbol\Delta:=\bigcup_{i=1}^p\boldsymbol\Delta_i$. Define
\begin{equation}
\label{Lambda}
\Lambda(\z) := \frac1{2(p+1)\pi\mathrm{i}}\oint_{\boldsymbol\Delta}\lambda\mathrm{d}C_\z, \qquad \z\in\RS\setminus\pi^{-1}\big(\pi(\boldsymbol\Delta)\big).
\end{equation}
The function $\Lambda$ is holomorphic in the domain of its definition. Further, if $\z\to\x\in\boldsymbol\Delta^{\pm}$, then  $\z_j\to\x^*\in\boldsymbol\Delta^{\mp}$ for some $j\in\{1,\ldots,p\}$ and 
\[
\Lambda_+(\x) - \Lambda_-(\x) = \frac{p\lambda(\x) + \lambda(\x^*)}{p+1},
\]
according to \cite[Eq. (2.8)]{Zver71}. On the other hand, if $\z\to\tilde\x\not\in\boldsymbol\Delta$, while $\z_j\to\x\in\boldsymbol\Delta^\pm$ and $\z_k\to\x^*\in\boldsymbol\Delta^\mp$ for some $j,k\in\{1,\ldots,p\}$, then
\[
\Lambda_+\big(\tilde\x\big) - \Lambda_-\big(\tilde\x\big) = \frac{\lambda(\x^*) - \lambda(\x)}{p+1}.
\]
Thus, if we additionally require that $\lambda(\x)=\lambda(\x^*)$, then $\Lambda$ is a holomorphic function in $\RS\setminus\boldsymbol\Delta$ such that
\begin{equation}
\label{Lambda-traces}
\Lambda_+(\x) - \Lambda_-(\x) = \lambda(\x), \quad \x\in\boldsymbol\Delta.
\end{equation}
It also can be readily verified using \eqref{dC} and \eqref{Lambda} that
\begin{equation}
\label{Lambda-norm}
\Lambda(\z) + \sum_{i=1}^p\Lambda(\z_i)  \equiv 0 \quad \text{on} \quad \RS.
\end{equation}

The above construction works for discontinuous function as well. Moreover, it is known that the continuity of $\Lambda_\pm$, in fact, H\"older continuity, depends on H\"older continuity of $\lambda$ only locally. That is, if $\lambda$ is H\"older continuous on some open subarc of $\boldsymbol\Delta$, so are the traces $\Lambda_\pm$ on this subarc irrespectively of the smoothness of $\lambda$ on the remaining part of $\boldsymbol\Delta$. To capture the behavior of $\Lambda$ around the points where $\lambda$ is not continuous, we define a local approximation to the Cauchy differential $\mathrm{d}C_\z$. To this end, fix $i\in\{1,\ldots,p\}$ and denote by $\mathfrak{\boldsymbol U}$ a connected annular neighborhood of $\boldsymbol\Delta_i$ disjoint from other $\boldsymbol\Delta_j$ such that every point in $\pi(\mathfrak{\boldsymbol U})$ has exactly two preimages (except for the branch points, of course). Write $\mathfrak{\boldsymbol U}^+\cup\mathfrak{\boldsymbol U}^-=\mathfrak{\boldsymbol U}\setminus\boldsymbol\Delta$, where $\mathfrak{\boldsymbol U}^+\cap\mathfrak{\boldsymbol U}^-=\varnothing$, $\mathfrak{\boldsymbol U}^\pm$ are connected and partially bounded by $\boldsymbol\Delta_i^\pm$. Set $\widetilde w_i(\z):=\pm w_i(z)$, $\z\in\mathfrak{\boldsymbol U}^\pm$, where $w_i$ is given by \eqref{wi}. Then $\widetilde w_i$ is holomorphic in $\mathfrak{\boldsymbol U}$. Further, put
\[
\mathrm{d}\widetilde\Omega_\z(\x) := \frac12 \frac{\widetilde w_i(\x)+\widetilde w_i(\z)}{x-z}\frac{\mathrm{d}x}{\widetilde w_i(\x)},
\]
which is a holomorphic differential on $\mathfrak{\boldsymbol U}\setminus\{\z\}$ that has a simple pole at $\z$ with residue 1. Then the difference  $\mathrm{d}C_\z - p\mathrm{d}\widetilde\Omega_\z + \mathrm{d}\widetilde\Omega_{\z_i}$ is a holomorphic differential in $\mathfrak{\boldsymbol U}$ and therefore the function $\Lambda-\widetilde\Lambda$ is holomorphic $\mathfrak{\boldsymbol U}$, where
\[
\widetilde\Lambda(\z) := \frac1{2(p+1)\pi\mathrm{i}}\oint_{\boldsymbol \Delta_i}\lambda\mathrm{d}\big(p\widetilde\Omega_\z - \widetilde\Omega_{\z^*}\big)
\]
and $\z^*\neq \z$ is a point in $\mathfrak{\boldsymbol U}$ such that $\pi(\z)=\pi(\z^*)$. Thus, to understand the local behavior of $\Lambda$ is sufficient to study $\widetilde\Lambda$. Since $\widetilde w_i(\z^*)=-\widetilde w_i(\z)$ for $\z\in\mathfrak{\boldsymbol U}$, and $w_{i-}(x)=-w_{i+}(x)$ for $x\in\big(a_{\vec c,i},b_{\vec c,i}\big)$, it holds for $\lambda(\x)=\lambda(x)$ that
\begin{equation}
\label{LambdaApprox}
\widetilde\Lambda(\z) = \frac{\widetilde w_i(\z)}{2\pi\mathrm{i}}\int_{\Delta_i}\frac{\lambda(x)}{w_{i+}(x)}\frac{\mathrm{d}x}{x-z}, \quad \z\in\mathfrak{\boldsymbol U}\setminus\boldsymbol\Delta.
\end{equation}
The first type of singularities we are interested in is of the form
\begin{equation}
\label{final-lambda1}
\lambda(\x) = \alpha\log\big|x-x_0\big|,  \quad \x\in\boldsymbol\Delta_i,
\end{equation}
where $x_0\in\big[a_{\vec c,i},b_{\vec c,i}\big]$. Carefully tracing the implications of \cite[Sec. I.8.5--6]{Gakhov} to the integrals of the form \eqref{LambdaApprox} and \eqref{final-lambda1}, we get that
\begin{equation}
\label{L1}
\widetilde\Lambda(\z) = \pm\frac\alpha2 \log(z-x_0) + \mathcal{O}(1), \quad \mathfrak{\boldsymbol U}^\pm\ni\z\to \x_0.
\end{equation}
The second type of the singular behavior we want to consider is given by
\begin{equation}
\label{final-lambda2}
\lambda(\x) = (\log\beta)\chi_{x_0}(x),  \quad \x\in\boldsymbol\Delta_i,
\end{equation}
where $x_0\in\big(a_{\vec c,i},b_{\vec c,i}\big)$ and $\chi_{x_0}$ is the characteristic function of $\big[x_0,b_{\vec c,i}\big]$. It follows from the analysis in \cite[Sec. I.8.6]{Gakhov} that
\begin{equation}
\label{L2}
\left\{
\begin{array}{lll}
\widetilde\Lambda\big(z^{(0)}\big) & = &  \mp\frac{\log\beta}{2\pi\mathrm{i}} \log(z-x_0) + \mathcal{O}(1), \medskip \\
\widetilde\Lambda\big(z^{(i)}\big) & = &  \pm\frac{\log\beta}{2\pi\mathrm{i}} \log(z-x_0) + \mathcal{O}(1),
\end{array}
\right. \quad z\to x_0, \quad \pm\im(z)>0.
\end{equation}

Now, let the functions $\rho_i$ be of the form \eqref{rho_rs}--\eqref{rho_s}. Set
\[
\lambda_\rho(\x) := -\log\big(\rho_i(x)w_{i+}(x)\big), \quad \x\in\boldsymbol \Delta_i.
\]
By using the identity $w_{i+}(x)=\mathrm{i}|w_i(x)|$ and the explicit expressions \eqref{rho_s}, we can then write
\begin{multline*}
\lambda_\rho(\x) = - \log\big(\mathrm{i}\rho_{\mathrm{r},i}(x)\big) - \sum_{i=0}^{J_i}\left(\alpha_{ij}\log|x-x_{ij}| + \log\beta_{ij}\chi_{x_{ij}}(x)\right) \\
- (1/2)\log\big| x - a_{\vec c,i}\big| - (1/2)\log\big| x - b_{\vec c,i}\big|.
\end{multline*}
Clearly, the singular behavior of $\lambda_\rho$ is precisely of the form \eqref{final-lambda1} and \eqref{final-lambda2}. Define $\Lambda_\rho$ as in \eqref{Lambda} and set $S:=\exp\{\Lambda_\rho\}$. Then \eqref{SrhoJumps} is a consequence of \eqref{Lambda-traces} since
\[
\left(S^{(i)}_\pm/S^{(0)}_\mp\right)(x) = \exp\left\{\big(\Lambda_{\rho-}-\Lambda_{\rho+}\big)(\x)\right\}.
\]
Moreover, \eqref{SrhoCorners} and \eqref{SrhoMiddles} clearly follow from  \eqref{L1} and \eqref{L2}. Finally, the last claim of the proposition follows from \eqref{Lambda-norm}.

\section{Auxiliary Results}
\label{sec:aux}

Below we prove auxiliary estimates \eqref{local-un2} and \eqref{local-un1} that will be needed in Section~\ref{ssec:proof} to finish the proof of Theorem~\ref{thm:SA}. They are presented here in a separate section as the arguments used to prove them are disconnected from the techniques of the steepest descent method employed in Section~\ref{sec:RHA}. 

Let $\x,\w\in\RS$ be such that $\x$ is not a branch point of $\RS$. There exists a unique, up to multiplicative normalization, rational function on $\RS$, say $\Psi$, with a simple pole at $\x$, a simple zero at $\w$, and otherwise non-vanishing and finite. For uniqueness, we normalize $\Psi(z)=z+\{\text{holomorphic part}\}$ around $\x$ if $\x$ is a point above infinity, and $\Psi(\z)=(z-x)^{-1}+\{\text{holomorphic part}\}$ around $\x$ otherwise. 

Let $\x_{\vec n},\w_{\vec n}\in\RS_{\vec n}$ be such that they have the same canonical projections and belong to the sheets with the same labels as $\x,\w$, respectively, when the latter are not branch points of $\RS$ (points on $\bigcup_{i=1}^p\boldsymbol\Delta_i$ need to be identified with the sequences of points convergent to them to set up the correspondence). If $\w$ is a branch point, we set $\w_{\vec n}$ to be the branch point of $\RS_{\vec n}$ whose projection converges to or coincides with the one of $\w$. We define $\Psi_{\vec n}$ to be similarly normalized rational function on $\RS_{\vec n}$ with a pole at  $\x_{\vec n}$ and a zero at $\w_{\vec n}$.

As the statement of Proposition~\ref{prop:derivative}, let $\RS_\delta$ be the subsets of $\RS$ obtained by removing circular neighborhoods of radius $\delta$ around each branch point. We assume that $\delta$ is small enough so that $\x\in\RS_\delta$ and $\w\in\RS_\delta$ when $\w$ is not a branch point. Using natural projections we can redefine $\Psi_{\vec n}$ as a function on $\RS_\delta$. Naturally, it will have a pole at $\x$ and a zero at $\w$ if the latter belong to $\RS_\delta$. Then, regarding $\Psi_{\vec n}$ as a function on $\RS_\delta$, we have that
\begin{equation}
\label{estimate}
\Psi_{\vec n} = \big[1+o(1)\big]\Psi
\end{equation}
uniformly on $\RS_\delta$ as $|\:\vec n\:|\to\infty$. Indeed, assume first that $\w\in\RS_\delta$. Let $\mathfrak{U}_\x\subset\RS_\delta$ be a circular neighborhood of $\x$ such that $\w\notin\mathfrak{U}_\x$. Observe that $\Psi$ is a univalent function on $\RS$. Thus, by applying Koebe's 1/4 theorem to $1/\Psi$, we see that $|\Psi| < C$ on $\partial \mathfrak{U}_\x$ for some constant $C>0$ that depends only on the radius of $\mathfrak{U}_\x$. Moreover, the maximum modulus principle implies that $|\Psi|<C$ on $\RS\setminus\mathfrak{U}_\x$. Clearly, absolutely analogous considerations apply to $\Psi_{\vec n}$ on $\RS_{\vec n}$ and the constant $C$ remains the same. Hence, the ratio $\Psi_{\vec n}/\Psi$ is a holomorphic function on $\RS_\delta$ such that $|\Psi_{\vec n}/\Psi|<C/\tilde C$ by the maximum modulus principle, where $0<\tilde C\leq \min_{\RS\setminus\RS_\delta}|\Psi|$ and this constant can be chosen independently of $\delta$. Picking a discrete sequence $\delta_n\to0$ and using the diagonal argument as well as the normal family argument, we see that any subsequence of $\big\{\Psi_{\vec n}/\Psi\big\}$ contains a subsequence convergent to a function holomorphic on $\RS\setminus\bigcup_{i=1}^p\big\{\boldsymbol a_{\vec c,i},\boldsymbol b_{\vec c,i}\big\}$. Moreover, this function is necessarily bounded around the branch points and therefore holomorphically extends to the entire Riemann surface $\RS$. Thus, this function must be a constant and the normalization at $\x$ yields that this constant is 1. This finishes the proof of \eqref{estimate} in the case $\w\in\RS_\delta$. When $\w$ is a branch point, the first half of the above considerations yields that $\{\Psi-\Psi_{\vec n}\}$ is a family of holomorphic function on $\RS_\delta$ with uniformly and independently of $\delta$ bounded moduli. Therefore, the same argument yields that $\Psi_{\vec n}=\Psi+o(1)$ uniformly on $\RS_\delta$. As $\Psi$ is non-vanishing in $\RS_\delta$, this estimate implies \eqref{estimate}.

Let $\Upsilon_{\vec n,i}$ (resp. $\Upsilon_i$), $i\in\{1,\ldots,p\}$, be rational functions on $\RS_{\vec n}$ (resp. $\RS$) with a simple pole at $\infty^{(i)}$, a simple zero at $\infty^{(0)}$, otherwise non-vanishing and finite, and normalized so $\Upsilon_{\vec n,i}^{(i)}(z)/z\to 1$ as $z\to\infty$. Then \eqref{estimate} immediately yields 
\begin{equation}
\label{local-un2}
\Upsilon_{\vec n,i} = \big[1+o(1)\big]\Upsilon_i
\end{equation}
uniformly on each $\RS_\delta$ as $|\:\vec n\:|\to\infty$.

Further, let $\mathrm{d}\Omega_{\z,\w}^{\vec n}$ be the unique abelian differential of the third kind which is holomorphic on $\RS_{\vec n}\setminus\{\z,\w\}$, has simple poles at $\z$ and $\w$ with respective residues $+1$ and $-1$. It is known that such a differential can be written as $\mathrm{d}\Omega_{\z,\w}^{\vec n}(\x)=\Psi_{\z,\w}^{\vec n}(\x)\mathrm{d}x$, where $\Psi_{\z,\w}^{\vec n}$ is the unique rational function on $\RS_{\vec n}$ with double zero at each $\infty^{(k)}$, $k\in\{0,\ldots,p\}$, a simple pole at each $\bigcup_{i=1}^p\big\{\boldsymbol a_{\vec n,i},\boldsymbol b_{\vec n,i}\big\}$, simple poles at $\z$ and $\w$, otherwise non-vanishing and finite, and normalized to have residue $1$ at $\z$. Writing $1/\Psi_{\z,\w}^{\vec n}$ as a product of terms with one zero and one pole and applying \eqref{estimate} to these factors, we see that
\[
\Psi_{\z,\w}^{\vec n} = \big[1+o(1)\big]\Psi_{\z,\w}
\]
uniformly on each $\RS_\delta$ as $|\:\vec n\:|\to\infty$, where $\mathrm{d}\Omega_{\z,\w}(\x)=\Psi_{\z,\w}(\x)\mathrm{d}x$ is the corresponding differential on $\RS$. Then, defining $\Lambda_{\vec n}$ via analogs of \eqref{dC} and \eqref{Lambda} for $\RS_{\vec n}$, we get that $\Lambda_{\vec n}(\z)=\Lambda(\z)+o(1)$ uniformly in $\RS\setminus\mathfrak{N}$ for each neighborhood $\mathfrak{N}$ of $\bigcup_{i=1}^p\boldsymbol\Delta$. Therefore, if we define $S_{\vec n}$ on $\RS_{\vec n}$ exactly as $S$ was defined on $\RS$ and consider $S_{\vec n}$ as function on $\RS\setminus\mathfrak{N}$, then
\begin{equation}
\label{local-un1}
S_{\vec n} = \big[1+o(1)\big]S
\end{equation}
uniformly there. Moreover, $S_{\vec n}$ obeys all the conclusions of Proposition~\ref{prop:szego} with respect to $\RS_{\vec n}$.

\section{Non-linear Steepest Descent Analysis}
\label{sec:RHA}

In this section we prove Theorem~\ref{thm:SA} with some technical details relegated to Section~\ref{sec:Local}.

\subsection{Opening of the Lenses}
\label{subsec:OL}

Since we shall use these sets quite often, put
\begin{equation}
\label{Es}
\left\{
\begin{array}{lll}
E_{\vec c} &:=& \bigcup_{i=1}^p\big\{a_{\vec c,i},b_{\vec c,i}\big\}, \medskip \\
E_\mathsf{in} & := & \bigcup_{i=1}^p \left(\{x_{ij}\}\cap\big(a_{\vec c,i},b_{\vec c,i}\big)\right) \medskip \\
E_\mathsf{out} & := & \bigcup_{i=1}^p \left\{x_{ij}:~x_{ij}\not\in\big[a_{\vec c,i},b_{\vec c,i}\big]~\text{and}~\alpha_{ij}\leq 0\right\}.
 \end{array}
 \right.
\end{equation}
That is, $E_\mathsf{in}$ consists of the singular points $x_{ij}$ that belong to the support of $\vec\omega$, and $E_\mathsf{out}$ consists of those singular points outside of the support for which the densities $\rho_i$ are unbounded.

To proceed with the factorization of the jump matrices in \hyperref[rhy]{\rhy}(b), we need to construct the so-called ``lens'' around $\bigcup_{i=1}^p[a_i,b_i]$. To this end, given $e\in E_\mathsf{out} \cup E_\mathsf{in} \cup E_{\vec c}$, let $U_e$ be a disk centered at $e$. We assume that the radii of these disks are small enough so that $\overline U_{e_1}\cap \overline U_{e_2}=\varnothing$ for $e_1\neq e_2$. We also assume that $\overline U_e\subset D_i^-$ when $e\in E_\mathsf{out}$. 

Now, let $e_0,e_1$ be the $j$-th pair of two consecutive points from $\big(E_\mathsf{in}\cup E_{\vec c}\big)\cap\big[a_{\vec c,i},b_{\vec c,i}\big]$. We choose arcs $\Gamma_{ij}^\pm$ incident with $e_0$ and $e_1$ and lying in the upper ($+$) and lower $(-)$ half-planes in the following way: if $e_k\in E_{\vec c}$, then it should hold that
\begin{equation}
\label{select1}
\zeta_{e_k}\big(\Gamma_{ij}^\pm\cap U_{e_k}\big) \subset I_\pm,
\end{equation}
where the rays $I_\pm$ are defined in \eqref{rays} and $\zeta_{e_k}$ is a certain conformal function in $U_{e_k}$ constructed further below in \eqref{zeta-vec-1} or \eqref{zeta-vec-2} (depending on the considered case); if $e_k\in E_\mathsf{in}$, it should hold that
\begin{equation}
\label{select2}
\zeta_{e_k}\big(\Gamma_{ij+k-1}^\pm\cap U_{e_k}\big) \subset I_\pm \quad \text{and} \quad \zeta_{e_k}\big(\Gamma_{ij+k}^\pm\cap U_{e_k}\big) \subset J_\pm,
\end{equation}
where $\zeta_{e_k}$ is a conformal function in $U_{e_k}$ constructed further below in \eqref{zeta-vec-5} and the rays $J_\pm$ are also defined in \eqref{rays}. Outside $U_{e_0}\cup U_{e_1}$ we choose $\Gamma_{ij}^\pm$ to be  segments joining the corresponding points on $\partial U_{e_0}$ and $\partial U_{e_1}$, see Figure~\ref{fig:2}. We further set $\Gamma_i^\pm:=\bigcup_j\Gamma_{ij}^\pm$.

Since the geometry of the problem might depend on each particular index $\vec n$ (and not only on $\vec c$), we construct in a similar fashion arcs $\Gamma_{\vec n,ij}^\pm$ and $\Gamma_{\vec n,i}^\pm$, where this time the maps $\zeta_{e_k}$ are replaced by $\zeta_{\vec n,e_k}$, see \eqref{zeta-vecn-5}, \eqref{zeta-vecn-1}, \eqref{zeta-vecn-2}, or \eqref{zeta-vecn-4}. As we show later in \eqref{zeta-asymp}, the arcs $\Gamma_{\vec n,i}^\pm$ converge to $\Gamma_i^\pm$ in Hausdorff metric. Finally, we denote by $\Omega_{\vec n,ij}^\pm$ the domains delimited by $\Gamma_{\vec n,ij}^\pm$ and $\big[a_{\vec n,i},b_{\vec n,i}\big]$, and set $\Omega_{\vec n,i}^\pm:=\bigcup_j\Omega_{\vec n,ij}^\pm$.
\begin{figure}[!ht]
\centering
\includegraphics[scale=.5]{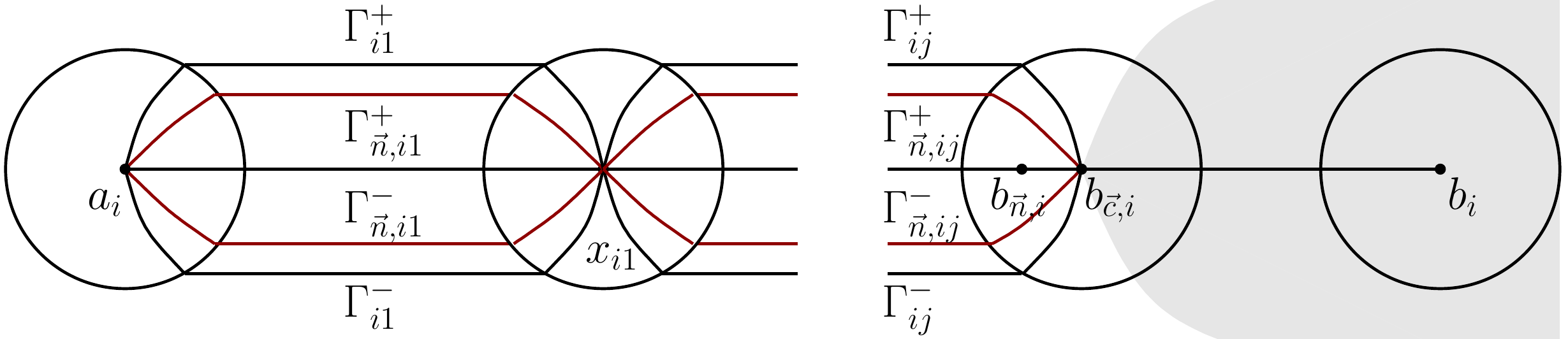}
\caption{\small The arcs $\Gamma_{ij}^\pm$ and $\Gamma_{\vec n,ij}^\pm$ in the case where there is at least one point in $E_\mathsf{in}$, $b_{\vec n,i}<b_{\vec c,i}<b_i$, and $b_i\in E_\mathsf{out}$.}
\label{fig:2}
\end{figure} 

Fix $\Gamma_{\vec n,il}^\pm$ with endpoints $e_1<e_2$. There exists an index $k$ such that $x_{ij}\leq e_1$ for $j< k$ and $x_{ij}\geq e_2$ for $j\geq k$. Then it follows from \eqref{rho_rs} and \eqref{rho_s} that the function $\rho_i$ holomorphically extends to $\Omega_{\vec n,il}^\pm$ by
\begin{equation}
\label{rhoi-ul}
\rho_i(z) := \rho_{\mathsf{r},i}(z)\prod_{j<k}\beta_{ij}\prod_{j<k}(z-x_{ij})^{\alpha_{ij}}\prod_{j\geq k}(x_{ij}-z)^{\alpha_{ij}},
\end{equation}
where $(z-x_{ij})^{\alpha_{ij}}$ is holomorphic off $(-\infty,x_{ij}]$ and $(x_{ij}-z)^{\alpha_{ij}}$ is holomorphic off $[x_{ij},\infty)$. Using these extensions, set
\begin{equation}
\label{eq:x}
\boldsymbol X:= \boldsymbol Y \left\{
\begin{array}{ll}
\mathsf{T}_i\left(\begin{matrix} 1 & 0 \\ \mp1/\rho_i & 1 \end{matrix}\right) & \mbox{in} \quad \Omega_{\vec n,i}^{\pm}, \medskip \\
\boldsymbol I & \text{otherwise},
\end{array}
\right.
\end{equation}
where $\boldsymbol Y$ is a matrix-function that solves \hyperref[rhy]{\rhy} (if it exists). It can be readily verified that ${\boldsymbol X}$ solves the following Riemann-Hilbert problem (\rhx):
\begin{itemize}
\label{rhx}
\item[(a)] ${\boldsymbol X}$ is analytic in $\C\setminus\bigcup_{i=1}^p\left([a_i,b_i]\cup\Gamma_{\vec n,i}^+\cup\Gamma_{\vec n,i}^-\right)$ and $\displaystyle \lim_{z\to\infty} {\boldsymbol X}(z)z^{-\sigma(\vec n)} = {\boldsymbol I}$;
\item[(b)] ${\boldsymbol X}$ has continuous traces on $\bigcup_{i=1}^p\left((a_i,b_i)\cup\Gamma_{\vec n,i}^+\cup\Gamma_{\vec n,i}^-\right)$ that satisfy
\[
{\boldsymbol X}_+={\boldsymbol X}_- \left\{
\begin{array}{rl}
\displaystyle \mathsf{T}_i\left(\begin{matrix} 0 & \rho_i \\ -1/\rho_i & 0 \end{matrix}\right) & \text{on} \quad \big[a_{\vec c,i},b_{\vec c,i}\big],  \medskip \\
\displaystyle \mathsf{T}_i\left(\begin{matrix} 1 & \rho_i \\ 0 & 1 \end{matrix}\right) & \text{on} \quad (a_i,b_i)\setminus\big[a_{\vec c,i},b_{\vec c,i}\big],  \medskip \\
\displaystyle \mathsf{T}_i\left(\begin{matrix} 1 & 0 \\ 1/\rho_i & 1 \end{matrix}\right) & \text{on} \quad \Gamma_{\vec n,i}^+\cup\Gamma_{\vec n,i}^-;
\end{array}
\right.
\]
\item[(c)] $\boldsymbol X$ has the following behavior near $e\in E_{\vec c}\cup E_\mathsf{in} \cup E_\mathsf{out}$: 
\begin{itemize}
\item if $e\in E_\mathsf{out}$, then $\boldsymbol X$ satisfies \hyperref[rhy]{\rhy}(c) with $\boldsymbol Y$ replaced by $\boldsymbol X$;
\item if $e\in E_{\vec c}\setminus\{x_{ij}\}$, then all the entries of $\boldsymbol X$ are bounded at $e$;
\item if $e\in E_\mathsf{in}$ or  $e\in E_{\vec c}\cap\{x_{ij}\}$, then $\boldsymbol X$ satisfies \hyperref[rhy]{\rhy}(c) with $\boldsymbol Y$ replaced by $\boldsymbol X$ outside of $\overline{\Omega_{\vec n,i}^+\cup\Omega_{\vec n,i}^-}$ while inside it behaves exactly as in \hyperref[rhy]{\rhy}(c) when $\alpha_{ij}<0$, the entries of the first and $(i+1)$-st column behave like $\mathcal{O}(\psi_0(z-x_{ij}))$ and the rest of the entries are bounded when $\alpha_{ij}=0$, and the entries of the first column behave like $\mathcal{O}(\psi_{-\alpha_{ij}}(z-x_{ij}))$ and the rest of the entries are bounded when $\alpha_{ij}>0$.
\end{itemize}
\end{itemize}

Due to the block structure of the jumps in \hyperref[rhy]{\rhy}(b), \cite[Lemma~17]{BY10} can be carried over word for word to the present case to prove:

\begin{lemma}
\label{lem:rhx}
\hyperref[rhx]{\rhx} is solvable if and only if \hyperref[rhy]{\rhy} is solvable. When solutions of \hyperref[rhx]{\rhx} and \hyperref[rhy]{\rhy} exist, they are unique and connected by \eqref{eq:x}.
\end{lemma}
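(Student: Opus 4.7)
The plan is to verify directly that the piecewise multiplicative transformation in \eqref{eq:x} establishes a bijection between solutions of \hyperref[rhy]{\rhy} and \hyperref[rhx]{\rhx}. The lens multiplier has determinant $1$, equals $\boldsymbol I$ off $\bigcup_{i,\pm}\Omega_{\vec n,i}^{\pm}$, and is holomorphic inside each lens region by the holomorphic extension \eqref{rhoi-ul} of $\rho_i$, so the map $\boldsymbol Y\mapsto \boldsymbol X$ is invertible, with inverse given by multiplication by the sign-reversed triangular factor in $\Omega_{\vec n,i}^{\pm}$. It thus suffices to check that $\boldsymbol Y$ satisfying \hyperref[rhy]{\rhy} yields $\boldsymbol X$ satisfying \hyperref[rhx]{\rhx}; the reverse implication then follows from running the argument backwards, and uniqueness is inherited from Proposition~\ref{prop:rhy}.

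For the forward check, \hyperref[rhx]{\rhx}(a) is immediate, since analyticity off the enlarged contour follows from analyticity of both $\boldsymbol Y$ and the multiplier inside each $\Omega_{\vec n,i}^{\pm}$, and the normalization at infinity is preserved because the multiplier reduces to $\boldsymbol I$ outside the lenses. On each arc of $\Gamma_{\vec n,i}^{\pm}$ the matrix $\boldsymbol Y$ has no jump, and the one-sided limits of the multiplier from the lens side and the exterior side combine with the chosen orientation to produce the triangular jump in \hyperref[rhx]{\rhx}(b). On $(a_{\vec c,i},b_{\vec c,i})$ the required anti-diagonal jump emerges from the standard factorization
\[
\mathsf{T}_i\!\left(\begin{matrix} 1 & \rho_i \\ 0 & 1 \end{matrix}\right) = \mathsf{T}_i\!\left(\begin{matrix} 1 & 0 \\ 1/\rho_i & 1 \end{matrix}\right)\mathsf{T}_i\!\left(\begin{matrix} 0 & \rho_i \\ -1/\rho_i & 0 \end{matrix}\right)\mathsf{T}_i\!\left(\begin{matrix} 1 & 0 \\ 1/\rho_i & 1 \end{matrix}\right),
\]
whose two outer triangular factors are absorbed into the boundary values of the lens multiplier; on the portions of $(a_i,b_i)$ outside the lens region the jump of $\boldsymbol Y$ is inherited verbatim.

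The main obstacle will be the endpoint analysis in \hyperref[rhx]{\rhx}(c). At $e\in E_{\mathsf{out}}$, the condition $\overline U_e\subset D_i^{-}$ ensures that the lens does not reach $e$, so $\boldsymbol X$ inherits the behavior of $\boldsymbol Y$. At $e\in E_{\vec c}\setminus\{x_{ij}\}$ the factor $1/\rho_i$ is bounded and non-vanishing near $e$, so entries of $\boldsymbol X$ that were bounded for $\boldsymbol Y$ stay bounded. The delicate case is $e\in E_{\mathsf{in}}\cup(E_{\vec c}\cap\{x_{ij}\})$: inside the lens, the multiplier adds $\mp 1/\rho_i$ times the $(i+1)$-st column of $\boldsymbol Y$ into its first column, and one must track how the $\psi_{\alpha_{ij}}$-behavior of the $(i+1)$-st column of $\boldsymbol Y$ interacts with the $|z-x_{ij}|^{-\alpha_{ij}}$ blow-up of $1/\rho_i$. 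A case split by the sign of $\alpha_{ij}$ yields exactly the $\psi_{-\alpha_{ij}}$, $\psi_0$, and bounded behavior in the first column of $\boldsymbol X$ predicted by \hyperref[rhx]{\rhx}(c), while the remaining columns are unaffected. With all clauses verified, the reverse direction is the same calculation read backwards, and uniqueness for \hyperref[rhx]{\rhx} follows from the bijection together with Proposition~\ref{prop:rhy}.
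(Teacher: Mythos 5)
Your overall strategy---checking directly that the determinant-one, piecewise holomorphic factor in \eqref{eq:x} carries solutions of \hyperref[rhy]{\rhy} to solutions of \hyperref[rhx]{\rhx} and back---is exactly the argument the paper invokes (it simply cites \cite[Lemma~17]{BY10} and observes that the block structure lets it carry over), and your forward direction is sound: the triangular factorization you display is correct, and your case split in the sign of $\alpha_{ij}$ at points of $E_\mathsf{in}\cup\big(E_{\vec c}\cap\{x_{ij}\}\big)$ reproduces \hyperref[rhx]{\rhx}(c).

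The gap is the sentence claiming that ``the reverse direction is the same calculation read backwards.'' Condition (c) does not transform symmetrically. Reconstructing $\boldsymbol Y=\boldsymbol X\,\mathsf{T}_i\left(\begin{smallmatrix}1&0\\ \pm 1/\rho_i&1\end{smallmatrix}\right)$ in $\Omega_{\vec n,i}^{\pm}$ adds $\pm(1/\rho_i)$ times the $(i+1)$-st column of $\boldsymbol X$ to its first column; using \hyperref[rhx]{\rhx}(c) this yields only $\mathcal{O}\big(|z-x_{ij}|^{-\alpha_{ij}}\big)$ for the first column when $\alpha_{ij}>0$ (and $\mathcal{O}(\log|z-x_{ij}|)$ when $\alpha_{ij}=0$), whereas \hyperref[rhy]{\rhy}(c) demands that all entries outside the $(i+1)$-st column stay \emph{bounded}. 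To close this you must argue separately: the first column of the reconstructed $\boldsymbol Y$ has no jump across $(a_i,b_i)$, hence extends holomorphically to a punctured neighborhood of $x_{ij}$; for $\alpha_{ij}<1$ and in the logarithmic case the growth bound then makes the singularity removable, while for $\alpha_{ij}\geq 1$ one also needs that $\boldsymbol Y=\boldsymbol X$ is bounded in the sectors adjacent to $x_{ij}$ lying outside the lens, which excludes a pole. Without this step \hyperref[rhy]{\rhy}(c) is not established for the reconstructed matrix, and that clause is precisely what forces its first row to be $Q_{\vec n}$ and the Cauchy transforms $R_{\vec n}^{(i)}$ via Proposition~\ref{prop:rhy}; your uniqueness claim for \hyperref[rhx]{\rhx} also rests on this reverse implication, so the gap propagates there. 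Note that in this paper the reverse direction is the one that carries the weight: with complex densities, solvability of \hyperref[rhy]{\rhy} (equivalently $\deg Q_{\vec n}=|\:\vec n\:|$) is \emph{deduced} from solvability of \hyperref[rhx]{\rhx}, not assumed.
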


\subsection{Auxiliary Parametrices}
\label{sec:bulk}

To solve \hyperref[rhx]{\rhx}, we construct parametrices that asymptotically describe the behavior of $\boldsymbol X$ away from and around each point in $E_\mathsf{in} \cup E_\mathsf{out} \cup E_{\vec c}$. To this end, we construct a matrix-valued function $\boldsymbol{N}$ that solves the following Riemann-Hilbert problem (\rhn):
\begin{itemize}
\label{rhn}
\item[(a)] ${\boldsymbol N}$ is analytic in $\C\setminus\bigcup_{i=1}^p\big[a_{\vec n,i},b_{\vec n,i}\big]$ and $\displaystyle \lim_{z\to\infty} {\boldsymbol N}(z)z^{-\sigma(\vec n)} = {\boldsymbol I}$;
\item[(b)] ${\boldsymbol N}$ has continuous traces on $\big(a_{\vec n,i},b_{\vec n,i}\big)$ that satisfy $\boldsymbol N_+ = \boldsymbol N_-\displaystyle \mathsf{T}_i\left(\begin{matrix} 0 & \rho_i \\ -1/\rho_i & 0 \end{matrix}\right)$.
\end{itemize}

Let $\Phi_{\vec n}$ be the functions from Proposition~\ref{prop:phi-vecn} while $S_{\vec n}$ and $\Upsilon_{\vec n,i}$, $i\in\{1,\ldots,p\}, $ be the functions introduced in Section~\ref{sec:aux}.  Set
\begin{equation}
\label{eq:n}
\boldsymbol N := \boldsymbol {CMD},
\end{equation}
where $\boldsymbol D:=\diag\left(\Phi_{\vec n}^{(0)},\ldots,\Phi_{\vec n}^{(p)}\right)$, $\boldsymbol C:=\diag\left(C_{\vec n,0},\ldots,C_{\vec n,p}\right)$ with the constant $C_{\vec n,k}$ defined by
\begin{equation}
\label{Cnk}
\left\{
\begin{array}{lcl}
\displaystyle \lim_{z\to\infty}C_{\vec n,0}\big(S_{\vec n}\Phi_{\vec n}\big)^{(0)}(z)z^{-|\:\vec n\:|}&=& 1 \medskip \\
\displaystyle \lim_{z\to\infty}C_{\vec n,i}\big(S_{\vec n}\Phi_{\vec n}\big)^{(i)}(z)z^{n_i}&=& 1, \quad i\in\{1,\ldots,p\},
\end{array}
\right.
\end{equation}
and the matrix $\boldsymbol M$ is given by
\begin{equation}
\label{M}
\boldsymbol M:=\left(\begin{array}{cccc}
S_{\vec n}^{(0)} & S_{\vec n}^{(1)}/w_{\vec n,1} & \cdots & S_{\vec n}^{(p)}/w_{\vec n,p} \medskip\\
\big( S_{\vec n}\Upsilon_{\vec n,1}\big)^{(0)} & \big( S_{\vec n}\Upsilon_{\vec n,1}\big)^{(1)}/w_{\vec n,1} & \cdots &\big( S_{\vec n}\Upsilon_{\vec n,1}\big)^{(p)}/w_{\vec n,p} \smallskip \\
\vdots & \vdots & \ddots & \vdots \\
\big( S_{\vec n}\Upsilon_{\vec n,p}\big)^{(0)} & \big( S_{\vec n}\Upsilon_{\vec n,p}\big)^{(1)}/w_{\vec n,1} & \cdots & \big( S_{\vec n}\Upsilon_{\vec n,p}\big)^{(p)}/w_{\vec n,p}
\end{array}\right).
\end{equation}
Then \eqref{eq:n} solves \hyperref[rhn]{\rhn}. Indeed, \hyperref[rhn]{\rhn}(a) follows immediately from the analyticity properties of $S_{\vec n}$, $\Upsilon_{\vec n,i}$, and $\Phi_{\vec n}$ as well as from \eqref{Cnk}. Observe that the multiplication by
\[
\mathsf{T}_i\left(\begin{matrix} 0 & \rho_i \\ -1/\rho_i & 0 \end{matrix}\right)
\]
on the right replaces the first column by the $(i+1)$-st one multiplied by $\rho_i$, while $(i+1)$-st column is replaced by the first one multiplied by $-1/\rho_i$. Hence, \hyperref[rhn]{\rhn}(b) follows from the analog of \eqref{SrhoJumps} for $S_{\vec n}$ and the fact that any rational function $\Psi$ on $\RS_{\vec n}$ satisfies $\Psi_\pm^{(0)}=\Psi_\mp^{(i)}$ on $\big(a_{\vec n,i},b_{\vec n,i}\big)$. 

Since the jump matrices in \hyperref[rhn]{\rhn}(b) have determinant 1, $\det(\boldsymbol{N})$ is a holomorphic function in $\overline\C\setminus\bigcup_i\big\{a_{\vec n,i},b_{\vec n,i}\big\}$ and $\det(\boldsymbol{N})(\infty)=1$. Moreover, it follows from the analogs of \eqref{SrhoCorners} and \eqref{SrhoMiddles} for $S_{\vec n}$ that each entry of the first column of $\boldsymbol N$ behaves like
\[
\mathcal{O}\left(|z-e|^{-(2\alpha+1)/4}\right) \quad \text{and} \quad \mathcal{O}\left(|z-x_{ij}|^{-(\alpha_{ij}\mp\arg(\beta_{ij})/\pi)/2}\right)
\]
for $e\in\big\{a_{\vec n,i},b_{\vec n,i}\big\}$ ($\alpha=\alpha_{ij}$ if $e=x_{ij}$ and $\alpha=0$ otherwise) and for $x_{ij} \in \big(a_{\vec n,i},b_{\vec n,i}\big)$ ($\pm\im(z)>0$), respectively, the entries of the $(i+1)$-st column behave like
\[
\mathcal{O}\left(|z-e|^{(2\alpha-1)/4}\right) \quad \text{and} \quad \mathcal{O}\left(|z-x_{ij}|^{(\alpha_{ij}\mp\arg(\beta_{ij})/\pi)/2}\right)
\]
there, and the rest of the entries are bounded. Thus, the determinant has at most square root singularities at these points and therefore is a bounded entire function. That is, $\det(\boldsymbol{N})\equiv1$ as follows from the normalization at infinity.

Further, for each $e\in E_\mathsf{in} \cup E_\mathsf{out} \cup E_{\vec c}$, we want to solve \hyperref[rhx]{\rhx} locally in $U_e$. That is, we are seeking a solution of the following \rhp:
\begin{itemize}
\label{rhp}
\item[(a,b,c)] $\boldsymbol P_e$ satisfies  \hyperref[rhx]{\rhx}(a,b,c) within $U_e$;
\item[(d)] $\boldsymbol P_e=\boldsymbol M\left(\boldsymbol I+\boldsymbol{\mathcal{O}}(\varepsilon_{e,\vec n})\right)\boldsymbol D$ uniformly on $\partial U_e\setminus\left([a_i,b_i]\cup\bigcup_{i=1}^p\Gamma_{\vec n,i}^+\cup\Gamma_{\vec n,i}^-\right)$, where $0<\varepsilon_{e,\vec n}\to0$ as $|\:\vec n\:|\to\infty$.
\end{itemize}

Since the construction of $\boldsymbol P_e$ solving \rhp~ is rather lengthy, it is carried out separately in Section~\ref{sec:Local} further below. 
 
\subsection{Final R-H Problem}

Denote by $\Omega_{\vec n,ij}$ the domain delimited by $\Gamma_{\vec n,ij}^+$ and $\Gamma_{\vec n,ij}^-$ (in particular, $\Omega_{\vec n,ij}^\pm\subset\Omega_{\vec n,ij}$). Set $\Omega_{\vec n}:=\bigcup_{ij}\Omega_{\vec n,ij}$ and $U:=\bigcup_{e\in E_\mathsf{in} \cup E_\mathsf{out} \cup E_{\vec c}} U_e$.
Define
\[
\Sigma_{\vec n} := \partial U \cup \left[\bigcup_{i=1}^p\left(\Gamma_{\vec n,i}^+\cup\Gamma_{\vec n,i}^-\right) \setminus U\right] \cup \left[\bigcup_{i=1}^p[a_i,b_i] \setminus \big(U\cup \Omega_{\vec n}\big)\right] .
\]
Moreover, we define $\Sigma$ by replacing $\Gamma_{\vec n,i}^\pm$ with $\Gamma_i^\pm$ in the definition of $\Sigma_{\vec n}$ see Figure~\ref{fig:3}. 
\begin{figure}[!ht]
\centering
\includegraphics[scale=.5]{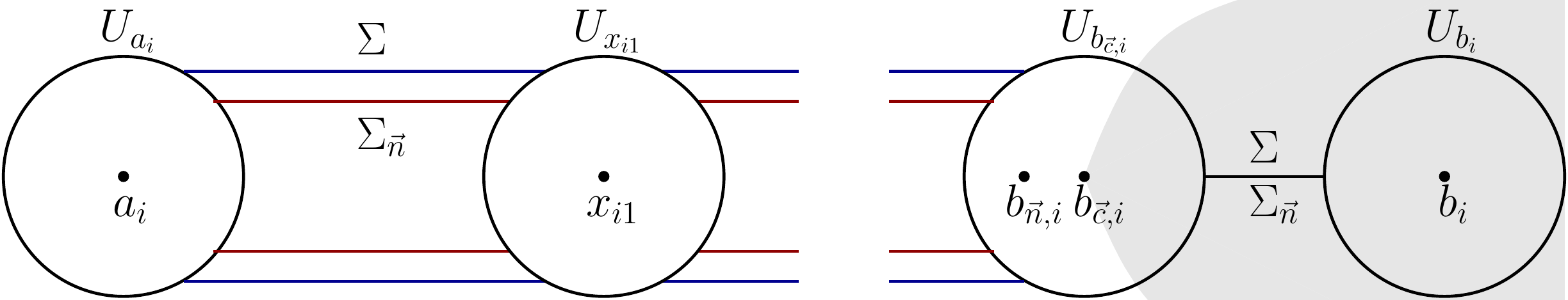}
\caption{\small Contours $\Sigma$ (black and blue lines) and $\Sigma_{\vec n}$ (black and red lines).}
\label{fig:3}
\end{figure} 
Given matrices $\boldsymbol N$ and $\boldsymbol P_e$, $e\in E_\mathsf{in} \cup E_\mathsf{out} \cup E_{\vec c}$, from the previous section, consider the following Riemann-Hilbert Problem (\rhr):
\begin{itemize}
\label{rhr}
\item[(a)] $\boldsymbol{Z}$ is a holomorphic matrix function in $\overline\C\setminus\Sigma_{\vec n}$ and $\boldsymbol{Z}(\infty)=\boldsymbol{I}$;
\item[(b)] $\boldsymbol{Z}$ has continuous traces on $\Sigma_{\vec n}$ that satisfy
\[
\boldsymbol{Z}_+  = \boldsymbol{Z}_- \left\{
\begin{array}{ll}
\boldsymbol{MD} \mathsf{T}_i\left(\begin{matrix} 1 & 0 \\ 1/\rho_i & 1 \end{matrix}\right) (\boldsymbol{MD})^{-1} & \text{on} \quad\left(\Gamma_{\vec n,i}^+\cup\Gamma_{\vec n,i}^-\right)\setminus \overline U, \medskip \\
\boldsymbol{MD} \mathsf{T}_i\left(\begin{matrix} 1 & \rho_i \\ 0 & 1 \end{matrix}\right) (\boldsymbol{MD})^{-1} & \text{on} \quad[a_i,b_i]\setminus\big(\overline{U\cup\Omega_{\vec n}}\big) \medskip \\
\boldsymbol P_e (\boldsymbol{MD})^{-1} & \text{on} \quad \partial U_e.
\end{array}
\right.
\]
\end{itemize}
Then the following lemma takes place.
\begin{lemma}
\label{lem:rhr}
The solution of \hyperref[rhr]{\rhr} exists for all $|\:\vec n\:|$ large enough and satisfies
\begin{equation}
\label{eq:r}
\boldsymbol{Z}=\boldsymbol{I} + \mathcal{O}\big( \varepsilon_{\vec n}\big)
\end{equation}
uniformly in $\overline\C$, where $\varepsilon_{\vec n}=\min_e\varepsilon_{e,\vec n}$.
\end{lemma}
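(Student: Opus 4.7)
The plan is to treat \hyperref[rhr]{\rhr} as a standard small-norm Riemann--Hilbert problem: once I show that the jump matrix on every piece of $\Sigma_{\vec n}$ differs from the identity by $\mathcal O(\varepsilon_{\vec n})$ uniformly (in the relevant $L^2\cap L^\infty$ sense), the existence of $\boldsymbol Z$ and the estimate \eqref{eq:r} follow from the usual contraction-mapping argument for the associated singular integral equation (see, e.g., the arguments in \cite{DKMLVZ99b} or \cite{BY10}). So the real content is to estimate each of the three families of jumps.

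First, on the circles $\partial U_e$, $e\in E_{\vec c}\cup E_{\mathsf{in}}\cup E_{\mathsf{out}}$, the defining jump is $\boldsymbol P_e(\boldsymbol{MD})^{-1}$, and by \hyperref[rhp]{\rhp}(d) this equals $\boldsymbol M(\boldsymbol I+\mathcal O(\varepsilon_{e,\vec n}))\boldsymbol M^{-1}$. To turn this into a bound of order $\varepsilon_{e,\vec n}$, I need uniform control of $\|\boldsymbol M\|$ and $\|\boldsymbol M^{-1}\|$ on $\partial U_e$. Boundedness of $\boldsymbol M$ is inherited from the local behaviour of $S_{\vec n}$ (analogs of \eqref{SrhoCorners}--\eqref{SrhoMiddles}) together with the fact that $\Upsilon_{\vec n,i}$ and $w_{\vec n,i}$ stay bounded away from $0$ and $\infty$ on each $\partial U_e$ once the radii are fixed; boundedness of $\boldsymbol M^{-1}$ then follows from $\det\boldsymbol N\equiv 1$ (equivalently, $\det\boldsymbol M$ is bounded away from zero on these circles). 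The constants are uniform in $\vec n$ thanks to the convergence statements \eqref{local-un2} and \eqref{local-un1}.

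Next, on a lens arc $\Gamma^\pm_{\vec n,i}\setminus\overline U$, the jump is conjugation of $\mathsf T_i\bigl(\begin{smallmatrix}1&0\\1/\rho_i&1\end{smallmatrix}\bigr)$ by $\boldsymbol{MD}$, which, using the diagonal form of $\boldsymbol D$, produces a matrix of the form $\boldsymbol I+(\Phi^{(i)}_{\vec n}/\Phi^{(0)}_{\vec n})\cdot\bigl(\text{bounded}\bigr)/\rho_i$. By \eqref{Phi-ratio} and the strict inequality in \eqref{EqPropi} on $[a_{\vec c,i},b_{\vec c,i}]$ (extended to a complex neighbourhood via harmonicity), this ratio is uniformly exponentially small in $|\vec n|$ on every compact subset of the lens away from $E_{\vec c}\cup E_{\mathsf{in}}$; once we stay a fixed distance $\delta$ from $\overline U$, the exponential smallness dominates any $\vec n$-independent bound on $1/\rho_i$ and on the entries of $\boldsymbol M\boldsymbol M^{-1}$. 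An entirely analogous argument, using instead the pushing inequality in \eqref{EqPropi} on $[a_i,b_i]\setminus[a_{\vec c,i},b_{\vec c,i}]$ together with the convergence $[a_{\vec n,i},b_{\vec n,i}]\to[a_{\vec c,i},b_{\vec c,i}]$ from Proposition~\ref{prop:phi-vecn}, shows that the jumps on the pushed real segments $[a_i,b_i]\setminus(\overline{U\cup\Omega_{\vec n}})$ are $\boldsymbol I+\mathcal O(e^{-c|\vec n|})$. Both families of jumps are therefore dominated by $\varepsilon_{\vec n}$.

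With these three estimates combined, $\|\boldsymbol J_{\boldsymbol Z}-\boldsymbol I\|_{L^\infty\cap L^2(\Sigma_{\vec n})}=\mathcal O(\varepsilon_{\vec n})$; the total arc-length of $\Sigma_{\vec n}$ is uniformly bounded, so the $L^2$ norm is controlled by the $L^\infty$ norm. Standard Riemann--Hilbert small-norm theory (the operator $\boldsymbol I-\mathcal C_{\boldsymbol J_{\boldsymbol Z}-\boldsymbol I}$ is invertible on $L^2(\Sigma_{\vec n})$ for $|\vec n|$ large, where $\mathcal C$ is the Cauchy projection) then produces $\boldsymbol Z$ and the bound \eqref{eq:r} uniformly on $\overline{\C}$. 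The main technical obstacle I anticipate is uniformity near the matching circles: one must check that the exponential smallness on the lens and on the pushed arcs survives with a constant $c>0$ independent of $\vec n$ all the way up to $\partial U_e$, which rests on the uniform convergence in Proposition~\ref{prop:phi-vecn} together with Proposition~\ref{prop:derivative} (to control the local slope of $\ell_i-V^{\omega_i+\omega}$ through the zero structure of $h_{\vec n}$); once this uniformity is in hand, the remainder of the argument is routine.
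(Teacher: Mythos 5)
Your proposal is correct and follows essentially the same route as the paper: the same three jump estimates (the $\mathcal{O}(\varepsilon_{e,\vec n})$ bound on $\partial U_e$ via \hyperref[rhp]{\rhp}(d), $\det\boldsymbol M\equiv1$ and \eqref{local-un1}--\eqref{local-un2}, and the exponential smallness on the lens arcs and on $[a_i,b_i]\setminus\big(\overline{U\cup\Omega_{\vec n}}\big)$ via \eqref{Phi-ratio}, \eqref{Dipm}, \eqref{EqPropi}), followed by the standard small-norm argument. The only difference is cosmetic: the paper first uses analyticity of $\rho_i$ and the exponential smallness of the jumps in $D_i^+$ to move the problem from $\Sigma_{\vec n}$ to the fixed contour $\Sigma$ before invoking small-norm theory, whereas you work on $\Sigma_{\vec n}$ directly and implicitly rely on uniform (in $\vec n$) bounds for the Cauchy operator on the varying contours.
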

\begin{proof}
Analyticity of $\rho_i$ yields that $\boldsymbol Z$ can be analytically continued to be holomorphic  outside of $\Sigma$. To do that one simply needs to multiply $\boldsymbol Z$ by the first jump matrix in \hyperref[rhr]{\rhr}(b) or its inverse (the jump matrices have determinate 1 and therefore are invertible). We shall show that the jump matrices are locally uniformly geometrically small in $D_i^+$. This would imply that the new problem is solvable if and only if the initial problem is solvable and the bound \eqref{eq:r} remains valid regardless the contour. Hence, in what follows we shall consider \rhr~ on $\Sigma$ rather than on $\Sigma_{\vec n}$.

It was shown in Section~\ref{sec:bulk} that $\det(\boldsymbol N)\equiv1$. Moreover, it follows from \eqref{normalization} that $\det(\boldsymbol D)\equiv1$  while the equality $\prod_{k=0}^pS^{(k)}_{\vec n}\equiv1$ and \eqref{Cnk} imply that $\det(\boldsymbol C)\equiv1$. Hence, $\det(\boldsymbol M)\equiv1$ and it follows from \hyperref[rhp]{\rhp}(d), \eqref{local-un1}, and \eqref{local-un2} that
\[
\boldsymbol P_e (\boldsymbol{MD})^{-1} = \boldsymbol I + \boldsymbol M\mathcal{O}\big( \varepsilon_{e,\vec n}\big)\boldsymbol M^{-1} = \boldsymbol I+\mathcal{O}(\varepsilon_{e,\vec n})
\]
holds uniformly on each $\partial U_e$. On the other hand, it holds on $\Gamma_i^\pm\setminus \overline U$ that
\[
\boldsymbol {MD} \mathsf{T_i} \left(\begin{matrix} 1 & 0 \\ 1/\rho_i & 1 \end{matrix}\right) (\boldsymbol{MD})^{-1} = \boldsymbol I + \frac1{\rho_i}\frac{\Phi_{\vec n}^{(i)}}{\Phi_{\vec n}^{(0)}}\boldsymbol M\boldsymbol E_{i+1,1}\boldsymbol M^{-1} = \boldsymbol I+\mathcal{O}\left(C_i^{-|\:\vec n\:|}\right)
\]
for some constant $C_i>1$ by \eqref{Dipm}, \eqref{Phi-ratio}, and Proposition~\ref{prop:phi-vecn}. Analogously, we get that
\[
\boldsymbol {MD} \mathsf{T_i} \left(\begin{matrix} 1 & \rho_i \\ 0 & 1 \end{matrix}\right) (\boldsymbol{MD})^{-1} = \boldsymbol I + \rho_i\frac{\Phi_{\vec n}^{(0)}}{\Phi_{\vec n}^{(i)}}\boldsymbol M\boldsymbol E_{1,i+1}\boldsymbol M^{-1} = \boldsymbol I+\mathcal{O}\left(\widetilde C_i^{-|\:\vec n\:|}\right)
\]
on $[a_i,b_i]\setminus\big(\overline{U\cup\Omega_{\vec n}}\big)$ for some $\widetilde C_i>1$ by \eqref{Phi-ratio} and \eqref{EqPropi}. That is, all the jump matrices for $\boldsymbol Z$ asymptotically behave like $\boldsymbol I + \mathcal{O}\big( \varepsilon_{\vec n}\big)$ (as will be clear in Section~\ref{sec:Local}, the decay of $\varepsilon_{\vec n}$ is of power type and not exponential). The conclusion of the lemma follows from the same argument as in \cite[Corollary~7.108]{Deift}.
\end{proof}

\subsection{Proof of Theorem~\ref{thm:SA}}
\label{ssec:proof}
Let $\boldsymbol{Z}$ be the solution of \hyperref[rhr]{\rhr} granted by Lemma~\ref{lem:rhr}, $\boldsymbol P_e$ be solutions of \hyperref[rhp]{\rhp}, and $\boldsymbol N=\boldsymbol{CMD}$ be the matrix constructed in \eqref{eq:n}. Then it can be easily checked that
\begin{equation}
\label{X}
\boldsymbol X = \boldsymbol C\boldsymbol Z
\left\{
\begin{array}{rll}
\boldsymbol M\boldsymbol D & \text{in} & \C\setminus \big(\overline U\cup\bigcup \big[a_{\vec c,i}b_{\vec c,i}\big]\big), \medskip \\
\boldsymbol P_e & \text{in} & U_e, \quad e\in E_\mathsf{out} \cup E_\mathsf{in} \cup E_{\vec c},
\end{array}
\right.
\end{equation}
solves  \hyperref[rhx]{\rhx} for all $|\:\vec n\:|$ large enough. Given a closed set $K$ in $\overline\C\setminus\bigcup_{i=1}^p[a_i,b_i]$, we can always shrink the lens so that $K\subset \C\setminus \big(\overline{U\cup \Omega_{\vec n}}\big)$. In this case $\boldsymbol Y=\boldsymbol X$ on $K$ by Lemma~\ref{lem:rhx}. Write the first row of $\boldsymbol Z$ as $\left( 1+\upsilon_{\vec n,0}, \upsilon_{\vec n,1}, \ldots, \upsilon_{\vec n,p}\right)$. Then $(1,j+1)$-st entry of $\boldsymbol{ZM}$ is equal to
\[
\left(1 + \upsilon_{\vec n,0} + \sum_{i=1}^p \upsilon_{\vec n,i}\Upsilon_{\vec n,i}^{(j)} \right)S_{\vec n}^{(j)}/w_{\vec n,j} = \big(1 + \mathcal{O}(\varepsilon_{\vec n})\big)S_{\vec n}^{(j)}/w_{\vec n,j}
\]
by Lemma~\ref{lem:rhr} and \eqref{local-un2}, where $w_{\vec n,0}\equiv1$. Therefore, it follows from Proposition~\ref{prop:rhy} that
\[
\left\{
\begin{array}{lll}
Q_{\vec n} &=& C_{\vec n,0}\big[1 + \mathcal{O}(\varepsilon_{\vec n})\big]\big(S_{\vec n}\Phi_{\vec n}\big)^{(0)} \medskip \\
R_{\vec n}^{(j)} &=& C_{\vec n,0}\big[1 + \mathcal{O}(\varepsilon_{\vec n})\big]\big(S_{\vec n}\Phi_{\vec n}\big)^{(j)}/w_{\vec n,j}.
\end{array}
\right.
\]
Theorem~\ref{thm:SA} now follows from \eqref{local-un1}, since $C_{\vec n,0}=(1+o(1))C_{\vec n}$ again by \eqref{local-un1} and  $w_{\vec n,j}\to w_j$ uniformly on $K$.

\section{Local Riemann-Hilbert Analysis}
\label{sec:Local}

The goal of this section is to construct solutions to \hyperref[rhp]{\rhp}. 

\subsection{Local Parametrices around Points in $E_\mathsf{out}$}

Let $e\in E_\mathsf{out}$, see \eqref{Es}. A  solution of \hyperref[rhp]{\rhp} is given by
\begin{equation}
\label{PeC5}
\boldsymbol P_e := \boldsymbol M\mathsf{T}_i\left( \begin{matrix} 1 & \mathcal{C}_i\Phi_{\vec n}^{(0)}/\Phi_{\vec n}^{(i)} \\ 0 & 1 \end{matrix} \right)\boldsymbol D,
\end{equation}
where $\mathcal{C}_i(z) :=\frac{1}{2\pi\mathrm{i}}\int_{[a_i,b_i]}\frac{\rho_i(x)}{x-z}\mathrm{d}x$. Indeed, since the matrices $\boldsymbol M$ and $\boldsymbol D$ are holomorphic in $U_e$, and $\mathcal{C}_i$ has a jump only across $(a_i,b_i)\cap U_e$, the matrix above satisfies \hyperref[rhp]{\rhp}(a). As  $\big(\mathcal{C}_i^+-\mathcal{C}_i^-\big)(x)=\rho_i(x)$ for $x\in(a_i,b_i)\setminus \{x_{ij}\}$, \hyperref[rhp]{\rhp}(b) follows. \hyperref[rhp]{\rhp}(c) is a consequence of the fact that $\big|\mathcal{C}_i(z)(z-x_{ij})^{-\alpha_{ij}}\big|$ is bounded in the vicinity of $x_{ij}$ for $\alpha_{ij}<0$, \cite[Sec.~8.3]{Gakhov}. Finally, \hyperref[rhp]{\rhp}(d) is easily deduced from the inclusion $\overline U_e\subset D_i^- $, \eqref{Phi-ratio} and \eqref{EqPropi}.

\subsection{Local Parametrices around Points in $E_\mathsf{in}$}

The construction below is known \cite{Van03,FMMFSou10,FMMFSou11,DIKr11}.

\subsubsection{Conformal Maps}

Since $h$ is a rational function on $\RS$, it holds that $h^{(0)}_\pm = h^{(i)}_\mp$ on $\big(a_{\vec c,i},b_{\vec c,i}\big)\cap U_e$. Then
\begin{equation}
\label{zeta-vec-5}
\zeta_e(z) := \mathsf{sgn}\big(\im(z)\big)\mathrm{i}\int_e^z\left(h^{(0)}-h^{(i)}\right)(x)\mathrm{d}x, \quad \im(z)\neq0,
\end{equation}
extends to a conformal function in $U_e$ vanishing at $e$. Define $\zeta_{\vec n,e}$ exactly as in \eqref{zeta-vec-5} with $h$ replaced by $h_{\vec n}$. Then it holds that
\begin{equation}
\label{zeta-vecn-5}
\zeta_{\vec n,e}(z) = \frac{\mathsf{sgn}\big(\im(z)\big)\mathrm{i}}{|\:\vec n\:|}\log\left(\Phi_{\vec n}^{(0)}(z)/\Phi_{\vec n}^{(i)}(z)\right), \quad \im(z)\neq0,
\end{equation}
by \eqref{PhiInt}. It follows from \eqref{Phi-ratio} and \eqref{EqPropi} that $\zeta_{\vec n,e}$ is real on $\big(a_{\vec c,i},b_{\vec c,i}\big)\cap U_e$. Moreover, since $U_e\setminus\big(a_{\vec c,i},b_{\vec c,i}\big)\subset D_i^+$, $\zeta_{\vec n,e}$ maps upper half-plane into the upper half-plane. In particular, $\zeta_{\vec n,e}(x)>0$ for $x\in\big(e,b_{\vec c,i}\big)\cap U_e$. Observe also that
\begin{equation}
\label{zeta-asymp}
\zeta_{\vec n,e} \to \zeta_e
\end{equation}
holds uniformly on $\overline U_e$ by \eqref{Phi-ratio} since \eqref{Phi-ratio} is the statement about convergence of the imaginary parts of $\zeta_{\vec n,e}$ to the imaginary part of $\zeta_e$.

\subsubsection{Matrix $\boldsymbol P_e$}

It follows from the way we extended $\rho_i$ into $\Omega_{\vec n,i}^\pm$ that we can write
\begin{equation}
\label{rho_eX}
\rho_i(z) = \rho_{\mathsf{r},e}(z)\left\{\begin{array}{rl}(e-z)^\alpha, & \re(z)<e, \smallskip \\ \beta(z-e)^\alpha, & \re(z)>e, \end{array}\right.
\end{equation}
where $\rho_{\mathsf{r},e}(x)$ is a holomorphic and non-vanishing function in $U_e$. Define $r_e$ by 
\[
r_e(z) := \sqrt{\rho_{\mathsf{r},e}(z)}(z-e)^{\alpha/2},
\]
where the square root is principal. Then $r_e$ is a holomorphic and non-vanishing function in $U_e\setminus\{x:x<e\}$ that satisfies
\begin{equation}
\label{rho_eXB}
\left\{
\begin{array}{ll}
r_{e+}(x)r_{e-}(x) = \rho_i(x), & x\in \{x:x<e\} \cap U_e, \medskip \\
r_e^2(z) = \rho_i(z)e^{\pm\pi\mathrm{i}\alpha}, & z\in\Gamma_{\vec n,ij}^\pm\cap U_e, \medskip \\
r_e^2(x) = \beta^{-1}\rho_i(x), & \left(\Gamma_{\vec n,ij+1}^+\cup \Gamma_{\vec n,ij+1}^- \cup \{x:x>e\}\right)\cap U_e.
\end{array}
\right.
\end{equation}
It is a straightforward computation using \eqref{rho_eXB} and \eqref{zeta-vecn-5} to verify that \hyperref[rhp]{\rhp} is solved by
\[
\boldsymbol P_e := \boldsymbol E_e\mathsf{T}_i\left(\boldsymbol\Phi_{\alpha,\beta}\left(|\:\vec n\:|\zeta_{\vec n,e}\right) r_e^{-\sigma_3}\left(\Phi^{(0)}_{\vec n}/\Phi^{(i)}_{\vec n}\right)^{-\sigma_3/2}\right)\boldsymbol D,
\]
where $\boldsymbol\Phi_{\alpha,\beta}$ is the solution of \hyperref[rhphiAB]{\rhphiAB} and the holomorphic prefactor $\boldsymbol E_e$ chosen below to fulfill \hyperref[rhp]{\rhp}(d). 

\subsubsection{Holomorphic Prefactor $\boldsymbol E_e$}

It follows from the properties of the branch of $\big(\mathrm{i}\zeta\big)^{\log\beta\sigma_3/2\pi\mathrm{i}}$ that
\begin{equation}
\label{HP1}
\big(\mathrm{i}\zeta\big)_+^{\log\beta\sigma_3/2\pi\mathrm{i}} \boldsymbol B_+ = \big(\mathrm{i}\zeta\big)_-^{\log\beta\sigma_3/2\pi\mathrm{i}} \boldsymbol B_-
\left\{
\begin{array}{lll}
\left(\begin{matrix} 0 & 1 \\ -1 & 0 \end{matrix}\right) & \text{on} & (-\infty,0), \medskip \\
\left(\begin{matrix} 0 & \beta \\ -1/\beta & 0 \end{matrix}\right) & \text{on} & (0,\infty),
\end{array}
\right.
\end{equation}
and it is holomorphic in $\C\setminus(-\infty,\infty)$. Therefore, it follows from \hyperref[rhn]{\rhn}(b) that
\[
\boldsymbol E_e := \boldsymbol M\mathsf{T}_i\left(\big(\mathrm{i}|\:\vec n\:|\zeta_{\vec n,e}\big)^{\log\beta\sigma_3/2\pi\mathrm{i}}\boldsymbol B_\pm r_e^{-\sigma_3}\right)^{-1}, \quad \pm\im(z)>0,
\]
is holomorphic in $U_e\setminus\{e\}$. Since $|r_e(z)|\sim|z-e|^{\alpha/2}$ and $\big|\zeta^{\log\beta/2\pi\mathrm{i}}\big|\sim|\zeta|^{\arg(\beta)/2\pi}$, $\boldsymbol E_e$ is in fact holomorphic in $U_e$ as claimed. Clearly, in this case it holds that $\varepsilon_{\vec n,e}=|\:\vec n\:|^{|\arg(\beta)|/\pi-1}$.

\subsection{Hard Edge}

In this section we assume that $e\in E_{\vec c}$ and $e\not\in \partial D_i^-$.

\subsubsection{Conformal Maps}

It follows from Proposition~\ref{prop:derivative} that $b_{\vec c,i}=b_{\vec n,i}=b_i$ or $a_{\vec c,i}=a_{\vec n,i}=a_i$ for all $|\:\vec n\:|$ large in this case. Define
\begin{equation}
\label{zeta-vec-1}
\zeta_e(z) := \left(\frac14\int_e^z\left(h^{(0)}-h^{(i)}\right)(x)\mathrm{d}x\right)^2, \quad z\in U_e.
\end{equation}
Since $h_\pm^{(0)}=h_\mp^{(i)}$ on $(a_i,b_i)\cap U_e$, $\zeta_e$ is holomorphic in $U_e$. Moreover, since $h$ has a pole at $\boldsymbol e$ (the corresponding branch point of $\RS$), $\zeta_e$ has a simple zero at $e$. Thus, we can choose $U_e$ small enough so that $\zeta_e$ is conformal in $\overline U_e$.

Define $\zeta_{\vec n,e}$ as in \eqref{zeta-vec-1} with $h$ replaced by $h_{\vec n}$. The functions $\zeta_{\vec n,e}$ form a family of holomorphic functions in $U_e$, all having a simple zero at $e$. Moreover, \eqref{PhiInt} yields that
\begin{equation}
\label{zeta-vecn-1}
\zeta_{\vec n,e}(z) = \left(\frac1{4|\:\vec n\:|}\log\left(\Phi_{\vec n}^{(0)}/\Phi_{\vec n}^{(i)}\right)\right)^2, \quad z\in U_e,
\end{equation}
which, together with \eqref{Dipm} and \eqref{Phi-ratio}, implies that $\zeta_{\vec n,e}(x)$ is positive for $x\in\big(\R\setminus[a_i,b_i])\cap U_e$ and is negative $x\in (a_i,b_i)\cap U_e$ (this also can be seen from \eqref{MapInt} and \eqref{MapInt1}). 

Considering $h_{\vec n}$ and $h$ as defined on the same doubly circular neighborhood of $\boldsymbol e$ and recalling that their ratio converges to 1 on its boundary, we see that it converges to 1 uniformly throughout the neighborhood. The latter implies that \eqref{zeta-asymp} holds uniformly on $\overline U_e$. In particular, the functions $\zeta_{\vec n,e}$ are conformal in $\overline U_e$ for all $\vec n$ large.

\subsubsection{Matrix $\boldsymbol P_e$}

In this case we can write
\begin{equation}
\label{rho_e}
\rho_i(z)=\rho_{\mathsf{r},e}(z)\left\{\begin{array}{ll}(e-z)^\alpha, & e=b_i, \smallskip \\ (z-e)^\alpha, & e=a_i, \end{array}\right.
\end{equation}
where $\rho_{\mathsf{r},e}$ is non-vanishing and holomorphic in $U_e$, $\alpha>-1$, and the $\alpha$-roots are principal. Set
\begin{equation}
\label{r_i}
r_e(z) := \sqrt{\rho_{\mathsf{r},e}(z)}\left\{\begin{array}{ll}(z-e)^{\alpha/2}, & e=b_i, \smallskip \\ (e-z)^{\alpha/2}, & e=a_i, \end{array}\right.
\end{equation}
where the branches are again principal. Then $r_e$ is a holomorphic and non-vanishing function in $U_e\setminus[a_i,b_i]$ and satisfies
\begin{equation}
\label{r_i-prop}
\left\{
\begin{array}{ll}
r_{e+}(x)r_{e-}(x) = \rho_i(x), & x\in(a_i,b_i), \medskip \\
r_e^2(z) = \rho_i(z)e^{\pm\pi\mathrm{i}\alpha}, & z\in\Gamma_{\vec n,i}^\pm\cap U_e.
\end{array}
\right.
\end{equation}
Then \eqref{zeta-vecn-1} and \eqref{r_i-prop} imply that \hyperref[rhp]{\rhp} is solved by
\begin{equation}
\label{PeC1}
\boldsymbol P_e := \boldsymbol E_e \mathsf{T}_i\left(\boldsymbol\Psi_e\left(|\:\vec n\:|^2\zeta_{\vec n,e}\right) r_e^{-\sigma_3}\left(\Phi^{(0)}_{\vec n}/\Phi^{(i)}_{\vec n}\right)^{-\sigma_3/2}\right)\boldsymbol D,
\end{equation}
where $\boldsymbol \Psi_e:=\boldsymbol\Psi_\alpha$ when $e=b_i$ and $\boldsymbol \Psi_e:=\sigma_3\boldsymbol\Psi_\alpha\sigma_3$ when $e=a_i$, and $\boldsymbol \Psi_\alpha$ solves \hyperref[rhpsiA]{\rhpsiA}, while $\boldsymbol E_e$ is a holomorphic prefactor chosen so that \hyperref[rhp]{\rhp}(d) is fulfilled.

\subsubsection{Holomorphic Prefactor $\boldsymbol E_e$}

As $\zeta_+^{1/4}=\mathrm{i}\zeta^{1/4}_-$, it can be easily checked that
\[
\frac{\zeta_+^{-\sigma_3/4}}{\sqrt2}\left(\begin{matrix} 1 & \pm\mathrm{i} \\ \pm\mathrm{i} & 1 \end{matrix}\right) = \frac{\zeta_-^{-\sigma_3/4}}{\sqrt2}\left(\begin{matrix} 1 & \pm\mathrm{i} \\ \pm\mathrm{i} & 1 \end{matrix}\right) \left(\begin{matrix} 0 & \pm1 \\ \mp1 & 0 \end{matrix}\right)
\]
on $(-\infty,0)$. Then \hyperref[rhn]{\rhn}(b) implies that
\begin{equation}
\label{HP2}
\boldsymbol E_e := \boldsymbol M\mathsf{T}_i\left(\frac{(|\:\vec n\:|^2\zeta_{\vec n,e}\big)^{-\sigma_3/4}}{\sqrt2}\left(\begin{matrix} 1 & \pm\mathrm{i} \\ \pm\mathrm{i} & 1 \end{matrix}\right) r_e^{-\sigma_3}\right)^{-1}
\end{equation}
is holomorphic around in $U_e\setminus\{e\}$, where the sign $+$ is used around $e=b_i$ while the sign $-$ is used around $e=a_i$. Since $|r_e(z)|\sim|z-e|^{\alpha/2}$, $\boldsymbol E_e$ is in fact holomorphic in $U_e$ as desired. Clearly,  $\varepsilon_{\vec n,e}=|\:\vec n\:|^{-1}$ in this case.

\subsection{Soft-Type Edge I}

Below, we assume that $e\in E_{\vec c}$ and $b_{\vec n,i}\in\partial D_{\vec n,i}^-$ or $a_{\vec n,i}\in\partial D_{\vec n,i}^-$.

\subsubsection{Conformal Maps}

By the condition of this section, it holds that $e\in\partial D_i^-$. Define
\begin{equation}
\label{zeta-vec-2}
\zeta_e(z) := \left(-\frac34\int_e^z\left(h^{(0)}-h^{(i)}\right)(x)\mathrm{d}x\right)^{2/3}, \quad z\in U_e.
\end{equation}
Further, define $\zeta_{\vec n,e}$ exactly as $\zeta_e$ only with $h$ replaced by $h_{\vec n}$ and $e$ replaced by $b_{\vec n,i}$ if $e=b_{\vec c,i}$ and by $a_{\vec n,i}$ if $e=a_{\vec c,i}$. It follows from \eqref{PhiInt} that
\begin{equation}
\label{zeta-vecn-2}
\zeta_{\vec n,e}(z) = \left(-\frac3{4|\:\vec n\:|}\log\left(\Phi_{\vec n}^{(0)}(z)/\Phi_{\vec n}^{(i)}(z)\right)\right)^{2/3}, \quad z\in U_e.
\end{equation}
Analysis in \eqref{MapInt} and \eqref{MapInt2} yields that these functions are conformal in $\overline U_e$ (make the radius smaller if necessary), are positive on $\big(\R\setminus\big[a_{\vec n,i},b_{\vec n,i}\big])\cap U_e$ and negative on $\big(a_{\vec n,i},b_{\vec n,i})\cap U_e$. Moreover, \eqref{zeta-asymp} holds as well.

\subsubsection{Matrix $\boldsymbol P_e$}

If $e=x_{ij}$ for some $j\in\{1,\ldots,J_i-1\}$, set $\alpha:=\alpha_{ij}$ and $\beta:=\beta_{ij}$ when $e=b_{\vec c,i}$ or $\beta:=1/\beta_{ij}$ when $e=a_{\vec c,i}$, see \eqref{rho_s}; if $e\not\in\{x_{ij}\}_{j=1}^{J_i-1}$ and $e\in(a_i,b_i)$, set $\alpha=0$ and $\beta=1$; if $e=a_i$, set $\alpha=\alpha_{i0}$ and $\beta=0$; if $e=b_i$, set $\alpha=\alpha_{iJ_i}$ and $\beta=0$. It follows from the way we extended $\rho_i$ into $\Omega_{\vec n,i}^\pm$ that
\begin{equation}
\label{rho_eA}
\rho_i(z) = \rho_{\mathsf{r},e}(z)\left\{\begin{array}{ll}(e-z)^\alpha, & e=b_{\vec c,i}, \smallskip \\ (z-e)^\alpha, & e=a_{\vec c,i}, \end{array}\right.
\end{equation}
for $\re(z)\in\big(a_{\vec c,i},b_{\vec c,i}\big)$ and
\begin{equation}
\label{rho_eB}
\rho_i(z) = \beta\rho_{\mathsf{r},e}(z)\left\{\begin{array}{ll}(z-e)^\alpha, & e=b_{\vec c,i}, \smallskip \\ (e-z)^\alpha, & e=a_{\vec c,i}, \end{array}\right.
\end{equation}
for $\re(z)\not\in\big[a_{\vec c,i},b_{\vec c,i}\big]$, where all the branches are principal. Define $r_e$ by \eqref{r_i} with $b_i$ and $a_i$ replaced by $b_{\vec c,i}$ and $a_{\vec c,i}$. Then $r_e$ is a holomorphic and non-vanishing function in $U_e\setminus\big[a_{\vec c,i},b_{\vec c,i}\big]$ that satisfies
\begin{equation}
\label{r_eprop2}
\left\{
\begin{array}{ll}
r_{e+}(x)r_{e-}(x) = \rho_i(x), & x\in\big(a_{\vec c,i},b_{\vec c,i}\big)\cap U_e, \medskip \\
r_e^2(z) = \rho_i(z)e^{\pm\pi\mathrm{i}\alpha}, & z\in\Gamma_{\vec n,i}^\pm\cap U_e, \medskip \\
r_e^2(x) = \beta^{-1}\rho_i(x), & \left(\R\setminus\big(a_{\vec c,i},b_{\vec c,i}\big)\right)\cap U_e.
\end{array}
\right.
\end{equation}
Then one can check using \eqref{r_eprop2} and \eqref{zeta-vecn-2} that \hyperref[rhp]{\rhp} is solved by
\begin{equation}
\label{PeC2}
\boldsymbol P_e := \boldsymbol E_e \mathsf{T}_i\left(\boldsymbol\Psi_e\left(|\:\vec n\:|^{2/3}\big(\zeta_{\vec n,e}-\zeta_{\vec n,e}(e)\big)\right)r_e^{-\sigma_3}\left(\Phi^{(0)}_{\vec n}/\Phi^{(i)}_{\vec n}\right)^{-\sigma_3/2}\right)\boldsymbol D,
\end{equation}
where $\boldsymbol \Psi_e:=\boldsymbol\Psi_{\alpha,\beta}(\cdot;s_{\vec n})$ when $e=b_{\vec c,i}$ and $\boldsymbol \Psi_e:=\sigma_3\boldsymbol\Psi_{\alpha,\beta}(\cdot;s_{\vec n})\sigma_3$ when $e=a_{\vec c,i}$, $\boldsymbol \Psi_{\alpha,\beta}(\cdot;s)$ solves \hyperref[rhpsiAB]{\rhpsiAB},
\[
s_{\vec n} := |\:\vec n\:|^{2/3}\zeta_{\vec n,e}(e),
\]
and $\boldsymbol E_e$ is a holomorphic prefactor chosen so \hyperref[rhp]{\rhp}(d) is satisfied.

\subsubsection{Holomorphic Prefactor $\boldsymbol E_e$}

If $s_{\vec n} = 0$, then $\boldsymbol E_e$ is given by \eqref{HP2} with $|\:\vec n\:|^2$ replaced by $|\:\vec n\:|^{2/3}$. In this case we have by Theorem~\ref{thm:localRH} that $\varepsilon_{\vec n,e}=|\:\vec n\:|^{-1/3}$.

If $s_{\vec n}>0$, then \eqref{HP2} is no longer applicable as the matrix $\boldsymbol M$ has the jump only across $\big(a_{\vec n,i},b_{\vec n,i})$ while $r_e^{-\sigma_3}$ is discontinuous across $\big(a_{\vec c,i},b_{\vec c,i}\big)\cap U_e$ where $b_{\vec n,i}<b_{\vec c,i}$ or $a_{\vec n,i}>a_{\vec c,i}$. Observe that 
\[
r_{e+}(x) = r_{e-}(x)e^{\alpha\pi\mathrm{i}}, \quad x\in\left( \big(a_{\vec c,i},b_{\vec c,i}\big)\setminus \big(a_{\vec n,i},b_{\vec n,i}\big) \right)\cap U_e.
\]
Therefore, define
\[
G_\alpha(\zeta) := \exp\left\{-\pi\mathrm{i}\alpha\sqrt\zeta\frac1{2\pi\mathrm{i}}\int_0^1\frac1{\sqrt x}\frac{\mathrm{d}x}{x-\zeta}\right\}, \quad \zeta\in\C\setminus(-\infty,1].
\]
It is quite easy to see that
\[
\left\{
\begin{array}{lll}
G_{\alpha+}G_{\alpha-} \equiv1 & \text{on} & (-\infty,0), \medskip \\ 
G_{\alpha-}=G_{\alpha+}\pi\mathrm{i}\alpha & \text{on} & (0,1).
\end{array}
\right.
\]
Moreover, from the theory of singular integrals \cite[Sec.~8.3]{Gakhov} we know that $G_\alpha$ is bounded around the origin and behaves like $|\zeta-1|^{-\alpha/2}$ around 1. Then it can be checked using the above properties that the matrix function
\[
\boldsymbol E_e:=\boldsymbol M\mathsf{T}_i \left( \frac{\left(|\:\vec n\:|^{2/3}\zeta_{\vec n,e}\right)^{-\sigma_3/4}}{\sqrt2} \left(\begin{matrix} 1 & \pm\mathrm{i} \\ \pm\mathrm{i} & 1 \end{matrix}\right) \left( G_\alpha \circ \left(\zeta_{\vec n,e}/\zeta_{\vec n,e}(e)\right) r_e \right)^{-\sigma_3} \right)^{-1}
\]
is holomorphic in $U_e$. With such $\boldsymbol E_e$ it holds that
\[
\boldsymbol P_e = \boldsymbol M \mathsf{T}_i\left(G_\alpha^{-\sigma_3} \circ \left(\zeta_{\vec n,e}/\zeta_{\vec n,e}(e)\right) \left(\boldsymbol I+\mathcal{O}\left(\varepsilon_{\vec n,e}\right)\right)\right) \boldsymbol D
\]
uniformly on $\partial U_e\setminus\big((a_i,b_i)\cup\Gamma_{\vec n,i}^+\cup\Gamma_{\vec n,i}^-\big)$, where
\begin{equation}
\label{kl}
\varepsilon_{\vec n,e}=\max\left\{|\zeta_{\vec n,e}(e)|^{1/2},|\:\vec n\:|^{-1/3}\right\},
\end{equation}
according to Theorem~\ref{thm:localRH}. To see that \hyperref[rhp]{\rhp}(d) is fulfilled it only remains to notice that $G_\alpha(\zeta)=1+\mathcal{O}\big(\zeta^{-1/2}\big)$ as $\zeta\to\infty$ uniformly in $\C\setminus(-\infty,1]$.

If $s_{\vec n}<0$, we need to modify \eqref{HP2} again because $\boldsymbol M$ still has its jump over $\big(a_{\vec n,i},b_{\vec n,i}\big)$ while $r_e$ over $\big(a_{\vec c,i},b_{\vec c,i})$ where $b_{\vec n,i}>b_{\vec c,i}$ or $a_{\vec n,i}<a_{\vec c,i}$. Define
\begin{equation}
\label{Fbeta}
F_\beta (\zeta) := \beta^{1/2} \left( \frac{\mathrm{i}+(\zeta-1)^{1/2}}{\mathrm{i}-(\zeta-1)^{1/2}} \right)^{\log\beta/2\pi\mathrm{i}}, \quad \zeta\in\C\setminus(-\infty,1].
\end{equation}
This function is holomorphic in the domain of its definition, tends to 1 as $\zeta\to\infty$, and satisfies
\[
F_{\beta+}(x)F_{\beta-}(x) = \left\{
\begin{array}{ll}
1, & x\in(-\infty,0), \medskip \\
\beta, & x\in(0,1).
\end{array}
\right.
\]
Indeed, the function $(\mathrm{i}+\sqrt{\zeta-1})/(\mathrm{i}-\sqrt{\zeta-1})$ maps the complement of $(-\infty,1]$ to the lower half-plane, its traces on $(-\infty,1)$ are reciprocal to each other, are positive on $(0,1)$, and are negative on $(-\infty,0)$. The stated properties now easily follow if we take the principal branch of $\log\beta/2\pi\mathrm{i}$ root of this function. Then
\[
\boldsymbol E_e:=\boldsymbol M \mathsf{T}_i\left( \frac{\left(|\:\vec n\:|^{2/3}\zeta_{\vec n,e}\right)^{-\sigma_3/4}}{\sqrt2} \left(\begin{matrix} 1 & \pm\mathrm{i} \\ \pm\mathrm{i} & 1 \end{matrix}\right) \left(F_\beta \circ \left(\frac{\zeta_{\vec n,e}(e)-\zeta_{\vec n,e}}{\zeta_{\vec n,e}(e)}\right) r_e \right)^{-\sigma_3} \right)^{-1}
\]
is holomorphic in $U_e\setminus\{e\}$. Since $|r_e(z)|\sim|z-e|^{\alpha/2}$ as $z\to e$, one can deduce as before $\boldsymbol E_e$ is holomorphic in $U_e$. Moreover, exactly as in the case $s_{\vec n}>0$, we get that \hyperref[rhp]{\rhp} holds with $\varepsilon_{\vec n,e}$ given by \eqref{kl} since $F_\beta(\zeta)=1+\mathcal{O}\big(\zeta^{-1/2}\big)$ as $\zeta\to\infty$.

\subsection{Soft-Type Edge II}

Let $e\in E_{\vec c}$, $e\in\partial D_i^-$, but $b_{\vec n,i}\not\in\partial D_{\vec n,i}^-$ or $a_{\vec n,i}\not\in\partial D_{\vec n,i}^-$. In this case it necessarily holds that $b_{\vec n,i}=b_{\vec c,i}=b_i$ or $a_{\vec n,i}=a_{\vec c,i}=a_i$.

\subsubsection{Conformal Maps}

By Proposition~\ref{prop:derivative}, $h$ is bounded at $\e$ (the corresponding branch point of $\RS$) while $h_{\vec n}$  has a simple pole at $\e$ (this time $\e$ is a branch point of $\RS_{\vec n}$, but it has the same projection $e$) and a simple zero $\z_{\vec n,i}$ or $\z_{\vec n,i-1}$ that approaches $\e$. Hence, we can write
\begin{equation}
\label{last-part}
-\frac34\int_e^z\left(h_{\vec n}^{(0)}-h_{\vec n}^{(i)}\right)(x)\mathrm{d}x = \sqrt{z-e}\left(z-e-\epsilon_{\vec n}\right)f_{\vec n}(z),
\end{equation}
where $0\leq \epsilon_{\vec n}\to 0$ as $|\:\vec n\:|\to\infty$ and $f_{\vec n}$ is non-vanishing in some neighborhood of $e$ and is positive on the real line within this neighborhood (one can factor out $\sqrt{z-e}$ as the square of the left-hand side is holomorphic exactly as in \eqref{zeta-vec-1} and \eqref{zeta-vecn-1}). Then there exist functions $\zeta_{\vec n,e}$, conformal in $U_e$, vanishing at $e$, real on $\R\cap U_e$, and positive for $x>e$ in $U_e$ such that
\begin{equation}
\label{zeta-vecn-4}
-\frac34\int_e^z\left(h_{\vec n}^{(0)}-h_{\vec n}^{(i)}\right)(x)\mathrm{d}x = \zeta_{\vec n,e}^{3/2}(z) - \zeta_{\vec n,e}(e+\epsilon_{\vec n})\zeta_{\vec n,e}^{1/2}(z).
\end{equation}
Moreover, \eqref{zeta-asymp} holds, where $\zeta_e$ is defined by \eqref{zeta-vec-2}, and the left-hand side of \eqref{zeta-vecn-4} is equal to the right-hand side of \eqref{zeta-vecn-2}. Indeed, consider the equation
\begin{equation}
\label{alg-eq}
u(z;\epsilon)\big(u(z;\epsilon)-p\big)^2 = g(z;\epsilon), \quad g(z;\epsilon) := z(z-\epsilon)^2f(z;\epsilon),
\end{equation}
where $p$ is a parameter, $f(z;\epsilon)$ is positive on the real line in some neighborhood of zero and $g^{1/3}(z;0)$ is conformal in this neighborhood. The solution of \eqref{alg-eq} is given by
\begin{equation}
\label{sol-alg-eq}
u(z;\epsilon) = 2p + v^{1/3}(z;\epsilon) + p^2v^{-1/3}(z;\epsilon),
\end{equation}
where $v(z;\epsilon)$ is the branch satisfying $v^{1/3}(0;\epsilon)=-p$ of
\begin{equation}
\label{aux-alg-eq}
v(z;\epsilon) = g(z;\epsilon) - p^3 + \sqrt{g(z;\epsilon)\big(g(z;\epsilon)-2p^3\big)}.
\end{equation}
Choose $p$ so that
\begin{equation}
\label{param-ch}
2p ^3 = \max_{x\in[0,\epsilon]}g(x;\epsilon).
\end{equation}
Conformality of $g^{1/3}(z;0)$ implies that there exists the unique $x_\epsilon>\epsilon$ such that
\[
\left\{
\begin{array}{ll}
g(x;\epsilon)\big(g(x;\epsilon)-2p^3\big) < 0, & x\in(0,x_\epsilon)\setminus\{\epsilon\}, \medskip \\
g(x;\epsilon)\big(g(x;\epsilon)-2p^3\big) > 0, & x > x_\epsilon,
\end{array}
\right.
\]
for all $\epsilon$ small enough. Then we can see from \eqref{aux-alg-eq} that
\begin{equation}
\label{circle-eq}
|v_\pm(x;\epsilon)|^2 = \big(g(x;\epsilon) - p^3)^2 - g(x;\epsilon)\big(g(x;\epsilon)-2p^3\big) = p^6
\end{equation}
for $x\in[0,x_\epsilon]$. Moreover, it holds that
\begin{equation}
\label{traces-eq}
v_+(x;\epsilon) = p^2v_-^{-1}(x;\epsilon), \quad x\in[0,x_\epsilon].
\end{equation}
Finally, observe that the conformality of $g^{1/3}(z;0)$ yields that the change of the argument of  $v_+(x;\epsilon)$ is $3\pi$ when $x$ changes between $0$ and $x_\epsilon$. Hence, $v^{1/3}(z;\epsilon)$ is holomorphic off $[0,\epsilon]$ and its traces on $[0,\epsilon]$ map this interval onto the circle centered at the origin of radius $p$ by \eqref{circle-eq}. This together with \eqref{traces-eq} implies that $u(z;\epsilon)$ given by \eqref{sol-alg-eq} is conformal in some neighborhood of the origin and $u(0;\epsilon)=0$. Thus, $\zeta_{\vec n,e}$ in \eqref{zeta-vecn-4} is given by
\[
\zeta_{\vec n,e}(z) = u(z-e;\epsilon_{\vec n}),
\]
where $u(z;\epsilon)$ is the solution given by \eqref{sol-alg-eq} of \eqref{alg-eq} with $f(z;\epsilon):=f_{\vec n}^2(z-e)$ and the parameter $p$ chosen as in \eqref{param-ch}.

\subsubsection{Matrix $\boldsymbol P_e$}

Clearly, formulae \eqref{rho_e} and \eqref{r_i} remain valid in this case. Then \eqref{r_i-prop} and \eqref{zeta-vecn-4} imply that the solution of \hyperref[rhp]{\rhp} is given by
\[
\boldsymbol P_e := \boldsymbol E_e \mathsf{T}_i\left(\boldsymbol\Psi_e\left(|\:\vec n\:|^{2/3}\zeta_{\vec n,e}\right) r_e^{-\sigma_3} \left(\Phi^{(0)}_{\vec n}/\Phi^{(i)}_{\vec n}\right)^{-\sigma_3/2}\right)\boldsymbol D,
\]
where $\boldsymbol E_e$ is given by \eqref{HP2} with $|\:\vec n\:|^2$ replaced by $|\:\vec n\:|^{2/3}$, $\boldsymbol\Psi_e=\widetilde{\boldsymbol\Psi}_{\alpha,0}(\cdot;s_{\vec n})$ when $e=b_i$ and $\boldsymbol\Psi_e=\sigma_3\widetilde{\boldsymbol\Psi}_{\alpha,0}(\cdot;s_{\vec n})\sigma_3$ when $e=a_i$,
\[
s_{\vec n} := - |\:\vec n\:|^{2/3}\zeta_{\vec n,e}(e+\epsilon_{\vec n}),
\]
and $\widetilde{\boldsymbol\Psi}_{\alpha,\beta}$ is the solution of \hyperref[rhwpsiAB]{\rhwpsiAB}. In this case, it holds by Theorem~\ref{thm:localRH} that
\[
\varepsilon_{\vec n,e} = \max\left\{\zeta_{\vec n,e}^{1/2}(e+\epsilon_{\vec n}),|\:\vec n\:|^{-1/3}\right\}.
\]

\section{Model Riemann-Hilbert Problem \hyperref[rhpsiAB]{\rhpsiAB}}
\label{sec:rhpsiAB}

In this section we prove Theorem~\ref{thm:localRH}.

\subsection{Uniqueness and Existence}


Since all the jump matrices in \hyperref[rhpsiAB]{\rhpsiAB}(b) have unit determinant, $\det(\boldsymbol\Psi_{\alpha,\beta})$ is holomorphic in $\C\setminus\{0\}$. By  \hyperref[rhpsiAB]{\rhpsiAB}(d), it holds that $\det(\boldsymbol\Psi_{\alpha,\beta})(\infty)=1$. It also follows from \hyperref[rhpsiAB]{\rhpsiAB}(c) that $\det(\boldsymbol\Psi_{\alpha,\beta})$ cannot have a polar singularity at the origin. Hence, $\det(\boldsymbol\Psi_{\alpha,\beta})\equiv1$. In particular, any solution of \hyperref[rhpsiAB]{\rhpsiAB} is invertible. If $\boldsymbol\Psi_1$ and $\boldsymbol\Psi_2$ are two such solutions, then it is easy to verify that $\boldsymbol\Psi_1\boldsymbol\Psi_2^{-1}$ is holomorphic in $\C$. Moreover, $\boldsymbol\Psi_1\boldsymbol\Psi_2^{-1}(\zeta) = \boldsymbol I + \mathcal{O}(1/\zeta)$ as $\zeta\to\infty$. Thus, $\boldsymbol\Psi_1\boldsymbol\Psi_2^{-1}=\boldsymbol I$, which proves uniqueness.

\subsection{Local Behavior}

To proceed with the existence, we need more detailed description of the behavior of $\boldsymbol\Psi_{\alpha,\beta}$ at the origin. Denote by $\Omega_1$, $\Omega_2$, $\Omega_3$, and $\Omega_4$ consecutive sectors of $\C\setminus\big((-\infty,\infty)\cup I_-\cup I_+\big)$ staring with the one containing the first quadrant and continuing counter clockwise. Then we can write
\begin{equation}
\label{sectors}
\boldsymbol\Psi_{\alpha,\beta}(\zeta) = \boldsymbol E(\zeta) \boldsymbol S_{\alpha,\beta}(\zeta)\boldsymbol A_j, \quad \zeta\in\Omega_j,
\end{equation}
where $\boldsymbol E$ is a holomorphic matrix function, 
\begin{equation}
\label{As}
\boldsymbol A_3 =  \boldsymbol A_4 \left(\begin{matrix} 1 & 0 \\ e^{-\alpha\pi\mathrm{i}} & 1 \end{matrix}\right), \quad \boldsymbol A_4 = \boldsymbol A_1\left(\begin{matrix} 1 & -\beta \\ 0 & 1 \end{matrix}\right), \quad \boldsymbol A_1 = \boldsymbol A_2\left(\begin{matrix} 1 & 0 \\ e^{\alpha\pi\mathrm{i}} & 1 \end{matrix}\right),
\end{equation}
and
\begin{equation}
\label{S1}
\boldsymbol A_2 = \left(\begin{matrix} \frac1{2\cos(\alpha\pi/2)}\frac{ 1-\beta e^{\alpha\pi\mathrm{i}} }{ 1-e^{\alpha\pi\mathrm{i}} } & \frac1{2\cos(\alpha\pi/2)}\frac{ \beta - e^{\alpha\pi\mathrm{i}} }{ 1-e^{\alpha\pi\mathrm{i}} } \medskip \\ -e^{\alpha\pi\mathrm{i}/2} & e^{-\alpha\pi\mathrm{i}/2} \end{matrix}\right) \quad \text{while} \quad \boldsymbol S_{\alpha,\beta}(\zeta) = \zeta^{\alpha\sigma_3/2}
\end{equation}
when $\alpha$ is not an integer, 
\begin{equation}
\label{S2}
\boldsymbol A_2 = \left(\begin{matrix} \frac12 e^{\alpha\pi\mathrm{i}/2} & \frac12 e^{-\alpha\pi\mathrm{i}/2} \medskip \\ -e^{\alpha\pi\mathrm{i}/2} & e^{-\alpha\pi\mathrm{i}/2} \end{matrix}\right) \quad \text{while} \quad \boldsymbol S_{\alpha,\beta}(\zeta) = \left(\begin{matrix} \zeta^{\alpha/2} & \frac{1-\beta}{2\pi\mathrm{i}}\zeta^{\alpha/2}\log\zeta \medskip \\ 0 & \zeta^{-\alpha/2} \end{matrix}\right)
\end{equation}
when $\alpha$ is an even integer,
\begin{equation}
\label{S3}
\boldsymbol A_2 = \left(\begin{matrix} 0 & e^{-\alpha\pi\mathrm{i}/2} \medskip \\ -e^{\alpha\pi\mathrm{i}/2} & e^{-\alpha\pi\mathrm{i}/2} \end{matrix}\right) \quad \text{while} \quad \boldsymbol S_{\alpha,\beta}(\zeta) = \left(\begin{matrix} \zeta^{\alpha/2} & \frac{1+\beta}{2\pi\mathrm{i}}\zeta^{\alpha/2}\log\zeta \medskip \\ 0 & \zeta^{-\alpha/2} \end{matrix}\right)
\end{equation}
when $\alpha$ is an odd integer (observe that $\det(\boldsymbol A_j)=1$ for all $j\in\{1,2,3,4\}$). Indeed, equation \eqref{sectors} can be viewed as a definition of the matrix $\boldsymbol E$. Relations \eqref{As} are chosen so $\boldsymbol E$ is holomorphic across $I_\pm$ and $(0,\infty)$. Moreover, on $(-\infty,0)$ it holds that
\[
\boldsymbol E_-^{-1}\boldsymbol E_+ = \boldsymbol S_{\alpha,\beta-} \boldsymbol A_2 \left(\begin{matrix} 1 & 0 \\ e^{\alpha\pi\mathrm{i}} & 1 \end{matrix}\right)\left(\begin{matrix} 1 & -\beta \\ 0 & 1 \end{matrix}\right)\left(\begin{matrix} 1 & 0 \\ e^{-\alpha\pi\mathrm{i}} & 1 \end{matrix}\right)\left(\begin{matrix} 0 & 1 \\ -1 & 0 \end{matrix}\right) \boldsymbol A_2^{-1}\boldsymbol S_{\alpha,\beta+}^{-1} = \boldsymbol I,
\]
where the last equality is a tedious but straightforward computation. Hence, $\boldsymbol E$ is holomorphic in $\C\setminus\{0\}$. Using \hyperref[rhpsiAB]{\rhpsiAB}(d) and \eqref{sectors}, one can verify that $\boldsymbol E$ cannot have a polar singularity at 0 and therefore is entire as claimed.

\subsection{Vanishing Lemma}

The crucial step in showing solvability of \hyperref[rhpsiAB]{\rhpsiAB} is the following result. Assume $\boldsymbol F_{\alpha,\beta}$ satisfies  \hyperref[rhpsiAB]{\rhpsiAB}(a,b,c) and it holds that
\begin{equation}
\label{PsiABd}
\boldsymbol F_{\alpha,\beta}(\zeta) = \mathcal{O}\left(\zeta^{-1}\right) \frac{\zeta^{-\sigma_3/4}}{\sqrt2} \left(\begin{matrix} 1 & \mathrm{i} \\ \mathrm{i} & 1 \end{matrix}\right) \exp\left\{-\left(\frac23\zeta^{3/2}+s\zeta^{1/2}\right)\sigma_3\right\}
\end{equation}
as $\zeta\to\infty$ uniformly in $\C\setminus\big(I_+\cup I_-\cup(-\infty,\infty)\big)$. Then $\boldsymbol F_{\alpha,\beta}\equiv0$. To prove this claim, we follow the line of argument from \cite{IKOs08} and \cite{XuZh11}. Set, for brevity, $\theta(\zeta):=\left(\frac23\zeta^{3/2}+s\zeta^{1/2}\right)$. Assuming $\re(\beta)\geq0$, define
\[
\boldsymbol G_{\alpha,\beta}(\zeta) := \left\{
\begin{array}{ll}
\boldsymbol F_{\alpha,\beta}(\zeta)e^{\theta(\zeta)\sigma_3} \left(\begin{matrix} 0 & -1 \\ 1 & 0 \end{matrix}\right), & \zeta\in\Omega_1, \medskip \\
\boldsymbol F_{\alpha,\beta}(\zeta)e^{\theta(\zeta)\sigma_3} \left(\begin{matrix} 1 & 0 \\ e^{\alpha\pi\mathrm{i}}e^{2\theta(\zeta)} & 1 \end{matrix}\right) \left(\begin{matrix} 0 & -1 \\ 1 & 0 \end{matrix}\right), & \zeta\in\Omega_2,  \medskip \\
\boldsymbol F_{\alpha,\beta}(\zeta)e^{\theta(\zeta)\sigma_3} \left(\begin{matrix} 1 & 0 \\ -e^{-\alpha\pi\mathrm{i}}e^{2\theta(\zeta)} & 1 \end{matrix}\right), & \zeta\in\Omega_3,  \medskip \\
\boldsymbol F_{\alpha,\beta}(\zeta)e^{\theta(\zeta)\sigma_3}, & \zeta\in\Omega_4.
\end{array}
\right.
\] 
Then $\boldsymbol G_{\alpha,\beta}$ satisfies the following Riemann-Hilbert problem (RHP-$\boldsymbol G_{\alpha,\beta}$): 
\begin{itemize}
\label{rhgAB}
\item[(a)] $\boldsymbol G_{\alpha,\beta}$ is holomorphic in $\C\setminus(-\infty,\infty)$;
\item[(b)] $\boldsymbol G_{\alpha,\beta}$ has continuous traces on $(-\infty,0)\cup(0,\infty)$ that satisfy
\[
\boldsymbol G_{\alpha,\beta+} = \boldsymbol G_{\alpha,\beta-}
\left\{
\begin{array}{rll}
\left(\begin{matrix} 1 & -e^{\alpha\pi\mathrm{i}}e^{2\theta_+} \\ e^{-\alpha\pi\mathrm{i}}e^{2\theta_-} & 0 \end{matrix}\right) & \text{on} & (-\infty,0), \medskip \\
\left(\begin{matrix} \beta e^{-2\theta} & -1 \\ 1 & 0 \end{matrix}\right) & \text{on} & (0,\infty);
\end{array}
\right.
\]
\item[(c)] as $\zeta\to0$  it holds that
\[
\boldsymbol G_{\alpha,\beta}(\zeta) = \mathcal{O}\left( \begin{matrix} |\zeta|^{\alpha/2} & |\zeta|^{\alpha/2} \\ |\zeta|^{\alpha/2} & |\zeta|^{\alpha/2} \end{matrix} \right) 
\]
when $\alpha<0$,
\[
\boldsymbol G_{\alpha,\beta}(\zeta) = \mathcal{O}\left( \begin{matrix} 1 & 1+(1-\beta)\log|\zeta| \\ 1 & 1+(1-\beta)\log|\zeta| \end{matrix} \right) \quad \text{and} \quad \boldsymbol G_{\alpha,\beta}(\zeta) = \mathcal{O}\left( \begin{matrix} 1+(1-\beta)\log|\zeta| & 1 \\ 1+(1-\beta)\log|\zeta| & 1 \end{matrix} \right)
\]
when $\alpha=0$, for $\im(\zeta)<0$ and $\im(\zeta)>0$, respectively, and
\[
\boldsymbol G_{\alpha,\beta}(\zeta) = \mathcal{O}\left( \begin{matrix} |\zeta|^{\alpha/2} & |\zeta|^{-\alpha/2} \\ |\zeta|^{\alpha/2} & |\zeta|^{-\alpha/2} \end{matrix} \right) \quad \text{and} \quad \boldsymbol G_{\alpha,\beta}(\zeta) = \mathcal{O}\left( \begin{matrix} |\zeta|^{-\alpha/2} & |\zeta|^{\alpha/2} \\ |\zeta|^{-\alpha/2} & |\zeta|^{\alpha/2} \end{matrix} \right)
\]
when $\alpha>0$, for $\im(\zeta)<0$ and $\im(\zeta)>0$, respectively;
\item[(d)] $\boldsymbol G_{\alpha,\beta}=\mathcal{O}(\zeta^{-3/4})$ as $\zeta\to\infty$.
\end{itemize}
Properties \hyperref[rhgAB]{RHP-$\boldsymbol G_{\alpha,\beta}$}(a,b) can be easily verified using \hyperref[rhpsiAB]{\rhpsiAB}(a,b), the definition of $\boldsymbol G_{\alpha,\beta}$, and the fact that $\theta_++\theta_-\equiv 0$ on $(-\infty,0)$. To show \hyperref[rhgAB]{RHP-$\boldsymbol G_{\alpha,\beta}$}(c), observe that the representations \eqref{sectors}--\eqref{S3} holds for $\boldsymbol F_{\alpha,\beta}$ as well. They imply that
\[
\boldsymbol G_{\alpha,\beta} = \boldsymbol {ES}_{\alpha,\beta}\boldsymbol A_1\left(\begin{matrix} 0 & -1 \\ 1 & 0 \end{matrix}\right)e^{-\theta\sigma_3} \quad \text{and} \quad \boldsymbol G_{\alpha,\beta} = \boldsymbol {ES}_{\alpha,\beta}\boldsymbol A_4e^{\theta\sigma_3}
\]
in $\Omega_1\cup\Omega_2$ and $\Omega_3\cup\Omega_4$, respectively. Since $[\boldsymbol A_1]_{21}=[\boldsymbol A_4]_{21}=0$, \hyperref[rhgAB]{RHP-$\boldsymbol G_{\alpha,\beta}$}(c) follows. Finally, \hyperref[rhgAB]{RHP-$\boldsymbol G_{\alpha,\beta}$}(d) is the consequence of the fact that $\re(\theta)<0$ in $\Omega_2\cup\Omega_3$.

For the next step of the proof consider the matrix function
\[
\boldsymbol G_{\alpha,\beta}(\zeta)\big(\boldsymbol G_{\alpha,\beta}(\bar\zeta)\big)^*, \quad \zeta\not\in(-\infty,\infty),
\]
where $\big(\boldsymbol G_{\alpha,\beta}\big)^*$ is the hermitian conjugate of $\boldsymbol G_{\alpha,\beta}$. This matrix function is holomorphic off the real line, has integrable traces on the real line by \hyperref[rhgAB]{RHP-$\boldsymbol G_{\alpha,\beta}$}(c) (recall that $\alpha>-1$), and vanishes at infinity as $\zeta^{-3/2}$ by \hyperref[rhgAB]{RHP-$\boldsymbol G_{\alpha,\beta}$}(d). Thus, we deduce from Cauchy's theorem that the integral of its traces over the real line is zero, i.e.,
\begin{equation}
\label{last-minute} 
0 = \int_{-\infty}^\infty \boldsymbol G_{\alpha,\beta+}(x)\big(\boldsymbol G_{\alpha,\beta-}(x)\big)^*\mathrm{d}x = \int_{-\infty}^\infty \boldsymbol G_{\alpha,\beta-}(x)\big(\boldsymbol G_{\alpha,\beta+}(x)\big)^*\mathrm{d}x.
\end{equation}
Adding the last two integrals together and using \hyperref[rhgAB]{RHP-$\boldsymbol G_{\alpha,\beta}$}(b), we get
\begin{multline*}
0 = \int_{-\infty}^0 \boldsymbol G_{\alpha,\beta-}(x) \left(\begin{matrix} 2 & 0 \\ 0 & 0 \end{matrix}\right) \big(\boldsymbol G_{\alpha,\beta-}(x)\big)^* \mathrm{d}x \\
+ \int_0^\infty \boldsymbol G_{\alpha,\beta-}(x) \left(\begin{matrix} (\beta+\bar\beta)e^{-2\theta(x)} & 0 \\ 0 & 0 \end{matrix}\right) \big(\boldsymbol G_{\alpha,\beta-}(x)\big)^* \mathrm{d}x.
\end{multline*}
The above equality yields that the first column of $\boldsymbol G_{\alpha,\beta-}$ vanishes identically on $(-\infty,0)$ (on the whole real line if $\beta$ is not purely imaginary). In any case, since $\boldsymbol G_{\alpha,\beta-}$ consists of traces of holomorphic functions, its first column vanishes identically in the lower half-plane. Thus, by \hyperref[rhgAB]{RHP-$\boldsymbol G_{\alpha,\beta}$}(b), the second column of $\boldsymbol G_{\alpha,\beta}$ vanishes identically in the upper half-plane. To finish proving that $\boldsymbol G_{\alpha,\beta}\equiv0$ in the case $\re(\beta)\geq0$ (and therefore $\boldsymbol F_{\alpha,\beta}\equiv0$), set
\[
g_i(\zeta) := \left\{
\begin{array}{ll}
~[\boldsymbol G_{\alpha,\beta}]_{i1}(\zeta), & \im(\zeta)>0, \medskip \\
~[\boldsymbol G_{\alpha,\beta}]_{i2}(\zeta), & \im(\zeta)<0.
\end{array}
\right.
\]
Both functions $g_i$ are holomorphic in $\C\setminus(-\infty,0]$, satisfy $g_i(\zeta)=\mathcal{O}(\zeta^{-3/4})$ as $\zeta\to\infty$ and $g_i(\zeta)=\mathcal{O}(|\zeta|^{-|\alpha|/2})$ as $\zeta\to0$, while their traces are related by the formula
\[
g_{i+}(x) = g_{i-}(x)e^{-\alpha\pi\mathrm{i}}e^{2\theta_-(x)}, \quad x\in(-\infty,0).
\]
The latter is possible only if $g_i\equiv0$ as shown in \cite[Def. (2.26) and below]{IKOs08}.

When $\re(\beta)<0$ and $\im(\beta)\neq0$, let us redefine $\boldsymbol G_{\alpha,\beta}$ in $\Omega_1$ and $\Omega_2$ by setting
\[
\boldsymbol G_{\alpha,\beta}(\zeta) := \left\{
\begin{array}{ll}
\boldsymbol F_{\alpha,\beta}(\zeta)e^{\theta(\zeta)\sigma_3} \left(\begin{matrix} 0 & 1 \\ 1 & 0 \end{matrix}\right), & \zeta\in\Omega_1, \medskip \\
\boldsymbol F_{\alpha,\beta}(\zeta)e^{\theta(\zeta)\sigma_3} \left(\begin{matrix} 1 & 0 \\ e^{\alpha\pi\mathrm{i}}e^{2\theta(\zeta)} & 1 \end{matrix}\right) \left(\begin{matrix} 0 & 1 \\ 1 & 0 \end{matrix}\right), & \zeta\in\Omega_2.
\end{array}
\right.
\]
This newly defined function $\boldsymbol G_{\alpha,\beta}$ still satisfies \hyperref[rhgAB]{RHP-$\boldsymbol G_{\alpha,\beta}$} except for \hyperref[rhgAB]{RHP-$\boldsymbol G_{\alpha,\beta}$}(b), which now becomes
\[
\boldsymbol G_{\alpha,\beta+} = \boldsymbol G_{\alpha,\beta-}
\left\{
\begin{array}{rll}
\left(\begin{matrix} 1 & e^{\alpha\pi\mathrm{i}}e^{2\theta_+} \\ e^{-\alpha\pi\mathrm{i}}e^{2\theta_-} & 0 \end{matrix}\right) & \text{on} & (-\infty,0), \medskip \\
\left(\begin{matrix} \beta e^{-2\theta} & 1 \\ 1 & 0 \end{matrix}\right) & \text{on} & (0,\infty).
\end{array}
\right.
\]
Observe that \eqref{last-minute} remains valid. Thus, by taking the difference of the integrals in \eqref{last-minute}, we arrive at
\[
0 = \int_0^\infty \boldsymbol G_{\alpha,\beta-}(x) \left(\begin{matrix} (\beta-\bar\beta)e^{-2\theta(x)} & 0 \\ 0 & 0 \end{matrix}\right) \big(\boldsymbol G_{\alpha,\beta-}(x)\big)^* \mathrm{d}x.
\]
This again allows us to conclude that the first column of $\boldsymbol G_{\alpha,\beta}$ vanishes identically in the lower half-plane and the second column vanishes in the upper half-plane. The remaining part of the proof is now exactly the same as in the case $\re(\beta)\geq0$.
  
\subsection{Existence}

For $\boldsymbol A_j$ and $\boldsymbol S_{\alpha,\beta}$ as in \eqref{As}--\eqref{S3}, define
\[
\boldsymbol M_{\alpha,\beta}(\zeta) := \left\{
\begin{array}{ll}
\boldsymbol \Psi_{\alpha,\beta}(\zeta)\boldsymbol A_j^{-1}\boldsymbol S_{\alpha,\beta}^{-1}(\zeta), & \zeta\in\Omega_j\cap\big\{|\zeta|<1\big\}, \medskip \\
\boldsymbol \Psi_{\alpha,\beta}(\zeta)e^{\theta(\zeta)\sigma_3}\frac1{\sqrt2}\left(\begin{matrix} 1 & -\mathrm{i} \\ -\mathrm{i} & 1 \end{matrix}\right)\zeta^{\sigma_3/4}, & \zeta\in\Omega_j\cap\big\{|\zeta|\geq1\big\}.
\end{array}
\right.
\]
Further, let the contour $\Sigma_{\boldsymbol M}$ be as on Figure~\ref{fig:SigmaM} with its subarcs oriented so that
\begin{figure}[!ht]
\centering
\includegraphics[scale=.5]{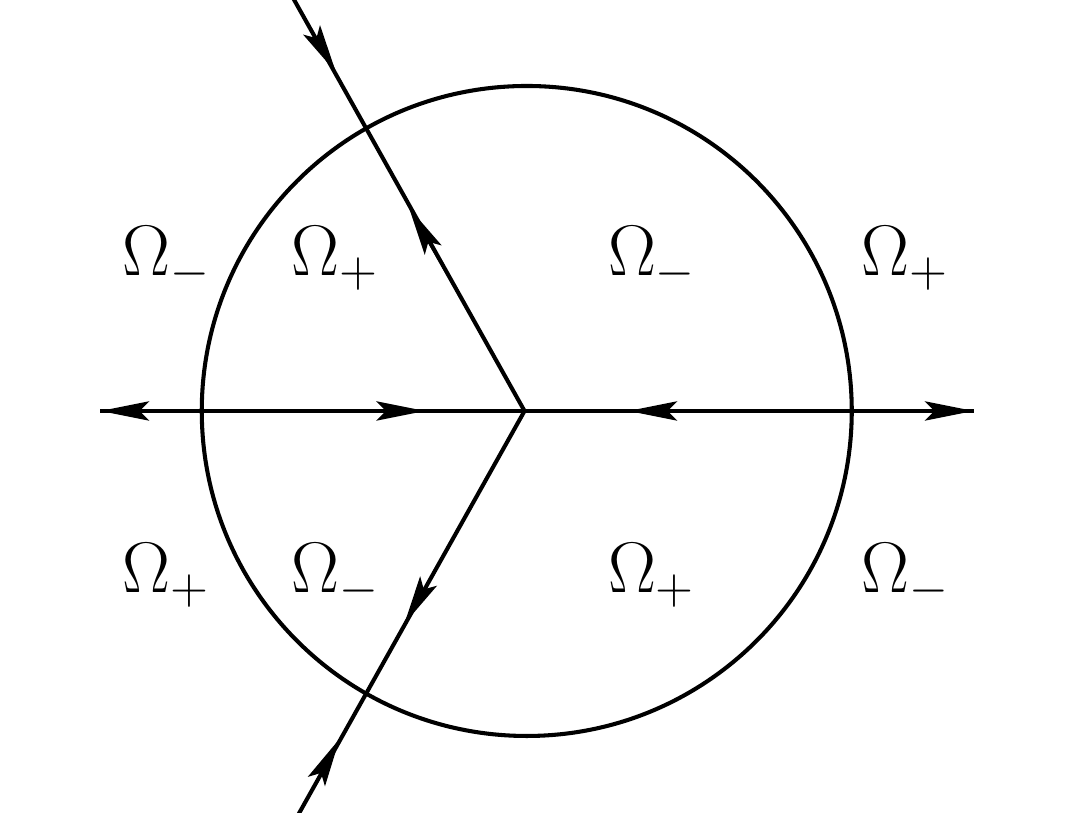}
\caption{\small Contours $\Sigma_{\boldsymbol M}$.}
\label{fig:SigmaM}
\end{figure} 
$\C\setminus\Sigma_{\boldsymbol M}=\Omega_+\cup\Omega_-$, where $\Sigma_{\boldsymbol M}$ is positively oriented boundary of $\Omega_+$ and is negatively oriented boundary of $\Omega_-$. If $\boldsymbol \Psi_{\alpha,\beta}$ uniquely solves \hyperref[rhpsiAB]{\rhpsiAB}, then $\boldsymbol M_{\alpha,\beta}$ uniquely solves  the following Riemann-Hilbert problem (RHP-$\boldsymbol M_{\alpha,\beta}$):
\begin{itemize}
\label{rhmAB}
\item[(a)] $\boldsymbol M_{\alpha,\beta}$ is holomorphic in $\C\setminus\Sigma_{\boldsymbol M}$ and $\boldsymbol M_{\alpha,\beta}(\zeta)=\boldsymbol I+\mathcal{O}(1/\zeta)$ as $\zeta\to\infty$;
\item[(b)] $\boldsymbol M_{\alpha,\beta}$ has continuous traces on $\Sigma_{\boldsymbol M}$ that satisfy $\boldsymbol M_{\alpha,\beta+} = \boldsymbol M_{\alpha,\beta-}\boldsymbol J$, where
\[
\boldsymbol J(\zeta) = \left(\boldsymbol S_{\alpha,\beta}(\zeta)\boldsymbol A_j e^{\theta(\zeta)\sigma_3}\frac1{\sqrt2}\left(\begin{matrix} 1 & -\mathrm{i} \\ -\mathrm{i} & 1 \end{matrix}\right)\zeta^{\sigma_3/4}\right)^{\pm1},
\]
on $\Omega_j\cap\big\{|\zeta|=1\big\}$ with the exponent $1$ for $j=1,3$ and the exponent $-1$ for $j=2,4$, and on the rest of the contour $\Sigma_{\boldsymbol M}$ the jump is equal to
\[
\boldsymbol J(\zeta) = \boldsymbol I + e^{-2\theta(\zeta)}\left\{
\begin{array}{rl}
\boldsymbol 0, & \zeta\in\Sigma_{\boldsymbol M}\cap\big\{|\zeta|<1\big\} \: \text{or} \; \zeta\in(-\infty,-1), \medskip \\
\frac\beta2 \left(\begin{matrix} -\mathrm{i} & \zeta^{-1/2} \\ \zeta^{1/2} & \mathrm{i}\end{matrix}\right), & \zeta\in(1,\infty), \medskip \\
\frac{e^{\pm\alpha\pi\mathrm{i}}}2 \left(\begin{matrix} \mathrm{i}\zeta^{-1/2} & 1 \\ 1 & -\mathrm{i}\zeta^{1/2} \end{matrix}\right), & \zeta\in I_\pm \cap \big\{|\zeta|>1\big\}.
\end{array}
\right.
\]
\end{itemize}

According to \cite[Appendix]{KamvissisMcLaughlinMiller}, see also \cite[Prop. 2.4]{IKOs08}, the unique solution of \hyperref[rhmAB]{RHP-$\boldsymbol M_{\alpha,\beta}$} is given by the formula
\[
\boldsymbol M_{\alpha,\beta}(\zeta) = \boldsymbol I + \mathcal{C}\big(\boldsymbol M(\boldsymbol W_++\boldsymbol W_-)\big)(\zeta), \quad \zeta\in\C\setminus\Sigma_{\boldsymbol M},
\]
where $\boldsymbol J = (\boldsymbol I-\boldsymbol W_-)^{-1}(\boldsymbol I-\boldsymbol W_+)$ is a factorization of the jump $\boldsymbol J$ for some $\boldsymbol W_\pm\in L^2(\Sigma_{\boldsymbol M})\cap L^\infty(\Sigma_{\boldsymbol M})$, $\mathcal{C}$ is a Cauchy operator
\[
\mathcal{C}\boldsymbol H(\zeta) = \frac1{2\pi\mathrm{i}}\int_{\Sigma_{\boldsymbol M}}\frac{\boldsymbol H(s)}{s-\zeta}\mathrm{d}s, \quad \boldsymbol H\in L^2(\Sigma_{\boldsymbol M}), \quad  \zeta\in\C\setminus\Sigma_{\boldsymbol M},
\]
and $\boldsymbol M\in \boldsymbol I+L^2(\Sigma_{\boldsymbol M})$ is the solution of the singular integral equation
\begin{equation}
\label{SingEq}
\big(\mathcal{I}-\mathcal{C}_{\boldsymbol W}\big)\boldsymbol M = \boldsymbol I
\end{equation}
for the singular operator $\mathcal{C}_{\boldsymbol W}:L^2(\Sigma_{\boldsymbol M})\to L^2(\Sigma_{\boldsymbol M})$ given by
\[
\mathcal{C}_{\boldsymbol W}\boldsymbol H := \mathcal{C}_+(\boldsymbol{HW}_-) + \mathcal{C}_-(\boldsymbol{HW}_+), \quad \boldsymbol H\in L^2(\Sigma_{\boldsymbol M}),
\]
provided this solution exists and is unique. Indeed, given such $\boldsymbol M$, it holds that
\[
\boldsymbol I + \mathcal{C}_\pm\big(\boldsymbol M(\boldsymbol W_++\boldsymbol W_-)\big) = \boldsymbol I + \mathcal{C}_{\boldsymbol W}\boldsymbol M + \big(\mathcal{C}_\pm-\mathcal{C}_\mp\big)(\boldsymbol{MW}_\pm) = \boldsymbol M \pm \boldsymbol {MW}_\pm
\]
by \eqref{SingEq} and Sokhotski-Pemelj formulae \cite[Section~4.2]{Gakhov}. Then
\[
\left(\boldsymbol I + \mathcal{C}_-\big(\boldsymbol M(\boldsymbol W_++\boldsymbol W_-)\big)\right)^{-1}\left(\boldsymbol I + \mathcal{C}_+\big(\boldsymbol M(\boldsymbol W_++\boldsymbol W_-)\big)\right) = (\boldsymbol I-\boldsymbol W_-)^{-1}(\boldsymbol I-\boldsymbol W_+) = \boldsymbol J
\]
as desired. Thus, only the unique solvability of \eqref{SingEq} needs to be shown. The sufficient condition for the latter is bijectivity of the operator $\mathcal{I}-\mathcal{C}_{\boldsymbol W}$, which can be established by showing that $\mathcal{I}-\mathcal{C}_{\boldsymbol W}$ is Fredholm with index zero and trivial kernel.

To this end, let us specify $\boldsymbol W_\pm$. Away from the points of self-intersection of $\Sigma_{\boldsymbol M}$, set
\[
\boldsymbol W_+ := \boldsymbol J - \boldsymbol I \quad \text{and} \quad \boldsymbol W_- = \boldsymbol 0.
\]
Around the points of self-intersection, we chose $\boldsymbol W_+$ to be continuous along the boundary of $\Omega_+$ and $\boldsymbol W_-$ to be continuous along the boundary of $\Omega_-$. The latter is possible because around each point of self-intersection of $\Sigma_{\boldsymbol M}$, the jumps satisfy the cyclic relation
\[
\boldsymbol J_1\boldsymbol J_2^{-1}\boldsymbol J_3\boldsymbol J_4^{-1} = \boldsymbol I,
\]
where we label the four arcs meeting at the point of self-intersection counter-clockwise starting with an arc oriented away from the point and denote by $\boldsymbol J_i$ the jump across the $i$-th arc. Clearly, $\boldsymbol W_\pm\in L^2(\Sigma_{\boldsymbol M})\cap L^\infty(\Sigma_{\boldsymbol M})$. To show that $\mathcal{I}-\mathcal{C}_{\boldsymbol W}$ is Fredholm, one needs to construct its pseudoinverse. The latter is given by $\mathcal{I}-\mathcal{C}_{\widetilde{\boldsymbol W}}$, where
\[
\widetilde{\boldsymbol W}_+ := \big(\boldsymbol I + \boldsymbol W_+\big)^{-1} - \boldsymbol I \quad \text{and} \quad \widetilde{\boldsymbol W}_- := \boldsymbol I- \big(\boldsymbol I - \boldsymbol W_-\big)^{-1},
\]
as explained in \cite[Eq. (2.39)--(2.42)]{IKOs08}. The index of $\mathcal{I}-\mathcal{C}_{\boldsymbol W}$ is equal to the winding number of $\det(\boldsymbol J)$, which is zero since $\det(\boldsymbol J)\equiv1$. Finally, the kernel of $\mathcal{I}-\mathcal{C}_{\boldsymbol W}$ is trivial if and only if the homogeneous Riemann-Hilbert problem corresponding to \hyperref[rhmAB]{RHP-$\boldsymbol M_{\alpha,\beta}$} has only trivial solutions. Correspondence between \hyperref[rhpsiAB]{\rhpsiAB} and \hyperref[rhmAB]{RHP-$\boldsymbol M_{\alpha,\beta}$} implies that the kernel is trivial if and only if the solution of \hyperref[rhpsiAB]{\rhpsiAB} with \hyperref[rhpsiAB]{\rhpsiAB}(d) replaced by \eqref{PsiABd} has only trivial solutions. This is precisely the content of the preceding subsection. This finishes the proof of the first claim of Theorem~\ref{thm:localRH}.

\subsection{Asymptotics of \hyperref[rhpsiAB]{\rhpsiAB} for $s>0$}

It is known that $\mathcal{O}\left(\eta^{-1}\right)$ is uniform for $s$ on compact subsets of the real line \cite{IKOs08}. Thus, we only need to prove \eqref{betterestimate1} for $s$ large.

\subsubsection{Renormalized \hyperref[rhpsiAB]{\rhpsiAB}}

Set $\widehat I_\pm:=\{\eta:\arg(\eta+1)=\pm 2\pi/3\}$ and let $\widehat\Omega_j$, $j\in\{1,2,3,4\}$, be the domains comprising $\C\setminus\big((-\infty,\infty)\cup \widehat I_+\cup\widehat I_-\big)$, numbered counter-clockwise and so that $\widehat\Omega_1$ contains the first quadrant. Define
\[
g(\eta) = \frac23(\eta+1)^{3/2}, \quad \eta\in\C\setminus(-\infty,-1],
\]
to be the principal branch and set for convenience $\tau:=s^{3/2}$. Let
\begin{equation}
\label{st1}
 \widehat{\boldsymbol\Psi}_{\alpha,\beta}(\eta;\tau) = s^{\sigma_3/4}\boldsymbol\Psi_{\alpha,\beta}(s\eta;s) \left\{
\begin{array}{lll}
\boldsymbol I & \text{in} & \Omega_1\cup\Omega_4\cup\widehat\Omega_2\cup\widehat\Omega_3, \medskip \\
\left(\begin{matrix} 1 & 0 \\ \pm e^{\pm\alpha\pi\mathrm{i}} & 1 \end{matrix}\right) & \text{in} & \Omega_2\setminus\widehat\Omega_2, \; \Omega_3\setminus\widehat\Omega_3,
\end{array}
\right.
\end{equation}
where the sign $+$ is used in $\Omega_2\setminus\widehat\Omega_2$ and the sign $-$ in $\Omega_3\setminus\widehat\Omega_3$. Then $\widehat{\boldsymbol\Psi}_{\alpha,\beta}$ solves the following Riemann-Hilbert problem (\rhhpsiAB):
\begin{itemize}
\label{rhhpsiAB}
\item[(a)] $\widehat{\boldsymbol\Psi}_{\alpha,\beta}$ is holomorphic in $\C\setminus\big(\widehat I_+\cup \widehat I_-\cup(-\infty,\infty)\big)$;
\item[(b)] $\widehat{\boldsymbol\Psi}_{\alpha,\beta}$ has continuous traces on $\widehat I_+\cup \widehat I_-\cup(-\infty,-1)\cup(-1,0)\cup(0,\infty)$ that satisfy
\[
\widehat{\boldsymbol\Psi}_{\alpha,\beta+} = \widehat{\boldsymbol\Psi}_{\alpha,\beta-}
\left\{
\begin{array}{rll}
\left(\begin{matrix} 0 & 1 \\ -1 & 0 \end{matrix}\right) & \text{on} & (-\infty,-1), \medskip \\
\left(\begin{matrix} e^{\alpha\pi\mathrm{i}} & 1 \\ 0 & e^{-\alpha\pi\mathrm{i}} \end{matrix}\right) & \text{on} & (-1,0), \medskip \\
\left(\begin{matrix} 1 & \beta \\ 0 & 1 \end{matrix}\right) & \text{on} & (0,\infty), \medskip \\
\left(\begin{matrix} 1 & 0 \\ e^{\pm\alpha\pi\mathrm{i}} & 1 \end{matrix}\right) & \text{on} & \widehat I_\pm;
\end{array}
\right.
\]
\item[(c)] as $\eta\to0$  it holds that
\[
\widehat{\boldsymbol\Psi}_{\alpha,\beta}(\eta;\tau) = \widehat{\boldsymbol E}(\eta) \boldsymbol S_{\alpha,\beta}(\eta)\boldsymbol A_j, \quad \eta\in\widehat\Omega_j, \quad j\in\{1,4\},
\]
where $\widehat{\boldsymbol E}$ is holomorphic, and $\boldsymbol S_{\alpha,\beta}$, $\boldsymbol A_1$ and $\boldsymbol A_4$ are the same as in \hyperref[rhpsiAB]{\rhpsiAB}(c);
\item[(d)] $\widehat{\boldsymbol\Psi}_{\alpha,\beta}$ has the following behavior near $\infty$:
\[
\widehat{\boldsymbol\Psi}_{\alpha,\beta}(\eta;\tau) = \left(\boldsymbol I+\mathcal{O}\left(\eta^{-1}\right)\right) \frac{\eta^{-\sigma_3/4}}{\sqrt2} \left(\begin{matrix} 1 & \mathrm{i} \\ \mathrm{i} & 1 \end{matrix}\right)e^{-\tau g(\eta)\sigma_3}
\]
uniformly in $\C\setminus\big(\widehat I_+\cup \widehat I_-\cup(-\infty,\infty)\big)$.
\end{itemize}

\subsubsection{Global Parametrix}

Let
\begin{eqnarray}
\widehat{\boldsymbol\Psi}^{(\infty)}(\eta;\tau) &:=& \left(\begin{matrix} 1 & 0 \\ \alpha\mathrm{i} & 1 \end{matrix}\right)\frac{(\eta+1)^{-\sigma_3/4}}{\sqrt2} \left(\begin{matrix} 1 & \mathrm{i} \\ \mathrm{i} & 1 \end{matrix}\right) \left( \frac{(\eta+1)^{1/2}+1}{(\eta+1)^{1/2}-1} \right)^{-\alpha\sigma_3/2}e^{-\tau g(\eta)\sigma_3} \nonumber \\
& =: & \boldsymbol F^{(\infty)}(\tau)e^{-\tau g(\eta)\sigma_3}. \nonumber
\end{eqnarray}
Then, as is explained in \cite[Section~2.4.1]{IKOs09}, this matrix-valued function solves the following Riemann-Hilbert problem:
\begin{itemize}
\item[(a)] $\widehat{\boldsymbol\Psi}^{(\infty)}$ is holomorphic in $\C\setminus(-\infty,0]$;
\item[(b)] $\widehat{\boldsymbol\Psi}^{(\infty)}$ has continuous traces on $(-\infty,-1)\cup(-1,0)$ that satisfy
\[
\widehat{\boldsymbol\Psi}_+^{(\infty)} = \widehat{\boldsymbol\Psi}_-^{(\infty)}
\left\{
\begin{array}{rll}
\left(\begin{matrix} 0 & 1 \\ -1 & 0 \end{matrix}\right) & \text{on} & (-\infty,-1), \medskip \\
e^{\alpha\pi\mathrm{i}\sigma_3} & \text{on} & (-1,0),
\end{array}
\right.
\]
\item[(c)] as $\eta\to0$  it holds that $\widehat{\boldsymbol\Psi}^{(\infty)}(\eta;\tau) = \widehat{\boldsymbol E}^{(\infty)}(\eta)\eta^{\alpha\sigma_3/2}$, where $\widehat{\boldsymbol E}^{(\infty)}$ is holomorphic and non-vanishing around zero;
\item[(d)] $\widehat{\boldsymbol\Psi}^{(\infty)}$ satisfies \hyperref[rhhpsiAB]{\rhhpsiAB}(d) uniformly in $\C\setminus(-\infty;0]$ and the term $\mathcal{O}\big(\eta^{-1}\big)$ does not depend on $\tau$.
\end{itemize}
Notice that $\boldsymbol F^{(\infty)}$ has the same jumps as $\widehat{\boldsymbol\Psi}^{(\infty)}$.

\subsubsection{Local Parametrix Around $-1$}

The solution $\boldsymbol\Psi_\mathsf{Ai}:=\boldsymbol\Psi_{0,1}(\cdot;0)$ is known explicitly and is constructed with the help of the Airy function and its derivative \cite{DKMLVZ99b}. Set
\[
\widehat{\boldsymbol\Psi}^{(-1)}(\eta;\tau) := \widehat{\boldsymbol E}^{(-1)}(\eta)\boldsymbol\Psi_\mathsf{Ai}\big(s(\eta+1)\big)e^{\pm\alpha\pi\mathrm{i}\sigma_3/2}, \quad \pm\im(\eta)>0,
\]
where $ \widehat{\boldsymbol E}^{(-1)}$ is holomorphic around $-1$ and is given by
\[
 \widehat{\boldsymbol E}^{(-1)}(\eta) := \boldsymbol F^{(\infty)}(\eta) \left( \frac{\big(s(\eta+1)\big)^{-\sigma_3/4}}{\sqrt2}\left(\begin{matrix} 1 & \mathrm{i} \\ \mathrm{i} & 1 \end{matrix}\right) e^{\pm\alpha\pi\mathrm{i}\sigma_3/2} \right)^{-1}, \quad \pm\im(\eta)>0.
\]
Let $U_{-1}$ be the disk of radius $1/4$ centered at $-1$ with boundary oriented counter-clockwise. Then it is shown in \cite[Section~2.4.2]{IKOs09} that $\widehat{\boldsymbol\Psi}^{(-1)}$ satisfies
\begin{itemize}
\item[(a)] $\widehat{\boldsymbol\Psi}^{(-1)}$ is holomorphic in $U_{-1}\setminus\big(\widehat I_+\cup \widehat I_-\cup(-\infty,\infty)\big)$;
\item[(b)] $\widehat{\boldsymbol\Psi}^{(-1)}$ has continuous traces on $U_{-1}\cap\big(\widehat I_+\cup \widehat I_-\cup(-\infty,\infty)\big)$ that satisfy \hyperref[rhhpsiAB]{\rhhpsiAB}(b);
\item[(c)] it holds that
\[
\widehat{\boldsymbol\Psi}^{(-1)}(\eta;\tau) = \boldsymbol F^{(\infty)}(\eta)\left(\boldsymbol I+\mathcal{O}\left(\tau^{-1}\right)\right)e^{-\tau g(\eta)\sigma_3}
\]
as $\tau\to\infty$, uniformly for $\eta\in\partial U_{-1}\setminus\big(\widehat I_+\cup \widehat I_-\cup(-\infty,\infty)\big)$.
\end{itemize}

\subsubsection{Local Parametrix Around $0$}

Define
\[
\widehat{\boldsymbol\Psi}^{(0)}(\eta;\tau) := \widehat{\boldsymbol E}^{(0)}(\eta)\boldsymbol S_{\alpha,\beta}(\tau) \left\{
\begin{array}{ll}
\boldsymbol A_1, & \im(\eta)>0, \medskip \\
\boldsymbol A_4, & \im(\eta)<0,
\end{array}
\right.
\]
where $\boldsymbol S_{\alpha,\beta}$ and $\boldsymbol A_j$ are the same as in \hyperref[rhpsiAB]{\rhpsiAB}(c) and
\[
\widehat{\boldsymbol E}^{(0)}(\eta) := \widehat{\boldsymbol\Psi}^{(\infty)}(\eta;\tau)\eta^{-\alpha\sigma_3/2} \left(\begin{matrix}[\boldsymbol A_1]_{11}^{-1} & 0 \medskip \\ 0 & [\boldsymbol A_1]_{22}^{-1} \end{matrix}\right),
\]
which is a holomorphic function around the origin by the properties of $\widehat{\boldsymbol\Psi}^{(\infty)}$. Let $U_0$ be the disk of radius $1/4$ centered at $0$ with boundary oriented counter-clockwise. Then $\widehat{\boldsymbol\Psi}^{(0)}$ possesses the following properties:
\begin{itemize}
\item[(a)] $\widehat{\boldsymbol\Psi}^{(0)}$ is holomorphic in $U_0\setminus(-1/4,1/4)$;
\item[(b)] $\widehat{\boldsymbol\Psi}^{(0)}$ has continuous traces on $(-1/4,0)\cup(0,1/4)$ that satisfy \hyperref[rhhpsiAB]{\rhhpsiAB}(b);
\item[(c)] $\widehat{\boldsymbol\Psi}^{(0)}$ satisfies \hyperref[rhhpsiAB]{\rhhpsiAB}(c) with $\widehat{\boldsymbol E}$ replaced by $\widehat{\boldsymbol E}^{(0)}$;
\item[(d)] it holds that
\[
\widehat{\boldsymbol\Psi}^{(0)}(\eta;\tau) = \boldsymbol F^{(\infty)}(\eta)\left(\boldsymbol I+\mathcal{O}\left(e^{-c\tau}\right)\right)e^{-\tau g(\eta)\sigma_3}
\]
as $\tau\to\infty$ for some $c>0$, uniformly for $\eta\in\partial U_0\setminus\{-1/4,1/4\}$.
\end{itemize}
Indeed, properties (a,b,c) easily follow from \hyperref[rhhpsiAB]{\rhhpsiAB}(b,c) and the holomorphy of $\widehat{\boldsymbol E}^{(0)}$. To get (d), write $[\boldsymbol S_{\alpha,\beta}]_{12}(\eta)=\eta^{\alpha/2}\kappa(\eta)$, where
\[
\kappa(\eta) = 0, \quad \kappa(\eta) = \frac{1-\beta}{2\pi\mathrm{i}}\log\eta, \quad \text{or} \quad \kappa(\eta) = \frac{1+\beta}{2\pi\mathrm{i}}\log\eta
\]
depending on whether $\alpha$ is not an integer, an even integer, or an odd integer. Recall also that $\boldsymbol A_1$ and $\boldsymbol A_4$ are upper triangular matrices and $[\boldsymbol A_1]_{ii}=[\boldsymbol A_4]_{ii}$ for $i\in\{1,2\}$. Then
\begin{eqnarray}
\widehat{\boldsymbol\Psi}^{(0)}(\eta;\tau) &=& \boldsymbol F^{(\infty)}(\eta)e^{-\tau g(\eta)\sigma_3} \left(\begin{matrix} [\boldsymbol A_j]_{11}^{-1} & 0 \medskip \\ 0 & [\boldsymbol A_j]_{22}^{-1} \end{matrix}\right) \left(\begin{matrix} 1 & \kappa(\eta) \medskip \\ 0 & 1 \end{matrix}\right) \boldsymbol A_j \nonumber \\
&=& \boldsymbol F^{(\infty)}(\eta) \left(\begin{matrix} 1 & e^{-2\tau g(\eta)}\big([\boldsymbol A_j]_{22}\kappa(\eta)+[\boldsymbol A_j]_{12}\big)/[\boldsymbol A_j]_{11} \medskip \\ 0 & 1 \end{matrix}\right) e^{-\tau g(\eta)\sigma_3}, \nonumber
\end{eqnarray}
from which property (d) can be easily deduced as $\tau>0$ and $\re(g(\eta))>0$ for $\eta\in\partial U_0$.

\subsubsection{Asymptotics of \hyperref[rhpsiAB]{\rhpsiAB}}

Denote by
\[
\Sigma(\boldsymbol R_{\alpha,\beta}) := \partial U_{-1} \cup \partial U_0\cup \left(\big(\widehat I_- \cup \widehat I_+\cup (-1,\infty)\big)\cap \left(\C\setminus\big(\overline U_{-1}\cup \overline U_0\big)\right) \right),
\]
and let $\Sigma^\circ(\boldsymbol R_{\alpha,\beta})$ be $\Sigma(\boldsymbol R_{\alpha,\beta})$ with the points of self-intersection removed. Put
\[
\boldsymbol R_{\alpha,\beta}(\eta;\tau) := \widehat{\boldsymbol\Psi}_{\alpha,\beta}(\eta;\tau)\left\{
\begin{array}{ll}
\widehat{\boldsymbol\Psi}^{(-1)}(\eta;\tau)^{-1}, & \eta\in U_{-1}, \medskip \\
\widehat{\boldsymbol\Psi}^{(0)}(\eta;\tau)^{-1}, & \eta\in U_0, \medskip \\
\widehat{\boldsymbol\Psi}^{(\infty)}(\eta;\tau)^{-1}, & \eta\in\C\setminus\big(\overline U_0 \cup \overline U_{-1}\big).
\end{array}
\right.
\]
Then $\boldsymbol R_{\alpha,\beta}$ has the following properties:
\begin{itemize}
\item[(a)] $\boldsymbol R_{\alpha,\beta}$ is holomorphic in $\C\setminus\Sigma(\boldsymbol R_{\alpha,\beta})$;
\item[(b)] $\boldsymbol R_{\alpha,\beta}$ has continuous traces on $\Sigma^\circ(\boldsymbol R_{\alpha,\beta})$ that satisfy $\boldsymbol R_{\alpha,\beta+}^{(0)} := \boldsymbol R_{\alpha,\beta-}^{(0)}\left(\boldsymbol I + \mathcal{O}\left(\tau^{-1}\right) \right)$ as $\tau\to\infty$;
\item[(c)] it holds that $\boldsymbol R_{\alpha,\beta}(\eta;\tau) = \boldsymbol I + \mathcal{O}\left(\eta^{-1}\right)$ as $\eta\to\infty$ uniformly in $\C\setminus\Sigma(\boldsymbol R_{\alpha,\beta})$.
\end{itemize}

Property (a) follows from the facts that $\widehat{\boldsymbol\Psi}^{(e)}$ has the same jumps as $\widehat{\boldsymbol\Psi}_{\alpha,\beta}$ in $U_e$, $e\in\{-1,0\}$, $\widehat{\boldsymbol\Psi}^{(\infty)}$ has the same jump across $(-\infty,-1)$ as $\widehat{\boldsymbol\Psi}_{\alpha,\beta}$, and $\widehat{\boldsymbol\Psi}^{(0)}$ has the same local behavior around $0$ as $\widehat{\boldsymbol\Psi}_{\alpha,\beta}$. Property (c) follows easily from the fact that both $\widehat{\boldsymbol\Psi}^{(\infty)}$ and $\widehat{\boldsymbol\Psi}_{\alpha,\beta}$ satisfy \hyperref[rhhpsiAB]{\rhhpsiAB}(d). Property (b) on $\partial U_e$, $e\in\{-1,0\}$, is the consequence of the fact
\[
\boldsymbol R_{\alpha,\beta-}^{-1}\boldsymbol R_{\alpha,\beta+} = \widehat{\boldsymbol\Psi}^{(\infty)}\widehat{\boldsymbol\Psi}^{(e)-1} = \boldsymbol I + \boldsymbol F^{(\infty)}\mathcal{O}\left(\tau^{-1}\right)\boldsymbol F^{(\infty)-1}.
\]
Finally, on the rest of $\Sigma(\boldsymbol R_{\alpha,\beta})$ it holds that
\[
\boldsymbol R_{\alpha,\beta+} = \boldsymbol R_{\alpha,\beta-}
\left\{
\begin{array}{lll}
\boldsymbol I + \boldsymbol F_-^{(\infty)}\left(\begin{matrix} 0 & e^{-2\tau g} \\ 0 & 0 \end{matrix}\right)\boldsymbol F_+^{(\infty)-1} & \text{on} & (-3/4,-1/4), \medskip \\
\boldsymbol I + \boldsymbol F^{(\infty)}\left(\begin{matrix} 0 & \beta e^{-2\tau g} \\ 0 & 0 \end{matrix}\right)\boldsymbol F^{(\infty)-1} & \text{on} & (1/4,\infty), \medskip \\
\boldsymbol I + \boldsymbol F^{(\infty)}\left(\begin{matrix} 0 & 0 \\ e^{\pm\alpha\pi\mathrm{i}}e^{2\tau g} & 0 \end{matrix}\right)\boldsymbol F^{(\infty)-1} & \text{on} & \widehat I_\pm\setminus \overline U_{-1}.
\end{array}
\right.
\]
As $g(\eta)>0$ for $\eta\in(-1,\infty)$ and $g(\eta)<0$ for $\eta\in\widehat I_\pm$, the last part of the property (b) follows. Given (a,b,c) it is by now standard to conclude that
\[
\boldsymbol R_{\alpha,\beta}(\eta;\tau) = \boldsymbol I + \mathcal{O}\left(\frac1{\tau(1+|\eta|)}\right)
\]
as $\tau\to\infty$ uniformly for $\eta\in\C\setminus\Sigma(\boldsymbol R_{\alpha,\beta})$. Thus,
\begin{eqnarray}
\widehat{\boldsymbol\Psi}_{\alpha,\beta}(\eta;\tau) &=& \frac{\eta^{-\sigma_3/4}}{\sqrt2} \left(\boldsymbol I + \mathcal{O}\left(\frac1{\tau\sqrt{1+|\eta|}}\right) \right) \left( \boldsymbol I + \mathcal{O}\left(\eta^{-1/2}\right) \right) \left(\begin{matrix} 1 & \mathrm{i} \\ \mathrm{i} & 1 \end{matrix}\right)e^{-\tau g(\eta)\sigma_3} \nonumber \\
\label{RAB}
&=& \frac{\eta^{-\sigma_3/4}}{\sqrt2} \left( \boldsymbol I + \mathcal{O}\left(\eta^{-1/2}\right) \right) \left(\begin{matrix} 1 & \mathrm{i} \\ \mathrm{i} & 1 \end{matrix}\right)e^{-\tau g(\eta)\sigma_3} 
\end{eqnarray}
as $\eta\to\infty$ uniformly for $\eta\in\C\setminus\Sigma(\boldsymbol R_{\alpha,\beta})$ and $\tau$ large. Estimate \eqref{betterestimate1} now follows from \eqref{st1}.

\subsection{Asymptotics of \hyperref[rhpsiAB]{\rhpsiAB} for $s<0$}

In this section we assume that $\beta\neq0$ and define
\[
\log\beta=\log|\beta|+\mathrm{i}\arg(\beta), \quad \arg(\beta)\in(-\pi,\pi).
\]
Again, we only need to prove \eqref{betterestimate1} when  $s\to-\infty$.

\subsubsection{Renormalized \hyperref[rhpsiAB]{\rhpsiAB}}

Set $\widehat J_\pm$ to be two Jordan arcs connecting $0$ and $1$, oriented from $0$ to $1$, and lying in the first $(+)$ and the fourth ($-$) quadrants.  Denote further by $\Omega_\pm$ the domains delimited by $\widehat J_\pm$ and $[0,1]$. Define
\[
g(\eta) = \frac23(\eta-1)^{3/2}, \quad \eta\in\C\setminus(-\infty,1],
\]
to be the principal branch and set for convenience $\tau:=(-s)^{3/2}$. Let
\begin{equation}
\label{st2}
 \widehat{\boldsymbol\Psi}_{\alpha,\beta}(\eta;\tau) = (-s)^{\sigma_3/4}\boldsymbol\Psi_{\alpha,\beta}(-s\eta;s) \left\{
\begin{array}{lr}
\left(\begin{matrix} 1 & 0 \\ \mp 1/\beta & 1 \end{matrix}\right) & \text{in} \quad \Omega_\pm, \medskip \\
\boldsymbol I & \text{otherwise}.
\end{array}
\right.
\end{equation}
Put for brevity $\Sigma(\widehat{\boldsymbol\Psi}_{\alpha,\beta}) := I_+\cup I_-\cup(-\infty,\infty) \cup \widehat J_+ \cup \widehat J_-$. Then $\widehat{\boldsymbol\Psi}_{\alpha,\beta}$ solves the following Riemann-Hilbert problem (\rhhpsiAB):
\begin{itemize}
\label{rhhpsiAB2}
\item[(a)] $\widehat{\boldsymbol\Psi}_{\alpha,\beta}$ is holomorphic in $\C\setminus\Sigma(\widehat{\boldsymbol\Psi}_{\alpha,\beta})$;
\item[(b)] $\widehat{\boldsymbol\Psi}_{\alpha,\beta}$ has continuous traces on $\Sigma(\widehat{\boldsymbol\Psi}_{\alpha,\beta})\setminus\{0,1\}$ that satisfy
\[
\widehat{\boldsymbol\Psi}_{\alpha,\beta+} = \widehat{\boldsymbol\Psi}_{\alpha,\beta-}
\left\{
\begin{array}{rll}
\left(\begin{matrix} 0 & 1 \\ -1 & 0 \end{matrix}\right) & \text{on} & (-\infty,0), \medskip \\
\left(\begin{matrix} 0 & \beta \\ -1/\beta & 0 \end{matrix}\right) & \text{on} & (0,1), \medskip \\
\left(\begin{matrix} 1 & \beta \\ 0 & 1 \end{matrix}\right) & \text{on} & (1,\infty), \medskip \\
\end{array}
\right.
\]
and
\[
\widehat{\boldsymbol\Psi}_{\alpha,\beta+} = \widehat{\boldsymbol\Psi}_{\alpha,\beta-}
\left\{
\begin{array}{rll}
\left(\begin{matrix} 1 & 0 \\ 1/\beta & 1 \end{matrix}\right) & \text{on} & \widehat J_\pm, \medskip \\
\left(\begin{matrix} 1 & 0 \\ e^{\pm\alpha\pi\mathrm{i}} & 1 \end{matrix}\right) & \text{on} & I_\pm;
\end{array}
\right.
\]
\item[(c)] as $\eta\to0$  it holds that
\[
\widehat{\boldsymbol\Psi}_{\alpha,\beta}(\eta;\tau) = \mathcal{O}\left( \begin{matrix} |\zeta|^{\alpha/2} & |\zeta|^{\alpha/2} + |\zeta|^{-\alpha/2} \\ |\zeta|^{\alpha/2} & |\zeta|^{\alpha/2} + |\zeta|^{-\alpha/2} \end{matrix} \right) \quad \text{and} \quad \widehat{\boldsymbol\Psi}_{\alpha,\beta}(\eta;\tau) = \mathcal{O}\left( \begin{matrix} 1 & \log|\zeta| \\ 1 & \log|\zeta| \end{matrix} \right) 
\]
when $\alpha\neq 0$ and $\alpha=0$, respectively;
\item[(d)] $\widehat{\boldsymbol\Psi}_{\alpha,\beta}$ has the following behavior near $\infty$:
\[
\widehat{\boldsymbol\Psi}_{\alpha,\beta}(\eta;\tau) = \left(\boldsymbol I+\mathcal{O}\left(\eta^{-1}\right)\right) \frac{\eta^{-\sigma_3/4}}{\sqrt2} \left(\begin{matrix} 1 & \mathrm{i} \\ \mathrm{i} & 1 \end{matrix}\right)e^{-\tau g(\eta)\sigma_3}
\]
uniformly in $\C\setminus\big( I_+\cup I_-\cup(-\infty,\infty)\big)$.
\end{itemize}

\subsubsection{Global Parametrix}

Set
\[
\widehat{\boldsymbol\Psi}^{(\infty)}(\eta;\tau):=\boldsymbol F^{(\infty)}(\eta)e^{-\tau g(\eta)\sigma_3},
\]
where
\[
\boldsymbol F^{(\infty)}(\eta) := \left( \begin{matrix} 1 & 0 \\ -\frac1{\pi\mathrm{i}}\log\beta & 1 \end{matrix} \right) \frac{(\eta-1)^{-\sigma_3/4}}{\sqrt2} \left(\begin{matrix} 1 & \mathrm{i} \\ \mathrm{i} & 1 \end{matrix}\right) F_\beta^{-\sigma_3}(\eta)
\]
and the function $F_\beta$ is give by \eqref{Fbeta}. Now, it is a straightforward verification to see that
\begin{itemize}
\item[(a)] $\widehat{\boldsymbol\Psi}^{(\infty)}$ is holomorphic in $\C\setminus(-\infty,1]$;
\item[(b)] $\widehat{\boldsymbol\Psi}^{(\infty)}$ has continuous traces on $(-\infty,1)$ that satisfy
\[
\widehat{\boldsymbol\Psi}_+^{(\infty)} = \widehat{\boldsymbol\Psi}_-^{(\infty)} \left\{
\begin{array}{rll}
\left(\begin{matrix} 0 & 1 \\ -1 & 0 \end{matrix}\right) & \text{on} & (-\infty,0), \medskip \\
\left(\begin{matrix} 0 & \beta \\ -1/\beta & 0 \end{matrix}\right) & \text{on} & (0,1);
\end{array}
\right.
\]
\item[(c)] $\widehat{\boldsymbol\Psi}^{(\infty)}$ satisfies \hyperref[rhhpsiAB]{\rhhpsiAB}(d) uniformly in $\C\setminus(-\infty;1]$ and the term $\mathcal{O}\big(\eta^{-1}\big)$ does not depend on $\tau$.
\end{itemize}

Again, notice that $\widehat{\boldsymbol\Psi}^{(\infty)}$ and $\boldsymbol F^{(\infty)}$ satisfy the same jump relations.

\subsubsection{Local Parametrix Around $1$}

Denote by $U_1$ the disk centered at $1$ of radius $1/4$ with boundary oriented counter-clockwise. Choose arcs $\widehat J_\pm$ so that
\[
\big\{\eta-1:~\eta\in \widehat J_\pm \cap U_1\big\} \subset I_\pm.
\]
Let, as before, $\boldsymbol\Psi_\mathsf{Ai}=\boldsymbol\Psi_{0,1}(\cdot;0)$. Set
\[
\widehat{\boldsymbol\Psi}^{(1)}(\eta;\tau) := \widehat{\boldsymbol E}^{(1)}(\eta)\boldsymbol\Psi_\mathsf{Ai}\big(-s(\eta-1)\big)\beta^{-\sigma_3/2},
\]
where $ \widehat{\boldsymbol E}^{(1)}$ is holomorphic around $1$ and is given by
\[
\widehat{\boldsymbol E}^{(1)}(\eta) := \boldsymbol F^{(\infty)}(\eta) \left(\frac{\big(-s(\eta-1)\big)^{-\sigma_3/4}}{\sqrt2} \left(\begin{matrix} 1 & \mathrm{i} \\ \mathrm{i} & 1 \end{matrix}\right) \beta^{-\sigma_3/2}  \right)^{-1}.
\]
Then it can be checked that $\widehat{\boldsymbol\Psi}^{(1)}$ satisfies
\begin{itemize}
\item[(a)] $\widehat{\boldsymbol\Psi}^{(1)}$ is holomorphic in $U_1\setminus\Sigma(\widehat{\boldsymbol\Psi}_{\alpha,\beta})$;
\item[(b)] $\widehat{\boldsymbol\Psi}^{(1)}$ has continuous traces on $U_1\cap\Sigma(\widehat{\boldsymbol\Psi}_{\alpha,\beta})$ that satisfy \hyperref[rhhpsiAB2]{\rhhpsiAB}(b);
\item[(c)] it holds that
\[
\widehat{\boldsymbol\Psi}^{(1)}(\eta;\tau) = \boldsymbol F^{(\infty)}(\eta)\left(\boldsymbol I+\mathcal{O}\left(\tau^{-1}\right)\right)e^{-\tau g(\eta)\sigma_3}
\]
as $\tau\to\infty$, uniformly for $\eta\in\partial U_1\setminus\Sigma(\widehat{\boldsymbol\Psi}_{\alpha,\beta})$.
\end{itemize}

\subsubsection{Local Parametrix Around $0$}

Denote by $U_0$ the disk centered at $0$ of radius $1/4$ whose boundary oriented counter-clockwise. Let
\[
m(\eta) := 3 \mp 2\mathrm{i}g(\eta), \quad \pm\im(\eta)>0.
\]
Then $m$ is conformal in $U_0$, $m(0)=0$, and $m(x)>0$ for $x\in(0,1/4)$. Choose the arcs $\widehat J_\pm$ so that $m\big(\widehat J_\pm\big) \subset J_\pm$. Define
\[
\widehat{\boldsymbol\Psi}^{(0)}(\eta;\tau) := \widehat{\boldsymbol E}^{(0)}(\eta)\mathcal{D}\left( \boldsymbol\Phi_{\alpha,\beta}\big(\tau m(\eta)\big)\right),
\]
where $\boldsymbol\Phi_{\alpha,\beta}$ is the solution of \hyperref[rhphiAB]{\rhphiAB}, $\mathcal{D}\left( \boldsymbol\Phi_{\alpha,\beta}\big(\tau m\big)\right)$ is a holomorphic deformation of $\boldsymbol\Phi_{\alpha,\beta}\big(\tau m\big)$ that moves the jumps from $(\tau m)^{-1}\big(I_\pm\big)$  to $I_\pm$, and $\widehat{\boldsymbol E}^{(0)}$ is holomorphic around $0$ and is given by
\begin{equation}
\label{E02}
\widehat{\boldsymbol E}^{(0)}(\eta) := \boldsymbol F^{(\infty)}(\eta) \left( e^{-3\tau\mathrm{i}\sigma_3/2}\big(\mathrm{i}\tau m(\eta)\big)^{\log\beta\sigma_3/2\pi\mathrm{i}} \boldsymbol B_\pm\right)^{-1} 
\end{equation}
(the constant matrices $\boldsymbol B_\pm$ were also defined in \hyperref[rhphiAB]{\rhphiAB}). To see that $E^{(0)}$ is indeed holomorphic recall that
\[
\boldsymbol B_+ = \boldsymbol B_-\left(\begin{matrix} 0 & 1 \\ -1 & 0\end{matrix}\right) \quad \text{and} \quad \big(\mathrm{i}x\big)_-^{\log\beta/2\pi\mathrm{i}} = \beta\big(\mathrm{i}x\big)_+^{\log\beta/2\pi\mathrm{i}}
\]
for $x>0$, which implies that the function in parenthesis in \eqref{E02} has the same jump as $\boldsymbol F^{(\infty)}$ on $(-1/4,1/4)$. Observe further that
\[
\boldsymbol B_\pm e^{\mp\mathrm{i}\tau m(\eta)\sigma_3/2} = e^{3\tau\mathrm{i}\sigma_3/2}\boldsymbol B_\pm e^{-\tau g(\eta)\sigma_3}, \quad \pm\im(\eta)>0.
\]
Therefore, it follows from \hyperref[rhphiAB]{\rhphiAB}(d) that
\begin{multline*}
\widehat{\boldsymbol\Psi}^{(0)}(\eta;\tau) = \boldsymbol F^{(\infty)}(\eta) \left( e^{-3\tau\mathrm{i}\sigma_3/2}\big(\mathrm{i}\tau m(\eta)\big)^{\log\beta\sigma_3/2\pi\mathrm{i}} \boldsymbol B_\pm\right)^{-1} \left(\boldsymbol I + \mathcal{O}\left(\tau^{-1}\right)\right)\times \\
\times  \left( e^{-3\tau\mathrm{i}\sigma_3/2}\big(\mathrm{i}\tau m(\eta)\big)^{\log\beta\sigma_3/2\pi\mathrm{i}} \boldsymbol B_\pm\right)e^{-\tau g(\eta)\sigma_3}.
\end{multline*}
Finally, notice that
\[
\left|\tau^{\log\beta/2\pi\mathrm{i}}\right| = \tau^{\arg(\beta)/2\pi}, \quad \arg(\beta)\in(-\pi,\pi).
\]
Thus, $\widehat{\boldsymbol\Psi}^{(0)}$ has the following properties:
\begin{itemize}
\item[(a)] $\widehat{\boldsymbol\Psi}^{(0)}$ is holomorphic in $U_0\setminus\Sigma(\widehat{\boldsymbol\Psi}_{\alpha,\beta})$;
\item[(b)] $\widehat{\boldsymbol\Psi}^{(0)}$ satisfies \hyperref[rhhpsiAB2]{\rhhpsiAB}(b) on $\Sigma(\widehat{\boldsymbol\Psi}_{\alpha,\beta})\cap U_0$;
\item[(c)] $\widehat{\boldsymbol\Psi}^{(0)}$ satisfies \hyperref[rhhpsiAB2]{\rhhpsiAB}(c) within $U_0$ (by \hyperref[rhphiAB]{\rhphiAB}(c));
\item[(d)] it holds that
\[
\widehat{\boldsymbol\Psi}^{(0)}(\eta;\tau) = \boldsymbol F^{(\infty)}(\eta)\left(\boldsymbol I + \mathcal{O}\left(\tau^{\arg(\beta)/\pi-1}\right)\right)e^{-\tau g(\eta)\sigma_3}
\]
as $\tau\to\infty$ uniformly on $\partial U_0\setminus\Sigma(\widehat{\boldsymbol\Psi}_{\alpha,\beta})$.
\end{itemize}

\subsubsection{Asymptotics of \hyperref[rhpsiAB]{\rhpsiAB}}

Define
\[
\boldsymbol R_{\alpha,\beta}(\eta;\tau) := \widehat{\boldsymbol\Psi}_{\alpha,\beta}(\eta;\tau)\left\{
\begin{array}{ll}
\widehat{\boldsymbol\Psi}^{(0)}(\eta;\tau)^{-1}, & \eta\in U_0, \medskip \\
\widehat{\boldsymbol\Psi}^{(1)}(\eta;\tau)^{-1}, & \eta\in U_1, \medskip \\
\widehat{\boldsymbol\Psi}^{(\infty)}(\eta;\tau)^{-1}, & \eta\in\C\setminus\big(\overline U_0 \cup \overline U_1\big).
\end{array}
\right.
\]
Notice that the jumps of $\boldsymbol R_{\alpha,\beta}$ across $\widehat J_\pm\setminus \big(\overline U_0 \cup \overline U_1\big)$ are equal to
\[
\boldsymbol I + \boldsymbol F^{(\infty)-1}\left( \begin{matrix} 0 & 0 \\ e^{2\tau g} & 0 \end{matrix} \right)\boldsymbol F^{(\infty)}.
\]
Since $\re(g)<0$ there, we get exactly as in the case $s>0$ that
\[
\boldsymbol R_{\alpha,\beta}(\eta;\tau) = \boldsymbol I + \mathcal{O}\left(\frac1{\tau^{1-\arg(\beta)/\pi}(1+|\eta|)}\right)
\]
as $\tau\to\infty$ uniformly for $\eta\in\C\setminus\left(\partial U_0 \cup \partial U_1\cup \left(\Sigma(\widehat{\boldsymbol\Psi}_{\alpha,\beta}) \setminus \big(\overline U_0 \cup \overline U_1\big)\right)\right)$. Hence, \eqref{RAB} still holds and therefore \eqref{betterestimate1} follows from \eqref{st2}.

\subsection{Asymptotics of \hyperref[rhwpsiAB]{\rhwpsiAB}}

Below, we assume that $\beta=0$. As before, we only need to prove \eqref{betterestimate2} when  $s\to-\infty$.

\subsubsection{Renormalized \hyperref[rhwpsiAB]{\rhwpsiAB}}

Define
\[
g(\eta) = \frac23\eta^{1/2}(\eta-1), \quad \eta\in\C\setminus(-\infty,1],
\]
to be the principal branch and set for convenience $\tau:=(-s)^{3/2}$. Let
\begin{equation}
\label{st3}
 \widehat{\boldsymbol\Psi}_\alpha(\eta;\tau) = (-s)^{\sigma_3/4}\widetilde{\boldsymbol\Psi}_{\alpha,0}(-s\eta;s).
\end{equation}
Then $\widehat{\boldsymbol\Psi}_\alpha$ solves the following Riemann-Hilbert problem (\rhhpsiAB):
\begin{itemize}
\label{rhhpsiA}
\item[(a)] $\widehat{\boldsymbol\Psi}_\alpha$ is holomorphic in $\C\setminus\big(I_+\cup I_-\cup(-\infty,0]\big)$;
\item[(b)] $\widehat{\boldsymbol\Psi}_\alpha$ has continuous traces on $I_+\cup I_-\cup(-\infty,0)$ that satisfy
\[
\widehat{\boldsymbol\Psi}_{\alpha+} = \widehat{\boldsymbol\Psi}_{\alpha-}
\left\{
\begin{array}{rll}
\left(\begin{matrix} 0 & 1 \\ -1 & 0 \end{matrix}\right) & \text{on} & (-\infty,0), \medskip \\
\left(\begin{matrix} 1 & 0 \\ e^{\pm\alpha\pi\mathrm{i}} & 1 \end{matrix}\right) & \text{on} & I_\pm;
\end{array}
\right.
\]
\item[(c)] as $\eta\to0$  it holds that
\[
\widehat{\boldsymbol\Psi}_\alpha(\eta;\tau) = \mathcal{O}\left( \begin{matrix} |\zeta|^{\alpha/2} & |\zeta|^{\alpha/2} + |\zeta|^{-\alpha/2} \\ |\zeta|^{\alpha/2} & |\zeta|^{\alpha/2} + |\zeta|^{-\alpha/2} \end{matrix} \right) \quad \text{and} \quad \widehat{\boldsymbol\Psi}_\alpha(\eta;\tau) = \mathcal{O}\left( \begin{matrix} 1 & \log|\zeta| \\ 1 & \log|\zeta| \end{matrix} \right) 
\]
when $\alpha\neq 0$ and $\alpha=0$, respectively;
\item[(d)] $\widehat{\boldsymbol\Psi}_\alpha$ has the following behavior near $\infty$:
\[
\widehat{\boldsymbol\Psi}_\alpha(\eta;\tau) = \left(\boldsymbol I+\mathcal{O}\left(\eta^{-1}\right)\right) \frac{\eta^{-\sigma_3/4}}{\sqrt2} \left(\begin{matrix} 1 & \mathrm{i} \\ \mathrm{i} & 1 \end{matrix}\right)e^{-\tau g(\eta)\sigma_3}
\]
uniformly in $\C\setminus\big( I_+\cup I_-\cup(-\infty,\infty)\big)$.
\end{itemize}

\subsubsection{Global Parametrix}

Set
\[
\widehat{\boldsymbol\Psi}^{(\infty)}(\eta;\tau):= \frac{\eta^{-\sigma_3/4}}{\sqrt2} \left(\begin{matrix} 1 & \mathrm{i} \\ \mathrm{i} & 1 \end{matrix}\right)e^{-\tau g(\eta)\sigma_3} =: \boldsymbol F^{(\infty)}(\eta)e^{-\tau g(\eta)\sigma_3}.
\]
It is a straightforward verification to see that
\begin{itemize}
\item[(a)] $\widehat{\boldsymbol\Psi}^{(\infty)}$ is holomorphic in $\C\setminus(-\infty,0]$;
\item[(b)] $\widehat{\boldsymbol\Psi}^{(\infty)}$ has continuous traces on $(-\infty,0)$ that satisfy $\widehat{\boldsymbol\Psi}_+^{(\infty)} = \widehat{\boldsymbol\Psi}_-^{(\infty)}\left(\begin{matrix} 0 & 1 \\ -1 & 0 \end{matrix}\right)$;
\item[(c)] $\widehat{\boldsymbol\Psi}^{(\infty)}$ satisfies \hyperref[rhhpsiA]{\rhhpsiA}(d) with $\mathcal{O}\big(\eta^{-1}\big)\equiv0$.
\end{itemize}

\subsubsection{Local Parametrix Around $0$}

Denote by $U_0$ the disk centered at $0$ of small enough radius so that $g^2(\eta)$ is conformal in $U_0$. Notice that $g^2(x)>0$ for $\{x>0\}\cap U_0$. Define
\[
\widehat{\boldsymbol\Psi}^{(0)}(\eta;\tau) := \widehat{\boldsymbol E}^{(0)}(\eta)\mathcal{D}\left( \boldsymbol\Psi_\alpha \big( (\tau g(\eta)/2)^2\big)\right),
\]
where $\boldsymbol\Psi_\alpha$ is the solution of \hyperref[rhpsiA]{\rhpsiA}, $\mathcal{D}\left( \boldsymbol\Psi_\alpha\big( (\tau g/2)^2\big)\right)$ is a holomorphic deformation of $\boldsymbol\Psi_\alpha\big( (\tau g/2)^2\big)$ that moves the jumps from $(\tau^2 g^2/4)^{-1}\big(I_\pm\big)$  to $I_\pm$, and $\widehat{\boldsymbol E}^{(0)}$ is holomorphic around $0$ and is given by
\[
\widehat{\boldsymbol E}^{(0)}(\eta) := \boldsymbol F^{(\infty)}(\eta)\mathcal{D}\left( \boldsymbol F^{(\infty)-1}\big( (\tau g/2)^2\big) \right).
\]
Clearly, $\widehat{\boldsymbol\Psi}^{(0)}$ has the following properties:
\begin{itemize}
\item[(a)] $\widehat{\boldsymbol\Psi}^{(0)}$ is holomorphic in $U_0\setminus\big(I_+\cup I_-\cup(-\infty,\infty)\big)$;
\item[(b)] $\widehat{\boldsymbol\Psi}^{(0)}$ satisfies \hyperref[rhhpsiA]{\rhhpsiA}(b) on $\big(I_+\cup I_-\cup(-\infty,\infty)\big)\cap U_0$;
\item[(c)] $\widehat{\boldsymbol\Psi}^{(0)}$ satisfies \hyperref[rhhpsiA]{\rhhpsiA}(c) within $U_0$ (by \hyperref[rhpsiA]{\rhpsiA}(c));
\item[(d)] it holds that
\[
\widehat{\boldsymbol\Psi}^{(0)}(\eta;\tau) = \boldsymbol F^{(\infty)}(\eta)\left(\boldsymbol I + \mathcal{O}\left(\tau^{-1}\right)\right)e^{-\tau g(\eta)\sigma_3}
\]
as $\tau\to\infty$ uniformly on $\partial U_0\setminus\big(I_+\cup I_-\cup(-\infty,\infty)\big)$.
\end{itemize}

\subsubsection{Asymptotics of \hyperref[rhwpsiAB]{\rhwpsiAB}}

Define
\[
\boldsymbol R_\alpha(\eta;\tau) := \widehat{\boldsymbol\Psi}_\alpha(\eta;\tau)\left\{
\begin{array}{ll}
\widehat{\boldsymbol\Psi}^{(0)}(\eta;\tau)^{-1}, & \eta\in U_0, \medskip \\
\widehat{\boldsymbol\Psi}^{(\infty)}(\eta;\tau)^{-1}, & \eta\in\C\setminus \overline U_0.
\end{array}
\right.
\]
Exactly as before we have that
\[
\boldsymbol R_\alpha(\eta;\tau) = \boldsymbol I + \mathcal{O}\left(\frac1{\tau(1+|\eta|)}\right)
\]
as $\tau\to\infty$ uniformly for $\eta\in\C\setminus\left(\partial U_0 \cup \left( \big( I_+ \cup I_- \cup (-\infty,\infty) \big)\setminus \overline U_0 \right)\right)$. Hence, \eqref{betterestimate2} follows from \eqref{st3}.

\bibliographystyle{plain}

\end{document}